\documentclass[a4paper]{amsart}
\usepackage{amsthm,amsfonts,amsmath,amssymb}
\usepackage[abs]{overpic}

\newtheorem{theorem}{Theorem}[section]
\newtheorem{lemma}[theorem]{Lemma}

\newtheorem{proposition}[theorem]{Proposition}

\theoremstyle{definition}

\theoremstyle{remark}

\newcommand{\e}{\varepsilon}

\newcommand{\Z}{\mathbb{Z}}

\makeatletter

\@addtoreset{figure}{section}
\makeatother

\makeatletter
  
  \@addtoreset{equation}{section}
\makeatother

\setcounter{topnumber}{100}
\setcounter{bottomnumber}{100}
\setcounter{totalnumber}{100}

\begin{document}
\title[Virtualized Delta, sharp, and pass moves]{Virtualized Delta, sharp, and pass moves for oriented virtual knots and links}

\author{Takuji NAKAMURA}
\address{Faculty of Education, 
University of Yamanashi,
Takeda 4-4-37, Kofu, Yamanashi, 400-8510, Japan}
\email{takunakamura@yamanashi.ac.jp}

\author{Yasutaka NAKANISHI}
\address{Department of Mathematics, Kobe University, 
Rokkodai-cho 1-1, Nada-ku, Kobe 657-8501, Japan}
\email{nakanisi@math.kobe-u.ac.jp}

\author{Shin SATOH}
\address{Department of Mathematics, Kobe University, 
Rokkodai-cho 1-1, Nada-ku, Kobe 657-8501, Japan}
\email{shin@math.kobe-u.ac.jp}

\author[Kodai Wada]{Kodai Wada}
\address{Department of Mathematics, Kobe University, Rokkodai-cho 1-1, Nada-ku, Kobe 657-8501, Japan}
\email{wada@math.kobe-u.ac.jp}

\makeatletter
\@namedef{subjclassname@2020}{%
  \textup{2020} Mathematics Subject Classification}
\makeatother
\subjclass[2020]{57K12, 57K10}

\keywords{virtual knot, virtual link, virtualized $\Delta$-move, virtualized $\sharp$-move, virtualized pass-move, odd writhe, $n$-writhe}

\thanks{This work was supported by JSPS KAKENHI Grant Numbers 
JP20K03621, JP19K03492, JP22K03287, and JP23K12973.}



\begin{abstract}
We study virtualized Delta, sharp, and pass moves 
for oriented virtual links, 
and give necessary and sufficient conditions 
for two oriented virtual links to be related by 
the local moves. 
In particular, they are unknotting operations 
for oriented virtual knots. 
We provide lower bounds for the unknotting numbers 
and prove that they are best possible. 
\end{abstract}

\maketitle

\section{Introduction} 
A local move is one of the main tools in classical knot theory 
which studies a relationship between 
topological and algebraic structures of classical knots and links in the $3$-sphere. 
For example, 
the $\Delta$-move corresponds to the set of linking numbers of classical links; 
that is, two classical links are related by a finite sequence of $\Delta$-moves 
if and only if they have the same pairwise linking numbers. 
In particular, 
the $\Delta$-move is an unknotting operation for classical knots. 

On the other hand, it is known that the $\Delta$-move is 
not an unknotting operation for virtual knots (cf.~\cite{ST}). 
In our previous paper~\cite{NNSW}, 
we introduced a more elemental move 
called a virtualized $\Delta$-move (or a $v\Delta$-move simply) 
for \textit{unoriented} virtual knots and links 
such that an ordinal $\Delta$-move is decomposed into 
a pair of virtualized $\Delta$-moves. 
See Figure~\ref{vDelta}.  
It has been shown in~\cite{NNSW} that the virtualized $\Delta$-move 
is an unknotting operation for unoriented virtual knots, 
and corresponds to the set of invariants called the parities 
for unoriented virtual links. 

\begin{figure}[htbp]
  \centering
    \begin{overpic}[width=8cm]{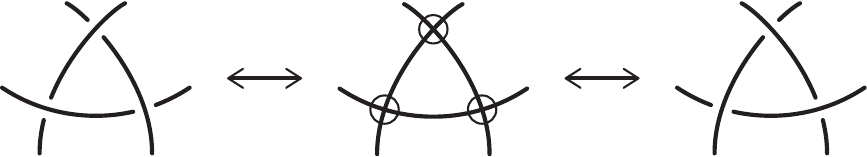}
      \put(63,26){$v\Delta$}
      \put(152,26){$v\Delta$}
    \end{overpic}
  \caption{A virtualized $\Delta$-move for an oriented virtual knot or link}
  \label{vDelta}
\end{figure}

In this paper, we study virtualized $\Delta$-moves 
for \textit{oriented} virtual knots and links, 
which are divided into two classes called 
$v\Delta^{\wedge}$-moves and $v\Delta^{\circ}$-moves 
according to the orientations of the strings involved in the moves. 
Furthermore we introduce a virtualized $\sharp$-move 
(or a $v\sharp$-move) and a virtualized pass-move 
(or a $vp$-move) as elemental versions of 
an ordinal $\sharp$-move and an ordinal pass-move, respectively.

For $X\in\{v\Delta, v\Delta^{\wedge}, v\Delta^{\circ}, v\sharp, vp\}$, 
we say that two oriented virtual links are \textit{$X$-equivalent} 
if they are related by a finite sequence of $X$-moves. 
Then we will prove the following 
by using the $i$th parity $p_i(L)\in{\Z}/2{\Z}$ and 
$i$th intersection number $\lambda_i(L)\in{\Z}$ 
of an oriented $n$-component virtual link $L$ $(i=1,\dots,n)$, 
which are invariants coming from the linking numbers of $L$.

\begin{theorem}\label{thm11}
Let $L$ and $L'$ be oriented $n$-component 
virtual links with $n\geq 2$. 
Then the following are equivalent. 
\begin{enumerate}
\item
$L$ and $L'$ are $v\Delta$-equivalent. 

\item
$L$ and $L'$ are $v\Delta^\wedge$-equivalent. 

\item
$L$ and $L'$ are $v\sharp$-equivalent. 

\item
$p_i(L)=p_i(L')$ holds for any $i=1,\dots,n$. 
\end{enumerate}
\end{theorem}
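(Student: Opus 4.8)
Since the four conditions are claimed equivalent, the scheme is to prove the trivial inclusion $(ii)\Rightarrow(i)$, the invariance implications $(i)\Rightarrow(iv)$ and $(iii)\Rightarrow(iv)$, and the realization implications $(iv)\Rightarrow(ii)$ and $(iv)\Rightarrow(iii)$; together with $(ii)\Rightarrow(i)$ these close the loop. The implication $(ii)\Rightarrow(i)$ is immediate, because a $v\Delta^{\wedge}$-move is by definition a $v\Delta$-move.

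For $(i)\Rightarrow(iv)$ and $(iii)\Rightarrow(iv)$ I would show that a single $v\Delta$-move and a single $v\sharp$-move each preserve every parity. Fixing diagrams of $L$ and $L'$, such a move is supported in a disk meeting the diagram in three arcs and so changes only finitely many classical crossings; grouping the created and destroyed crossings by the ordered pair of components they join and summing signs, one finds that each linking number $\mathrm{lk}(L_a,L_b)$ changes by an even integer, so every $p_i$ --- a fixed combination of the linking numbers of $L$ taken modulo $2$ --- is unchanged. If these invariance statements are already recorded in an earlier section I would simply cite them; otherwise this computation is short, the only care being needed when the three arcs lie on two or three distinct components and several linking numbers move at once.

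The substance of the theorem is $(iv)\Rightarrow(ii)$ and $(iv)\Rightarrow(iii)$, and this is where the hypothesis $n\ge 2$ is essential. The plan is a reduction to normal form: using $v\Delta^{\wedge}$-moves one carries an arbitrary oriented $n$-component virtual link $L$ to a standard link $\mathcal{O}(p_1(L),\dots,p_n(L))$ depending only on the parities. The crucial lemma is that a self-crossing change, and a self-virtualization, on any component can be realized by $v\Delta^{\wedge}$-moves: the third strand demanded by such a move is supplied by a short arc of a neighbouring component $K_j$, inserted so that all three strands are compatibly oriented in the $\wedge$-pattern. Such an insertion always exists when $n\ge 2$ --- one runs the arc of $K_j$ in whichever of its two directions is needed --- and it is exactly this freedom that a knot lacks. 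With these moves in hand, each component is trivialized, the components are disentangled from one another down to some standard mutual linking, and this linking is finally absorbed by further $v\Delta^{\wedge}$-moves until only the parities $p_i$ remain. Applying this to both $L$ and $L'$ and using $p_i(L)=p_i(L')$ gives $L\sim_{v\Delta^{\wedge}}L'$. The identical scheme with $v\sharp$-moves in place of $v\Delta^{\wedge}$-moves yields $(iv)\Rightarrow(iii)$; alternatively, one verifies that a $v\sharp$-move and a $v\Delta^{\wedge}$-move realize each other with the aid of an auxiliary component, which gives $(ii)\Leftrightarrow(iii)$ directly.

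I expect the main obstacle to be the normal-form lemma, and within it the explicit diagrammatic verification that the orientation-restricted move $v\Delta^{\wedge}$ alone --- not the unrestricted $v\Delta$-move and not the mixed move $v\Delta^{\circ}$ --- already performs every reduction step, and in particular that the auxiliary-strand trick removes all dependence on the finer intersection numbers $\lambda_i$ while leaving precisely the parities $p_i$ as the residual obstruction. Organizing the sequence of moves so that the components are disentangled in the right order, without reintroducing complexity already removed, is the part most likely to require a careful induction.
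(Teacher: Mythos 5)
Your overall scheme is workable in principle, but it takes a genuinely different route from the paper and, as written, has two concrete problems. The paper does not prove $(iv)\Rightarrow(ii)$ or $(iv)\Rightarrow(iii)$ directly. Instead it proves a chain of local move-realization lemmas: all four oriented types $v\Delta_1^\wedge,\dots,v\Delta_4^\wedge$ realize one another, all four types $v\Delta_1^\circ,\dots,v\Delta_4^\circ$ realize one another, every $v\Delta_j^\circ$-move is realized by $v\Delta_i^\wedge$-moves (Lemma~\ref{lem-delta}), and $v\sharp$-moves and $v\Delta^\wedge$-moves realize each other (Lemma~\ref{lem-delta-sharp}). Hence $v\Delta$-, $v\Delta^\wedge$-, and $v\sharp$-equivalence all coincide, and the equivalence $(i)\Leftrightarrow(iv)$ is then quoted from the earlier paper \cite{NNSW}. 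Your plan instead rebuilds the normal-form classification from scratch for the orientation-restricted moves; that is exactly the content you defer to ``a careful induction,'' and it is where all the work lives. The paper's reduction is both shorter and avoids that induction entirely.

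Two specific gaps. First, in your invariance step the claim that each individual linking number ${\rm Lk}(K_a,K_b)$ changes by an even integer under a $v\Delta$-move is false: if the three strands of the move lie on three distinct components $a,b,c$, the move creates or destroys exactly one crossing between each pair, so ${\rm Lk}(K_a,K_b)$ changes by $\pm 1$. What is preserved is the total number of crossings between $K_i$ and $L\setminus K_i$ modulo $2$ (the endpoints of the three new chords distribute evenly over each component), and that mod-$2$ count is the parity $p_i$; the argument must be phrased at that level, not componentwise on linking numbers. Second, your key lemma for $(iv)\Rightarrow(ii)$ rests on borrowing an arc of a neighbouring component to supply the third strand of a $v\Delta^\wedge$-move, and you assert that this is why $n\geq 2$ is essential. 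That heuristic is misleading: the paper realizes a crossing change by a single $v\Delta_i^\wedge$-move entirely locally, folding one of the two strands of the crossing back on itself to serve as the third strand (Lemma~\ref{lem-cc} and Figure~\ref{pf-lem-cc-wedge}), and indeed the same moves are unknotting operations for knots, where no second component exists. The hypothesis $n\geq 2$ enters only because the parities $p_i$ are invariants of links. Moreover, your normal-form reduction must also explain why a \emph{single} virtualization of a self-crossing cannot in general be realized (it would change the parity), i.e.\ why exactly the residue $p_i$ survives; without that accounting the claim that the standard form depends only on the parities is unsubstantiated. Both gaps are repairable, but the second one is essentially a re-derivation of \cite[Theorem~1.5]{NNSW}, which the paper simply cites.
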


\begin{theorem}\label{thm12}
Let $L$ and $L'$ be oriented $n$-component 
virtual links with $n\geq 2$. 
Then the following are equivalent. 
\begin{enumerate}
\item
$L$ and $L'$ are $v\Delta^\circ$-equivalent. 

\item
$L$ and $L'$ are $vp$-equivalent. 

\item
$\lambda_i(L)=\lambda_i(L')$ holds for any $i=1,\dots,n$. 
\end{enumerate}
\end{theorem}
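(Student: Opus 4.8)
The plan is to prove the cyclic chain of implications $(1)\Rightarrow(2)\Rightarrow(3)\Rightarrow(1)$, working throughout with diagrams and using the generalized Reidemeister moves for virtual links freely, together with Theorem~\ref{thm11} and the case $n=1$ in which a $v\Delta^\circ$-move is an unknotting operation for oriented virtual knots. For $(1)\Rightarrow(2)$ I would exhibit, by an explicit diagrammatic calculation, a finite sequence of $vp$-moves and Reidemeister moves carrying one of the two $v\Delta^\circ$-tangles to the other; hence a single $v\Delta^\circ$-move is realized by $vp$-moves, which gives the implication. For $(2)\Rightarrow(3)$ I would check that $\lambda_i$ is unchanged by a $vp$-move: since such a move is supported in a disk meeting the diagram in the standard pass-move tangle, it suffices to run through the finitely many ways of distributing the strands of that tangle among the components $K_1,\dots,K_n$ and to verify in each case that the contribution of the affected crossings to each $\lambda_i$ agrees before and after the move --- this is exactly where the coherent (``$\circ$'') orientations are used. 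Combined with $(1)\Rightarrow(2)$, this also yields the invariance of $\lambda_i$ under $v\Delta^\circ$-moves.

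The heart of the argument is $(3)\Rightarrow(1)$. For each value of the invariant I would introduce a standard oriented $n$-component virtual link $\mathcal{S}(a_1,\dots,a_n)$, assembled from $n$ round circles and prescribed ``virtual clasps'' between them, chosen so that $\lambda_i(\mathcal{S}(a_1,\dots,a_n))=a_i$ by a direct computation. The key claim is that every oriented $n$-component virtual link $L$ is $v\Delta^\circ$-equivalent to some $\mathcal{S}(a_1,\dots,a_n)$: one first uses $v\Delta^\circ$-moves supported near a single component (the $n=1$ situation applied inside a ball) to make each component a trivial circle, and then uses $v\Delta^\circ$-moves spanning two or three components to reduce the remaining linking data to the standard clasps. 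By the invariance noted above, $a_i=\lambda_i(\mathcal{S}(a_1,\dots,a_n))=\lambda_i(L)$, so $L$ is in fact $v\Delta^\circ$-equivalent to $\mathcal{S}(\lambda_1(L),\dots,\lambda_n(L))$, which depends only on $L$. Consequently, if $\lambda_i(L)=\lambda_i(L')$ for all $i$, then $L$ and $L'$ are each $v\Delta^\circ$-equivalent to the same standard link, hence to each other, which completes the cycle.

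I expect the main obstacle to be the reduction step inside $(3)\Rightarrow(1)$: with only the orientation-restricted $v\Delta^\circ$-moves available (so that arbitrary crossing changes are not allowed), one must show that the mutual linking of the components can be brought down to the standard clasps, with the tuple $(\lambda_1(L),\dots,\lambda_n(L))$ as the sole residual obstruction. I would carry this out on Gauss diagrams: translate a $v\Delta^\circ$-move into the corresponding move on chords, and then show that the chords recording linking between distinct components can be created and annihilated in cancelling pairs and slid past one another freely enough that each $\lambda_i$ is the complete residual invariant, paralleling the treatment of self-crossings in the virtual knot case and the role of the parities in~\cite{NNSW}.
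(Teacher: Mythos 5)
Your overall route coincides with the paper's: show the $vp$- and $v\Delta^\circ$-moves realize each other, check that $\lambda_i$ is unchanged on the local chord picture, and reduce an arbitrary link to a normal form on Gauss diagrams whose parameters are read off by the $\lambda_i$. Two points in your plan need repair, however. First, your standard family $\mathcal{S}(a_1,\dots,a_n)$ with $\lambda_i=a_i$ for all $i$ cannot be indexed by $n$ independent integers: by definition $\sum_{i=1}^n\lambda_i(L)=0$ for every link, so only tuples summing to zero are realizable. The paper's family $M(a_2,\dots,a_n)$ accordingly carries $n-1$ parameters, with $\lambda_1=-(a_2+\cdots+a_n)$ forced; your argument goes through once you restrict to realizable tuples, but as written the family is overdetermined.

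Second, and more substantively, the step ``use the $n=1$ case inside a ball to trivialize each component'' does not make sense in the virtual category: a component's self-crossings are generally interleaved on the Gauss diagram with crossings involving other components, and there is no ball isolating it. The mechanism that actually works --- and is the technical heart of the paper's Proposition~\ref{prop-standard} --- is to show that $v\Delta^\circ$-moves realize the forbidden detour moves $FD_j$ and the forbidden moves $F_j$ (Lemmas~\ref{lem-delta-fd} and~\ref{lem-delta-f}); these, with Reidemeister~I, kill all self-chords and give the freedom to slide chord endpoints that your last paragraph appeals to. One further explicit lemma is needed that your sketch only gestures at: a nonself-chord between $G_i$ and $G_j$ with $2\le i\ne j\le n$ must be traded for a pair of chords based at $G_1$ (Lemma~\ref{lem-replacement}); without this the residual data would a priori be the full matrix of linking numbers rather than just the $\lambda_i$. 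You correctly identify this reduction as the main obstacle, but as proposed the first stage of it would fail and the second is unproven.
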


For $X\in\{v\Delta, v\Delta^{\wedge}, v\Delta^{\circ}, v\sharp, vp\}$, 
we see that any two oriented virtual knots are $X$-equivalent. 
In particular, the $X$-move is an unknotting operation 
for oriented virtual knots. 
Therefore we can define the $X$-unknotting number 
${\rm u}_X(K)$ of an oriented virtual knot $K$, 
and will prove the following.

\begin{theorem}\label{thm-infinite}
For any $X\in\{v\Delta^\wedge, \ v\Delta^\circ, \ v\sharp, \ vp\}$ 
and positive integer $m$, 
there are infinitely many oriented virtual knots $K$ 
with ${\rm u}_X(K)=m$. 
\end{theorem}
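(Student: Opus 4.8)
The plan is to bound $\mathrm{u}_X$ from below by writhe--type virtual knot invariants that refine the parities $p_i$ and the intersection numbers $\lambda_i$ of Theorems~\ref{thm11} and~\ref{thm12}, and then to exhibit explicit infinite families of knots realizing the bound. Since $p_i$ and $\lambda_i$ are trivial on a one--component link, the first step is to pass, in the knot case, to the $n$--writhe $J_n(K)\in\Z$ (for each integer $n\neq 0$) and the odd writhe $J(K)=\sum_{n\ \mathrm{odd}}J_n(K)\in 2\Z$ of an oriented virtual knot $K$. These are virtual knot invariants, they vanish on the trivial knot, and for a fixed choice of connected sum they are additive: $J_n(K\# K')=J_n(K)+J_n(K')$, and hence $J(K\# K')=J(K)+J(K')$.

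The crux is a boundedness lemma: for each $X\in\{v\Delta^\wedge, v\Delta^\circ, v\sharp, vp\}$ there is a constant $c_X$ so that, whenever $K'$ is obtained from $K$ by a single $X$--move, one has $\sum_{n\neq 0}\lvert J_n(K)-J_n(K')\rvert\leq c_X$, and moreover $\lvert J(K)-J(K')\rvert\leq 2$ when $X\in\{v\Delta^\wedge, v\sharp\}$ (the ``parity--type'' moves of Theorem~\ref{thm11}). I would prove this by fixing a model diagram of each move inside a disk, computing the indices of the finitely many crossings that are created or destroyed, and checking that the $J_n$ are affected only in a controlled way; the subtle point is that one must establish the \emph{sharp} constant, since an overestimate would weaken the lower bound below $m$. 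Granting the lemma, we obtain $\mathrm{u}_X(K)\geq \lvert J(K)\rvert/2$ for $X\in\{v\Delta^\wedge, v\sharp\}$ and $\mathrm{u}_X(K)\geq \tfrac{1}{c_X}\sum_{n\neq 0}\lvert J_n(K)\rvert$ for $X\in\{v\Delta^\circ, vp\}$. I expect this lemma to be the main obstacle: although each move is ``elemental'', the four variants differ in the orientations and signs of the strands involved, so the index bookkeeping has to be carried out separately for each, and the extremal configurations must be pinned down.

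For the matching upper bound, fix for each $X$ a ``unit'' virtual knot $D_X$ that is non--trivial, satisfies $\mathrm{u}_X(D_X)=1$ (a single $X$--move on a standard diagram of $D_X$ produces the trivial knot), and realizes one move's worth of the relevant datum, namely $\lvert J(D_X)\rvert=2$ for the parity--type moves and $\sum_{n\neq 0}\lvert J_n(D_X)\rvert=c_X$ for the intersection--type moves; for the parity--type moves one may take the (suitably oriented) virtual trefoil, and for the intersection--type moves a similar small virtual knot, the required properties being checked directly from the diagram. Put $K_m=\#^{\,m}D_X$. By additivity the relevant datum of $K_m$ is $m$ times that of $D_X$, so the boundedness lemma gives $\mathrm{u}_X(K_m)\geq m$; on the other hand, performing one $X$--move in each summand trivializes $K_m$, so $\mathrm{u}_X(K_m)\leq m$. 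Hence $\mathrm{u}_X(K_m)=m$.

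Finally, to obtain infinitely many knots for each fixed $m$, I would replace one summand of $K_m$ by a member of an infinite family $D_X=D^{(1)},D^{(2)},D^{(3)},\dots$ of pairwise non--equivalent virtual knots, each still with $\mathrm{u}_X(D^{(j)})=1$ and with the same value of the relevant writhe datum as $D_X$. Such a family is produced by performing a single $X$--move on suitably complicated diagrams of the trivial knot (so that $\mathrm{u}_X(D^{(j)})\leq 1$ automatically), arranged so that the resulting knots have writhe polynomials $\sum_{n\neq 0}J_n\, t^{\,n}$ taking distinct prescribed values --- for instance $t^{a}+t^{-a}-2$ for a sequence of exponents $a$ chosen of the parity that keeps $\lvert J\rvert$ equal to $2$ (resp.\ keeps the $n$--writhe quantity $\sum_{n\neq0}\lvert J_n\rvert$ equal to $c_X$), all of which are realizable as writhe polynomials of virtual knots. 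Then $K_{m,j}:=D^{(j)}\#\bigl(\#^{\,m-1}D_X\bigr)$ satisfies $\mathrm{u}_X(K_{m,j})=m$ by exactly the same two--sided estimate, while the $K_{m,j}$ are pairwise distinct because their writhe polynomials (equivalently, their affine index polynomials) differ. This produces, for every $m$, infinitely many oriented virtual knots with $\mathrm{u}_X=m$, completing the proof.
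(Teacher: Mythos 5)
Your strategy coincides with the paper's: lower bounds on ${\rm d}_X$ coming from the odd writhe (for $v\Delta^\wedge$ and $v\sharp$) and from the $n$-writhes (for $v\Delta^\circ$ and $vp$), matched by upper bounds obtained from connected sums of $m$ ``unit'' knots each unknottable by a single $X$-move, with the infinite families distinguished by their writhe polynomials. For $X\in\{v\Delta^\wedge, v\Delta^\circ, vp\}$ your outline is sound and, once the deferred index computations are carried out (the sharp constants are $2$ for the odd-writhe bound, $3$ for $v\Delta^\circ$ and $4$ for $vp$ in the $n$-writhe bound), it reproduces the paper's argument; the only cosmetic difference is that the paper obtains infinitely many examples by varying a twist parameter $s$ in every summand rather than by substituting a single summand.

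There is, however, a genuine error in the $v\sharp$ case. You assert $\lvert J(K)-J(K')\rvert\leq 2$ for a single $v\sharp$-move, but a $v\sharp$-move creates or destroys \emph{four} real crossings, and all four can have odd index and the same sign; the correct (and attained) bound is $\lvert J(K)-J(K')\rvert\leq 4$, i.e.\ ${\rm d}_{v\sharp}(K,K')\geq\frac14\lvert J(K)-J(K')\rvert$. Indeed, the paper's unit knot for this case has $J=4$ and is trivialized by one $v\sharp$-move, which directly contradicts your inequality. With the correct constant, your proposed unit knot (the virtual trefoil, with $\lvert J\rvert=2$) only yields ${\rm u}_{v\sharp}\bigl(\#^{\,m}D\bigr)\geq m/2$, which does not close the gap with the upper bound $m$. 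To repair this branch you must instead exhibit a unit knot $D$ with $\lvert J(D)\rvert=4$ that is unknotted by a single $v\sharp$-move (four odd crossings of equal sign participating in the move), and build the connected sums from that; this is exactly what the paper does.
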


This paper is organized as follows. 
In Section~\ref{sec2}, 
we divide virtualized $\Delta$-moves into eight types 
$v\Delta_1^{\wedge},\dots,v\Delta_4^{\wedge}$ and 
$v\Delta_1^{\circ},\dots,v\Delta_4^{\circ}$, 
and virtualized $\sharp$-moves into two types $v\sharp_1$ and $v\sharp_2$ 
according to the orientations of strings. 
We study their relations and prove Theorem~\ref{thm11}. 
Sections~\ref{sec3} and~\ref{sec4} are devoted to the proof of Theorem~\ref{thm12}. 
In Section~\ref{sec3}, 
we divide virtualized pass-moves into four types 
$vp_1,\dots,vp_4$ according to the string orientations. 
We study relations among $vp_i$'s and $v\Delta_j^{\circ}$'s, 
and prove the equivalence of (i) and (ii) in Theorem~\ref{thm12}. 
In Section~\ref{sec4}, 
we construct a family of oriented $n$-component virtual links, 
and prove that any oriented $n$-component virtual link $L$ 
is $v\Delta^{\circ}$-equivalent to a certain link belonging to the family. 
We define invariants $\lambda_i(L)$ $(i=1,\dots,n)$ 
by using the linking numbers of $L$, 
and prove the equivalence of (i) and (iii) in 
Theorem~\ref{thm12}. 
Finally, in Section~\ref{sec5}, 
we provide lower bounds for the $X$-distance 
between two oriented virtual knots 
for $X\in\{v\Delta^\wedge, \ v\Delta^\circ, \ v\sharp, \ vp\}$ 
in terms of their odd writhes and $n$-writhes. 
By using these lower bounds, 
we prove Theorem~\ref{thm-infinite}.

\section{Proof of Theorem~\ref{thm11}}\label{sec2}

A \textit{virtualized $\Delta$-move} or simply a \textit{$v\Delta$-move}  
is a local deformation on a link diagram 
as shown in Figure~\ref{ori-vdelta}. 
There are eight oriented types of virtualized $\Delta$-moves 
labeled by 
$v\Delta_1^\wedge,\dots,v\Delta_4^\wedge$ and 
$v\Delta_1^\circ,\dots,v\Delta_4^\circ$ 
as in the figure. 
The first four moves are collectively called \textit{$v\Delta^\wedge$-moves} 
and the latter \textit{$v\Delta^\circ$-moves}. 
We say that two oriented virtual links $L$ and $L'$ are 
\textit{$v\Delta$-, $v\Delta^\wedge$-}, and \textit{$v\Delta^\circ$-equivalent} 
if their diagrams are related by a finite sequence of 
$v\Delta$-, $v\Delta^\wedge$-, and $v\Delta^\circ$-moves 
(up to generalized Reidemeister moves), 
respectively. 

\begin{figure}[htbp]
  \centering
    \begin{overpic}[width=11cm]{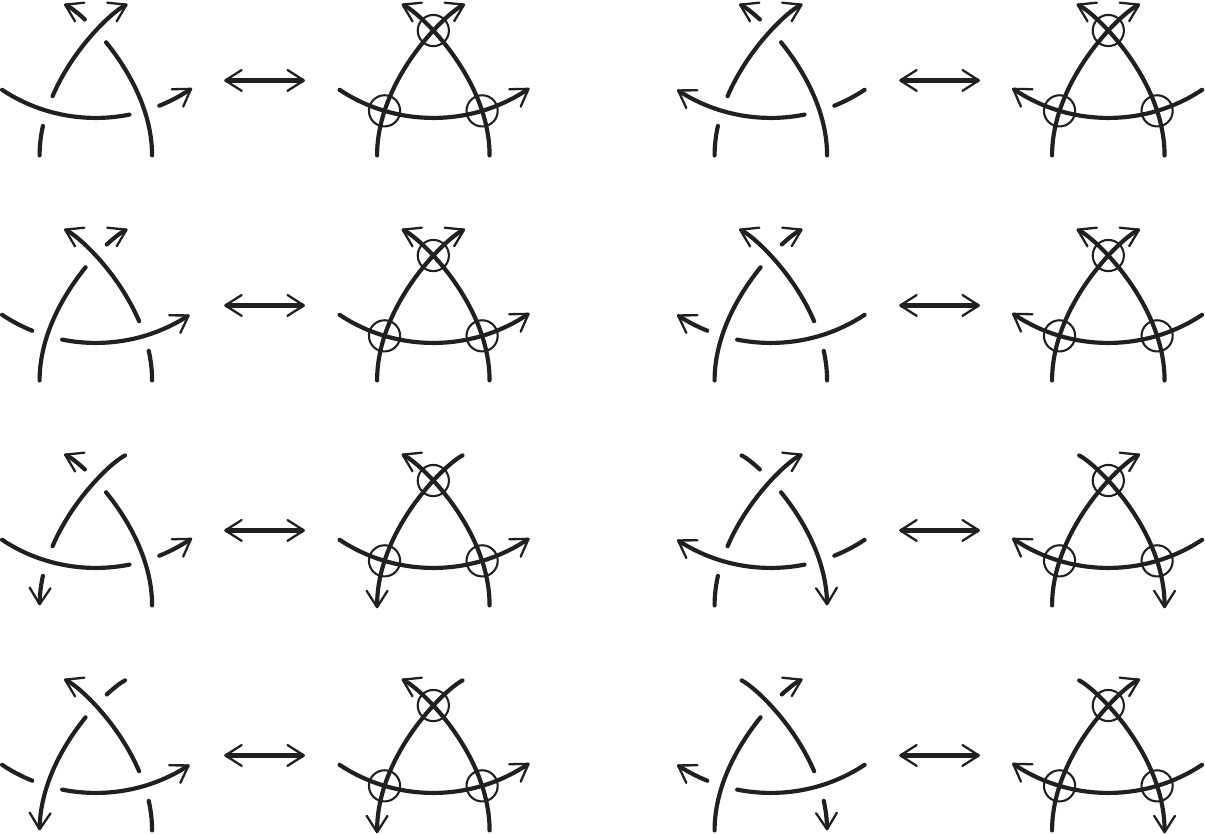}
      \put(59.5,203){$v\Delta_{1}^{\wedge}$}
      \put(235,203){$v\Delta_{2}^{\wedge}$}
      \put(59.5,144.5){$v\Delta_{3}^{\wedge}$}
      \put(235,144.5){$v\Delta_{4}^{\wedge}$}
      \put(59.5,86){$v\Delta_{1}^{\circ}$}
      \put(235,86){$v\Delta_{2}^{\circ}$}
      \put(59.5,27.5){$v\Delta_{3}^{\circ}$}
      \put(235,27.5){$v\Delta_{4}^{\circ}$}
    \end{overpic}
  \caption{Virtualized $\Delta$-moves}
  \label{ori-vdelta}
\end{figure}

\begin{lemma}\label{lem-cc}
For any $i\in\{1,\dots,4\}$, 
we have the following. 
\begin{enumerate}
\item
A crossing change is realized by 
a $v\Delta_i^\wedge$-move. 
\item
A crossing change is realized 
by a $v\Delta_i^\circ$-move. 
\end{enumerate}
\end{lemma}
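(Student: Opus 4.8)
The plan is to reduce the statement to an explicit local computation with diagrams. Both a crossing change and each $v\Delta_i^{\wedge}$- (resp.\ $v\Delta_i^{\circ}$-) move are supported in a disk, so it suffices to exhibit, for each $i$, a finite sequence of generalized Reidemeister moves together with \emph{exactly one} $v\Delta_i^{\wedge}$-move (resp.\ $v\Delta_i^{\circ}$-move) which transforms the trivial $2$-string tangle carrying a single positive crossing into the one carrying a single negative crossing, the boundary arcs being oriented as dictated by the type in question. Performing the very same sequence inside an arbitrary diagram then changes exactly one crossing and leaves everything else untouched. I would write out the case $i=1$ in detail (with a figure), and dispatch the remaining cases by the identical recipe: the only data that change from type to type are the orientation of the auxiliary arc introduced below, and possibly a reflection of the local picture, both of which turn a crossing change into a crossing change.

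For the construction itself I would start from the crossing to be changed and first enlarge the local picture by a Reidemeister~II move, pushing a small bigon of one of the two strands across the other so that an extra arc now runs parallel to it on the far side of the crossing. This produces three strand segments inside the disk in the triangular position on which a $v\Delta$-move acts (two of the segments belonging to the same strand, joined through the bigon), and the freedom of choosing the orientation of the pushed bigon is exactly what is needed to match the orientation pattern of the prescribed type in Figure~\ref{ori-vdelta}. I would then apply the single $v\Delta_i^{\wedge}$-move (resp.\ $v\Delta_i^{\circ}$-move) to this triangle. In the resulting tangle every classical crossing other than the target one should be removable by inverse Reidemeister~I/II moves, and the virtual crossings created by the move should be removable by virtual Reidemeister (detour) moves; after this cleanup one is left with the original $2$-string tangle in which the target crossing has had its sign reversed. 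The $v\Delta_i^{\circ}$-case runs in exactly the same way, with the orientation of the auxiliary bigon chosen so that the local picture realizes the $\circ$-pattern instead.

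The main obstacle I anticipate is purely the orientation bookkeeping: one must insert the auxiliary bigon so that the three local arcs acquire \emph{precisely} the orientation pattern of the chosen type $v\Delta_i^{\wedge}$ or $v\Delta_i^{\circ}$, and this is the only place where the eight types genuinely behave differently, so it is what forces either the case-by-case drawing of the eight local configurations or the symmetry remark above. A second, more routine point is to confirm that the post-move simplification really does cancel \emph{all} the auxiliary classical crossings by Reidemeister~I/II and \emph{all} the auxiliary virtual crossings by detour moves, so that the net effect is a single crossing change and nothing else; this is a finite diagram check rather than a conceptual difficulty. Once both points are settled, statements (i) and (ii) follow at once.
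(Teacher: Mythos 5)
Your overall strategy coincides with the paper's: produce one explicit local sequence realizing a crossing change with a single $v\Delta_1^\wedge$-move, then obtain the other seven types by reversing all orientations ($i=2$), switching all crossings ($i=3,4$), and reversing one string ($\circ$ versus $\wedge$). The problem is that the core construction you propose does not work. First, a single Reidemeister~II bigon does not create the configuration on which a $v\Delta$-move acts: the move exchanges a triangle of three real crossings, \emph{one for each pair} of the three arcs, with the corresponding virtual triangle, whereas your finger meets the strand it is pushed across twice and the third arc not at all; moreover, at least for the $\circ$-types, where the three chords must carry equal signs, the two crossings born of a Reidemeister~II move have opposite signs. Second, and more decisively, your sequence runs the $v\Delta$-move in the deleting direction (you speak of ``the virtual crossings created by the move''), so the target crossing, being one of the three real crossings of the triangle, is virtualized rather than changed. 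Count real crossings in the tangle: $1$ at the start, $3$ after the Reidemeister~II move, $0$ after the $v\Delta$-move, and your cleanup only removes crossings. Since the two strands of the tangle must meet an odd number of times inside the disk, what survives is a single \emph{virtual} crossing: the net effect of your sequence is the virtualization of the crossing, not a crossing change. (Your own remark that ``every classical crossing other than the target one'' is then removable is inconsistent with the move having just virtualized the target.)

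To repair this you must run the $v\Delta$-move in the creating direction: use detour moves and Reidemeister moves to lay down auxiliary arcs forming a \emph{virtual} triangle adjacent to the target crossing, apply the $v\Delta_1^\wedge$-move to produce three new real crossings of which one is the desired changed crossing, and arrange the setup so that the remaining two new crossings cancel the original crossing and each other by Reidemeister~I/II moves; a crossing count shows that some Reidemeister~I move or an extra Reidemeister~II is unavoidable, since a $v\Delta$-move changes the number of real crossings by three while a crossing change preserves it. An explicit sequence of this kind is exactly what the paper exhibits in Figures~\ref{pf-lem-cc-wedge} and~\ref{pf-lem-cc-circ}; your symmetry reductions for the remaining types are fine once such a sequence is in hand.
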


\begin{proof}
(i) The sequence in the top row of Figure~\ref{pf-lem-cc-wedge} 
shows that a crossing change is realized by a combination of 
a $v\Delta_1^\wedge$-move and several generalized Reidemeister moves, 
where the symbol $\stackrel{\rm R}{\longleftrightarrow}$ means a combination of 
generalized Reidemeister moves. 
For a $v\Delta_2^\wedge$-move, 
we may use the above sequence with the orientations 
of all the strings reversed. 
See the second row of the figure. 
For $v\Delta_3^\wedge$- and $v\Delta_4^\wedge$-moves, 
we may use the sequences for 
$v\Delta_1^\wedge$- and $v\Delta_2^\wedge$-moves 
with opposite crossing information at every real crossing, respectively. 
See the third and bottom rows of the figure. 

\begin{figure}[t]
\centering
\vspace{1em}
    \begin{overpic}[width=11cm]{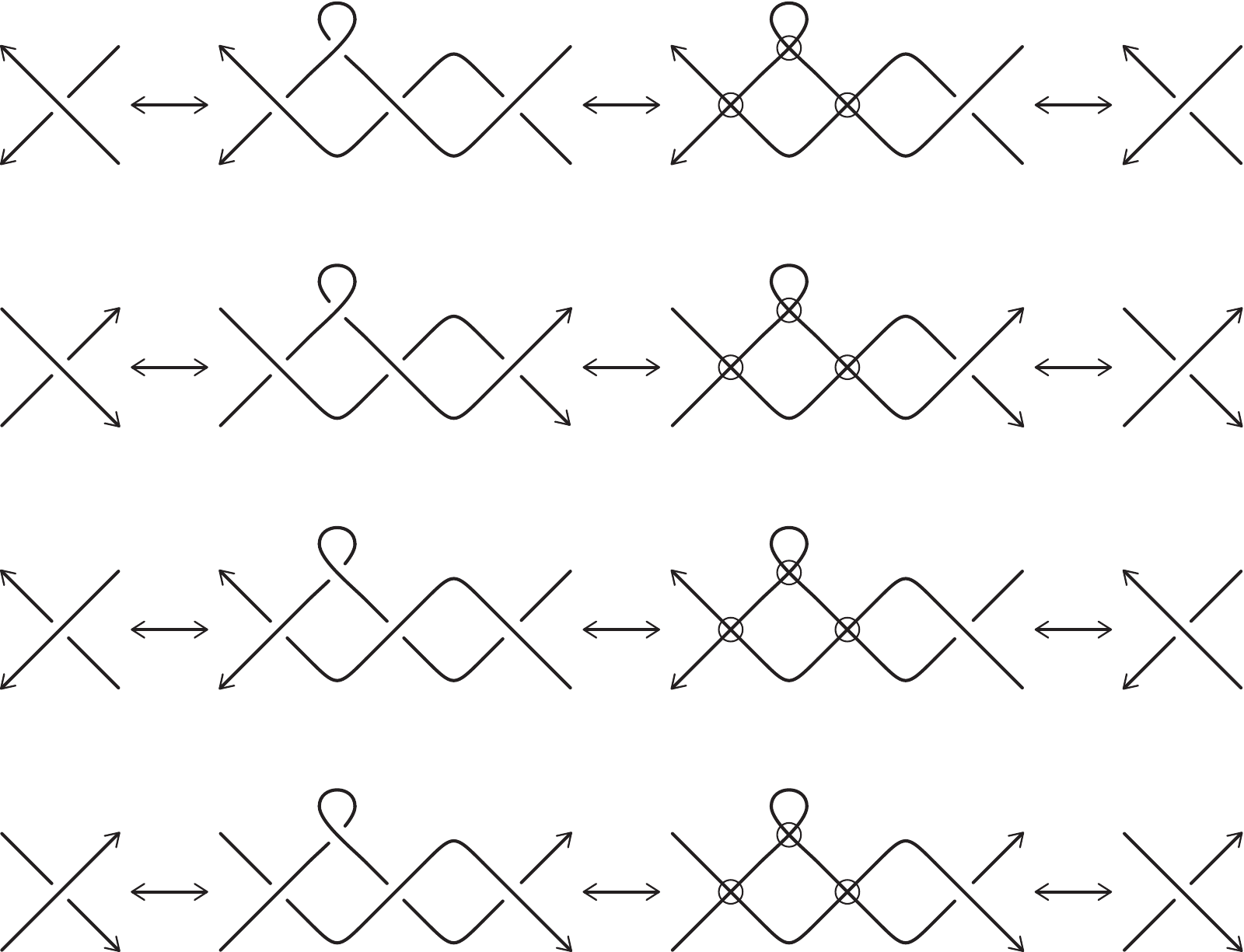}
      \put(0,245.5){\underline{$i=1$}}
      \put(0,179.5){\underline{$i=2$}}
      \put(0,113.5){\underline{$i=3$}}
      \put(0,47.5){\underline{$i=4$}}
      \put(39,217.6){R}
      \put(147,219.9){$v\Delta_{1}^{\wedge}$}
      \put(266.5,217.6){R}
      \put(39,151.6){R}
      \put(147,155.5){$v\Delta_{2}^{\wedge}$}
      \put(266.5,151.6){R}
      \put(39,85.6){R}
      \put(147,88.75){$v\Delta_{3}^{\wedge}$}
      \put(266.5,85.6){R}
      \put(39,19.6){R}
      \put(147,22){$v\Delta_{4}^{\wedge}$}
      \put(266.5,19.6){R}
    \end{overpic}
  \caption{Proof of Lemma~\ref{lem-cc}(i)}
  \label{pf-lem-cc-wedge}
\end{figure}

(ii) The sequence in Figure~\ref{pf-lem-cc-circ} 
shows that a crossing change is realized by a combination of 
a $v\Delta_1^\circ$-move and several generalized Reidemeister moves. 
We remark that it is obtained from the sequence for a $v\Delta_1^\wedge$-move 
given in (i) by reversing the orientation of the string pointed 
from the lower right to the upper left. 
We have a similar sequence for a $v\Delta_i^\circ$-move $(i=2,3,4)$ 
as shown in the figure. 
\end{proof}

\begin{figure}[htbp]
  \centering
  \vspace{1em}
    \begin{overpic}[width=11cm]{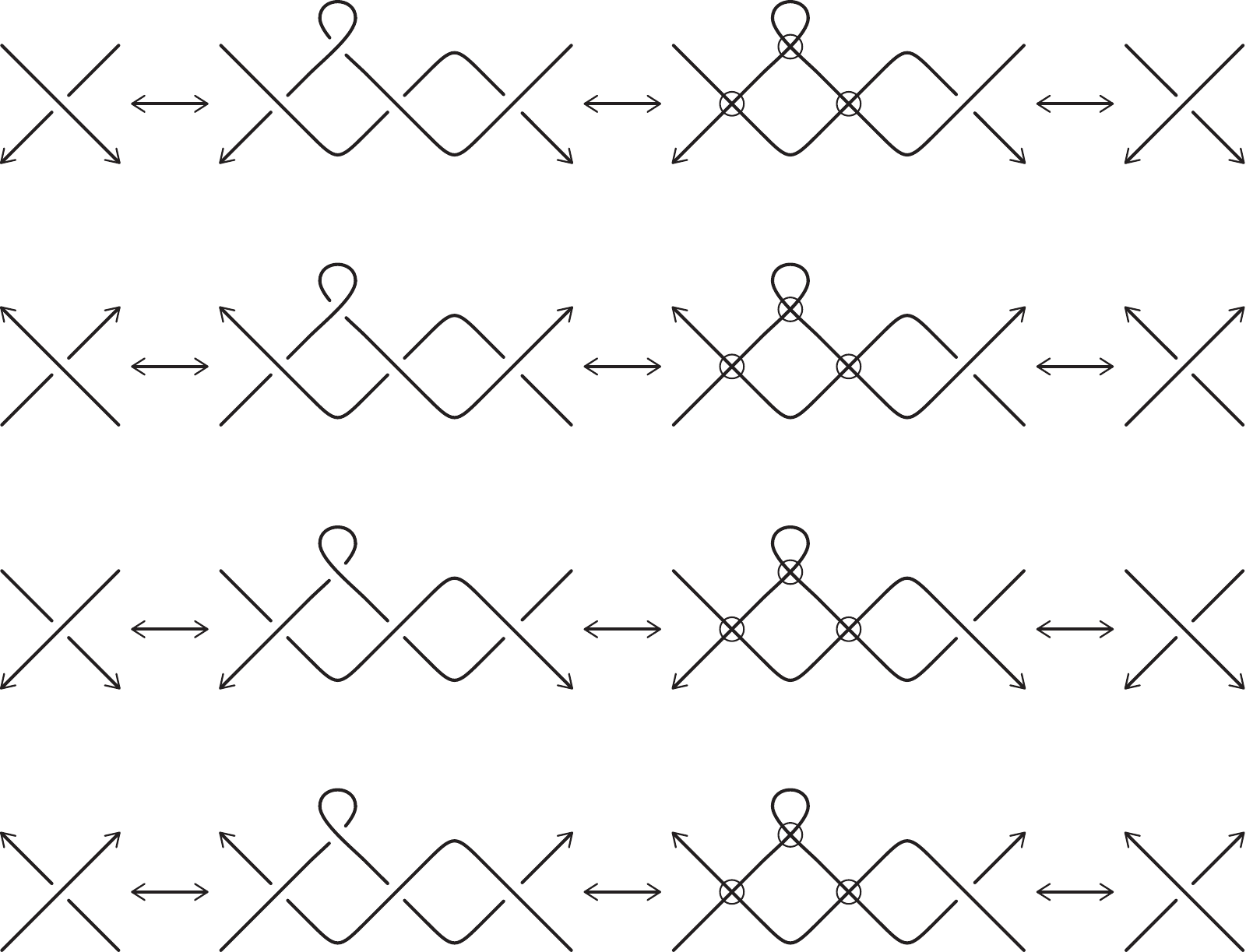}
      \put(0,245.5){\underline{$i=1$}}
      \put(0,179.5){\underline{$i=2$}}
      \put(0,113.5){\underline{$i=3$}}
      \put(0,47.5){\underline{$i=4$}}
      \put(39,217.6){R}
      \put(147,219.9){$v\Delta_{1}^{\circ}$}
      \put(266.5,217.6){R}
      \put(39,151.6){R}
      \put(147,155.5){$v\Delta_{2}^{\circ}$}
      \put(266.5,151.6){R}
      \put(39,85.6){R}
      \put(147,88.75){$v\Delta_{3}^{\circ}$}
      \put(266.5,85.6){R}
      \put(39,19.6){R}
      \put(147,22){$v\Delta_{4}^{\circ}$}
      \put(266.5,19.6){R}
    \end{overpic}
  \caption{Proof of Lemma~\ref{lem-cc}(ii)}
  \label{pf-lem-cc-circ}
\end{figure}

For two local moves $X$ and $Y$, 
we use the notation $X\Rightarrow Y$ 
if a $Y$-move is realized by a combination of 
$X$-moves and generalized Reidemeister moves. 

\begin{lemma}\label{lem-delta}
For the local moves 
$v\Delta_i^\wedge$ and $v\Delta_j^\circ$ 
$(i,j=1,\dots,4)$, 
we have the following. 
\begin{enumerate}
\item
$v\Delta_1^\wedge\Leftrightarrow
v\Delta_2^\wedge\Leftrightarrow
v\Delta_3^\wedge\Leftrightarrow
v\Delta_4^\wedge$. 
\item
$v\Delta_1^\circ\Leftrightarrow
v\Delta_2^\circ\Leftrightarrow
v\Delta_3^\circ\Leftrightarrow
v\Delta_4^\circ$. 
\item
$v\Delta_i^\wedge\Rightarrow
v\Delta_j^\circ$ for any $i$ and $j$. 
\end{enumerate}
\end{lemma}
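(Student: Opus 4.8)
The plan is to prove everything by drawing explicit local diagram sequences, exactly in the spirit of the proof of Lemma~\ref{lem-cc}, and leaning on that lemma as the main tool. The key observation is that Lemma~\ref{lem-cc} already gives us crossing changes ``for free'' from each of the eight moves, so the strategy throughout is: realize the target move from a combination of (a) one instance of the source move, (b) several crossing changes, and (c) generalized Reidemeister moves. Since a crossing change can in turn be realized by any $v\Delta_i^\wedge$- or $v\Delta_i^\circ$-move (Lemma~\ref{lem-cc}), this lets us freely toggle over- and under-information at real crossings inside any diagram sequence.

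For part (i), I would first note that $v\Delta_1^\wedge\Leftrightarrow v\Delta_2^\wedge$ and $v\Delta_3^\wedge\Leftrightarrow v\Delta_4^\wedge$ are immediate: reversing the orientations of all three strands in a $v\Delta_1^\wedge$-move (resp.\ $v\Delta_3^\wedge$-move) picture produces a $v\Delta_2^\wedge$-move (resp.\ $v\Delta_4^\wedge$-move), and orientation reversal of a whole diagram is a symmetry of the theory. It then remains to connect the pair $\{v\Delta_1^\wedge, v\Delta_2^\wedge\}$ with the pair $\{v\Delta_3^\wedge, v\Delta_4^\wedge\}$; these differ only by the crossing information at the three real crossings involved in the move. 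So the plan is to draw a short sequence showing that a $v\Delta_3^\wedge$-move equals one $v\Delta_1^\wedge$-move preceded and followed by crossing changes (three before, three after, say, to flip all the relevant crossings), and then invoke Lemma~\ref{lem-cc}(i) to replace each crossing change by a $v\Delta_1^\wedge$-move and Reidemeister moves. Part (ii) is handled by the identical argument with $\wedge$ replaced by $\circ$ throughout, using Lemma~\ref{lem-cc}(ii) in place of Lemma~\ref{lem-cc}(i).

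For part (iii), by parts (i) and (ii) it suffices to show a single implication, say $v\Delta_1^\wedge\Rightarrow v\Delta_1^\circ$. The two pictures $v\Delta_1^\wedge$ and $v\Delta_1^\circ$ differ only in the orientation of one of the three strands, as already noted in the proof of Lemma~\ref{lem-cc}(ii). My plan is to exhibit a local sequence: start from the left-hand side of a $v\Delta_1^\circ$-move, apply some Reidemeister moves and crossing changes to bring it into a configuration containing the left-hand side of a $v\Delta_1^\wedge$-move as a sub-tangle, apply that $v\Delta_1^\wedge$-move, and then undo the preparatory moves to reach the right-hand side of the $v\Delta_1^\circ$-move; finally replace each crossing change via Lemma~\ref{lem-cc}(i). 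Concretely I expect this to use the standard trick of inserting a cancelling pair of real crossings (Reidemeister II) so that one strand runs ``doubled'' with matching orientation, performing the $\wedge$-move on the doubled copy, and then removing the extra crossings.

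The main obstacle I anticipate is purely bookkeeping rather than conceptual: getting the orientations and the eight crossing signs exactly right in the figures for part (iii), and making sure that the inserted crossing changes can indeed all be supplied by the source move via Lemma~\ref{lem-cc} (they can, since that lemma produces a crossing change from \emph{each} type). There is no deep content here — the real input is that virtualization together with a single move of the right flavour already gives crossing changes, after which the eight types collapse as claimed. I would therefore present the proof as a handful of labelled figures with the sequences $\stackrel{\rm R}{\longleftrightarrow}$, $\stackrel{v\Delta_1^\wedge}{\longleftrightarrow}$, and ``crossing change'' (itself justified by Lemma~\ref{lem-cc}) interleaved, mirroring the format of Figures~\ref{pf-lem-cc-wedge} and~\ref{pf-lem-cc-circ}.
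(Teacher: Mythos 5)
Your overall strategy---reduce everything to one instance of the source move plus crossing changes plus Reidemeister moves, and absorb the crossing changes via Lemma~\ref{lem-cc}---is exactly the paper's toolkit, and the step connecting $\{v\Delta_1^\wedge,v\Delta_2^\wedge\}$ to $\{v\Delta_3^\wedge,v\Delta_4^\wedge\}$ by conjugating with crossing changes at the three real crossings is sound. The gap is in the step you call ``immediate'': deducing $v\Delta_1^\wedge\Leftrightarrow v\Delta_2^\wedge$ (and $v\Delta_3^\wedge\Leftrightarrow v\Delta_4^\wedge$, and their $\circ$ analogues in part (ii)) from orientation reversal. Orientation reversal $D\mapsto\overline{D}$ is indeed an involution on diagrams commuting with generalized Reidemeister moves, and it carries a $v\Delta_1^\wedge$-move on $D$ to a $v\Delta_2^\wedge$-move on $\overline{D}$. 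But this only shows that the symmetry \emph{conjugates} the $v\Delta_1^\wedge$-equivalence relation into the $v\Delta_2^\wedge$-equivalence relation, i.e.\ that the two statements ``$v\Delta_1^\wedge\Rightarrow v\Delta_2^\wedge$'' and ``$v\Delta_2^\wedge\Rightarrow v\Delta_1^\wedge$'' are equivalent to each other; it proves neither. In the paper's sense, $X\Rightarrow Y$ demands a local sequence of $X$-moves and Reidemeister moves supported near the $Y$-move and matching the fixed boundary data, and you cannot reverse the orientations of the three strands running through that disk by Reidemeister moves or crossing changes. (The two forbidden moves, exchanged by the over/under symmetry yet generating different equivalence relations, show that a symmetry exchanging two moves does not make them interchangeable.) The paper fills exactly this hole with an explicit diagrammatic sequence: the left tangle of a $v\Delta_2^\wedge$-move is carried to its right tangle by an ordinary $\Delta$-move, then a $v\Delta_1^\wedge$-move, then a Reidemeister move, and the $\Delta$-move is in turn realized by two crossing changes and a Reidemeister move, hence by $v\Delta_1^\wedge$-moves via Lemma~\ref{lem-cc}(i). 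Some such concrete input relating a move to its orientation reverse is unavoidable, and your proposal does not supply it.

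For part (iii) your reduction to a single implication is correct and matches the paper, but the same caveat applies to the execution: you cannot ``prepare'' the left side of a $v\Delta_1^\circ$-move into a $v\Delta_1^\wedge$-configuration by Reidemeister moves and crossing changes, since these preserve the orientation of each strand through the disk. The finger/doubling idea you mention at the end is the right fix (it locally produces a piece of strand with the opposite orientation), but expect the bookkeeping to be heavier than one application of the source move: the paper's sequence realizing a $v\Delta_1^\circ$-move uses three crossing changes and three $v\Delta_2^\wedge$-moves.
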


\begin{proof}
(i) It is sufficient to prove 
\[
v\Delta_1^\wedge\Rightarrow v\Delta_2^\wedge 
\Rightarrow v\Delta_4^\wedge \Rightarrow 
v\Delta_3^\wedge\Rightarrow v\Delta_1^\wedge.\]
The sequence in the top row of Figure~\ref{pf-lem-delta1} shows that 
a $v\Delta_2^\wedge$-move is 
realized by a combination of a $\Delta$-move, 
a $v\Delta_1^\wedge$-move, and a generalized 
Reidemeister move. 
Since a $\Delta$-move is realized by 
a combination of two crossing changes and 
a generalized Reidemeister move, 
and a crossing change is realized by 
a $v\Delta_1^\wedge$-move by Lemma~\ref{lem-cc}(i), 
we have $v\Delta_1^\wedge\Rightarrow v\Delta_2^\wedge$. 
The remaining cases are proved similarly as shown in the figure, 
where $\stackrel{\rm cc}{\longleftrightarrow}$ 
means a combination of crossing changes at real crossings.

\begin{figure}[htbp]
\centering
\vspace{2em}
    \begin{overpic}[width=10cm]{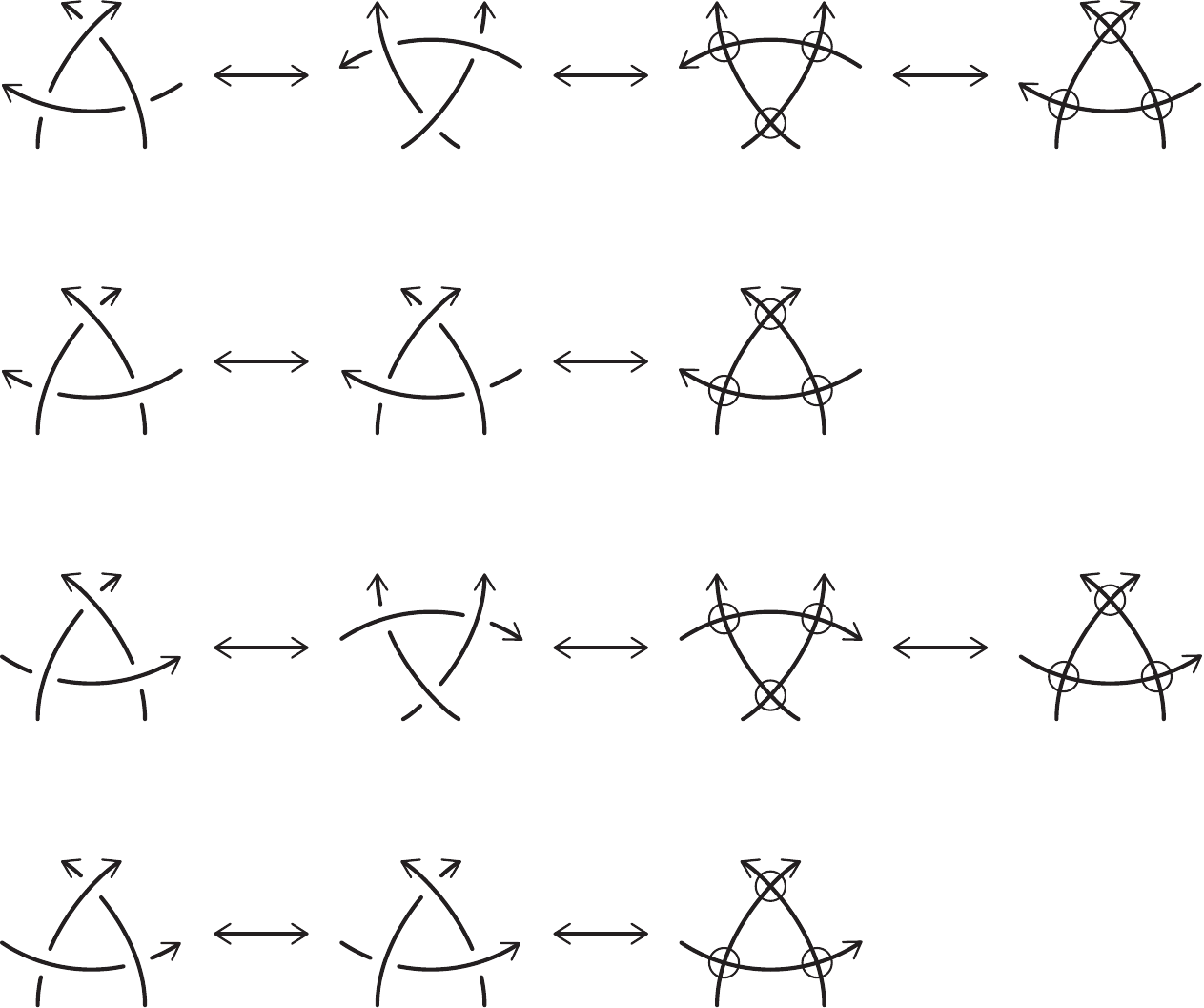}
      \put(0,249){\underline{$v\Delta_{1}^{\wedge}\Rightarrow v\Delta_{2}^{\wedge}$}}
      \put(0,181.4){\underline{$v\Delta_{2}^{\wedge}\Rightarrow v\Delta_{4}^{\wedge}$}}
      \put(0,113.8){\underline{$v\Delta_{4}^{\wedge}\Rightarrow v\Delta_{3}^{\wedge}$}}
      \put(0,46.2){\underline{$v\Delta_{3}^{\wedge}\Rightarrow v\Delta_{1}^{\wedge}$}}
      \put(57.6,224.8){$\Delta$}
      \put(133,227.3){$v\Delta_{1}^{\wedge}$}
      \put(219,224.8){R}
      \put(57.3,157.3){cc}
      \put(133,159.6){$v\Delta_{2}^{\wedge}$}
      \put(57.6,89.6){$\Delta$}
      \put(133,92){$v\Delta_{4}^{\wedge}$}
      \put(219,89.6){R}
      \put(57.6,22){cc}
      \put(133,24.5){$v\Delta_{3}^{\wedge}$}
    \end{overpic}
  \caption{Proof of Lemma~\ref{lem-delta}(i)}
  \label{pf-lem-delta1}
\end{figure}

(ii) It is sufficient to prove 
\[
v\Delta_1^\circ\Rightarrow v\Delta_2^\circ 
\Rightarrow v\Delta_4^\circ \Rightarrow 
v\Delta_3^\circ\Rightarrow v\Delta_1^\circ.\]
Each of the implications can be proved by reversing the orientation of a certain string 
in a sequence given in (i). 
For example, 
Figure~\ref{pf-lem-delta2} shows $v\Delta_1^\circ\Rightarrow v\Delta_2^\circ$.

\begin{figure}[htbp]
\centering
    \begin{overpic}[width=10cm]{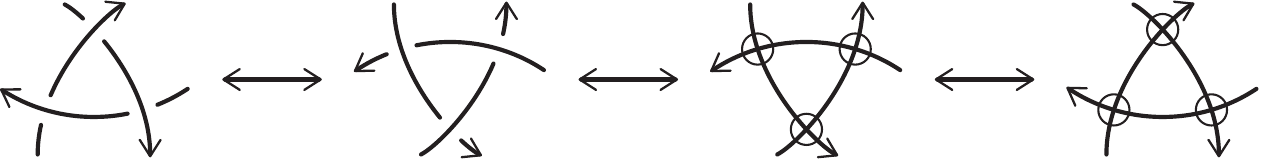}
      \put(57.6,22.3){$\Delta$}
      \put(133,24.8){$v\Delta_{1}^{\circ}$}
      \put(219,22.3){R}
    \end{overpic}
  \caption{Proof of $v\Delta_1^\circ\Rightarrow v\Delta_2^\circ$}
  \label{pf-lem-delta2}
\end{figure}

(iii) 
By (i) and (ii), it is sufficient to prove 
$v\Delta_2^\wedge\Rightarrow v\Delta_1^\circ$. 
Figure~\ref{pf-lem-delta3} shows that a $v\Delta_1^\circ$-move is realized by 
a combination of 
three crossing changes, three $v\Delta_2^\wedge$-moves, 
and several generalized Reidemesiter moves. 
Therefore we have $v\Delta_2^\wedge\Rightarrow v\Delta_1^\circ$ 
by Lemma~\ref{lem-cc}(ii). 
\end{proof}

\begin{figure}[htbp]
\centering
    \begin{overpic}[width=10cm]{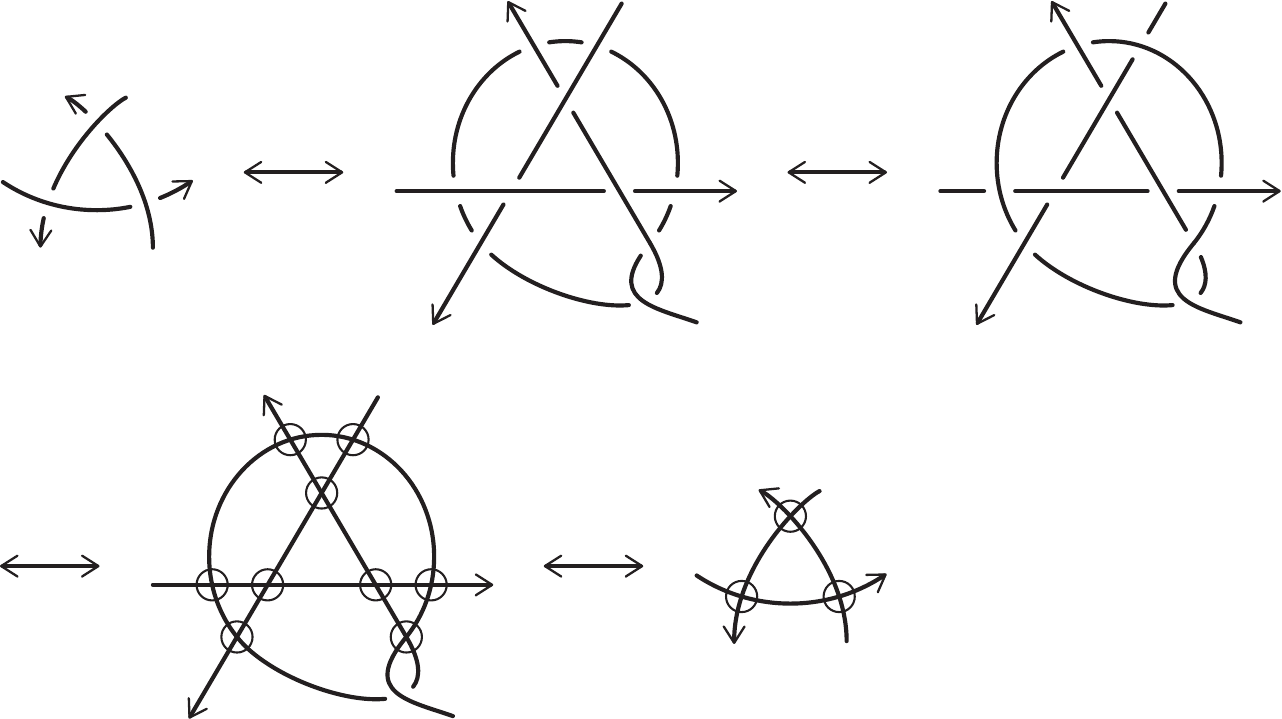}
      \put(61.3,125.8){R}
      \put(181.5,125.8){cc}
      \put(2,40.9){$v\Delta_{2}^{\wedge}$}
      \put(128,38.4){R}
    \end{overpic}
  \caption{Proof of $v\Delta_2^\wedge\Rightarrow v\Delta_1^\circ$}
  \label{pf-lem-delta3}
\end{figure}

A \textit{virtualized $\sharp$-move} or simply 
a \textit{$v\sharp$-move} is a local deformation on a link diagram 
as shown in Figure~\ref{vsharp}. 
There are two types of virtualized $\sharp$-moves 
labeled by $v\sharp_1$ and $v\sharp_2$ 
according to the sign of the real crossings 
as in the figure. 
We say that two oriented virtual links $L$ and $L'$ are 
\textit{$v\sharp$-equivalent} 
if their diagrams are related by a finite sequence of 
$v\sharp$-moves (up to generalized Reidemeister moves).

\begin{figure}[htbp]
\centering
    \begin{overpic}[width=7cm]{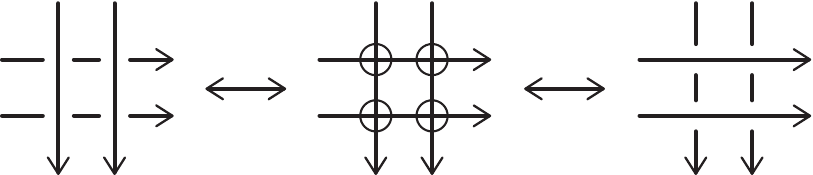}
      \put(54.5,28){$v\sharp_{1}$}
      \put(132.5,28){$v\sharp_{2}$}
    \end{overpic}
  \caption{Virtualized $\sharp$-moves}
  \label{vsharp}
\end{figure}

\begin{lemma}\label{lem-sharp}
$v\sharp_1\Leftrightarrow v\sharp_2$. 
More precisely, a $v\sharp_2$-move is realized by a $v\sharp_1$-move, 
and vice versa. 
\end{lemma}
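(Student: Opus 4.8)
The plan is to mimic the strategy already used in Lemma~\ref{lem-delta}: since a $\sharp$-move decomposes into two crossing changes together with a generalized Reidemeister move, and since a crossing change can be produced from the available local moves, one can convert a $v\sharp_1$-pattern into a $v\sharp_2$-pattern by flipping the signs of all the real crossings involved. Concretely, I would first exhibit a short diagrammatic sequence showing that a $v\sharp_2$-move is realized by a $v\sharp_1$-move followed by crossing changes at each real crossing occurring in the tangle, absorbed by generalized Reidemeister moves; this is the analogue of the rows of Figure~\ref{pf-lem-delta1}. The symmetry of the $\sharp$-tangle means the reverse direction is obtained by reading the same picture backwards (or by the mirror-type argument used for $v\Delta_3^\wedge$ and $v\Delta_4^\wedge$ in Lemma~\ref{lem-cc}(i)).

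The one point that needs care is that, unlike in Lemma~\ref{lem-delta}, there is no obviously ``elemental'' move here whose existence we have already proved that produces a crossing change; the $v\sharp$-moves themselves are all we have. So the key step is to verify directly, by a picture, that a crossing change at a single real crossing is realized by a $v\sharp_1$-move (and hence also by a $v\sharp_2$-move) together with generalized Reidemeister moves — the $\sharp$-clasp tangle contains a pair of crossings of opposite sign, and inserting a canceling pair next to an existing crossing before and after applying the move should do this, exactly as in the first row of Figure~\ref{pf-lem-cc-wedge}. Once the crossing change is in hand, I can freely adjust the signs of the real crossings in any $v\sharp_1$-tangle to turn it into a $v\sharp_2$-tangle, and conversely, completing the biconditional $v\sharp_1\Leftrightarrow v\sharp_2$.

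I expect the main obstacle to be purely bookkeeping: making sure that the orientations in the $v\sharp$-tangle are consistent throughout the crossing-change-via-$v\sharp$ sequence, since $\sharp$-moves are traditionally defined on two parallel strands with a prescribed orientation, and one must check that the same sequence (possibly with all orientations reversed, as in Lemma~\ref{lem-cc}(i)) still applies. The proof will therefore consist of two labelled figures — one showing $v\sharp_1 \Rightarrow$ (crossing change) and one showing $v\sharp_1 \Leftrightarrow v\sharp_2$ — with the text simply pointing to them, in the same compact style as the proofs of Lemmas~\ref{lem-cc} and~\ref{lem-delta}.
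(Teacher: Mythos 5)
Your argument does establish the equivalence $v\sharp_1\Leftrightarrow v\sharp_2$, but by a genuinely different and more roundabout route than the paper. The paper's proof is a single diagrammatic sequence (Figure~\ref{pf-lem-sharp}): the $v\sharp_2$-tangle is carried by generalized Reidemeister moves alone --- essentially detour moves through virtual crossings --- to a position where one application of a $v\sharp_1$-move produces the other side, and the converse is then obtained by changing the crossing information at every real crossing in that same sequence. No crossing changes are used as intermediate moves, so no auxiliary lemma is needed. You instead first prove that a crossing change is realized by a $v\sharp_1$-move (this is exactly the content and figure of Lemma~\ref{lem-cc2}, which the paper proves \emph{after} this lemma and deduces for $i=2$ \emph{from} this lemma), and then convert a $v\sharp_1$-tangle into a $v\sharp_2$-tangle by changing the signs of its real crossings. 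That reorganization is logically sound and not circular, provided you also draw the mirror picture realizing a crossing change by a $v\sharp_2$-move directly; it buys you a proof that treats the two $v\sharp$-types symmetrically and reuses the crossing-change mechanism already familiar from Lemmas~\ref{lem-cc} and~\ref{lem-delta}, at the cost of an extra figure and a longer chain of moves.

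There is, however, one respect in which your proof falls short of the statement as written. The lemma asserts ``more precisely'' that a $v\sharp_2$-move is realized by \emph{a} (single) $v\sharp_1$-move, and this one-for-one realization is what the paper invokes in Lemma~\ref{lem-cc2} to transfer the crossing-change picture from $v\sharp_1$ to $v\sharp_2$. Your construction realizes one $v\sharp_2$-move by one $v\sharp_1$-move plus four crossing changes, each of which costs a further $v\sharp_1$-move, so you obtain only the qualitative implication $v\sharp_1\Rightarrow v\sharp_2$ and not the single-move refinement. To recover the full statement you would still need the direct Reidemeister-only sequence of Figure~\ref{pf-lem-sharp}, or else rephrase the downstream use in Lemma~\ref{lem-cc2} so that it does not rely on the move count.
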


\begin{proof}
Figure~\ref{pf-lem-sharp} shows that 
a $v\sharp_2$-move is realized by a combination of a $v\sharp_1$-move 
and several generalized Reidemeister moves. 
Thus we have $v\sharp_1\Rightarrow v\sharp_2$. 
The proof of $v\sharp_2\Rightarrow v\sharp_1$ is 
obtained from the above sequence by 
changing crossing information at every real crossing. 
\end{proof}

\begin{figure}[htbp]
\centering
    \begin{overpic}[width=12cm]{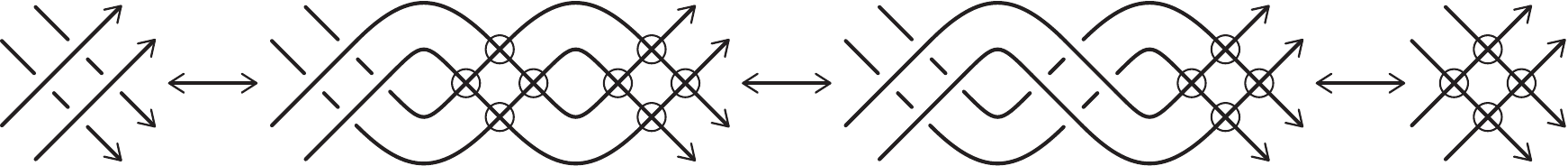}
      \put(43,22.3){R}
      \put(165,23.6){$v\sharp_{1}$}
      \put(293,22.3){R}
    \end{overpic}
  \caption{Proof of $v\sharp_1\Rightarrow v\sharp_2$}
  \label{pf-lem-sharp}
\end{figure}

\begin{lemma}\label{lem-cc2}
For any $i\in\{1,2\}$, 
a crossing change is realized by a $v\sharp_i$-move. 
\end{lemma}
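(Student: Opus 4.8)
The plan is to follow the same pattern as the proof of Lemma~\ref{lem-cc}, together with a reduction step coming from Lemma~\ref{lem-sharp}. By Lemma~\ref{lem-sharp} we have $v\sharp_1\Leftrightarrow v\sharp_2$; more precisely, a $v\sharp_1$-move is realized by a $v\sharp_2$-move up to generalized Reidemeister moves, and conversely. Hence if a crossing change is realized by a $v\sharp_1$-move and several generalized Reidemeister moves, then it is also realized by a $v\sharp_2$-move and several generalized Reidemeister moves, simply by composing the two sequences. So it is enough to treat the case $i=1$.

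For $i=1$, I would exhibit an explicit sequence of link diagrams, in the same spirit as Figure~\ref{pf-lem-cc-wedge}, that realizes a crossing change from a single $v\sharp_1$-move and several generalized Reidemeister moves. Concretely, starting from one side of a crossing change I would first apply generalized Reidemeister moves, inside a fixed disk, to modify the local picture into the left-hand side of a $v\sharp_1$-move; the extra arcs and virtual crossings needed to create the $\sharp$-shaped tangle are produced here, and the real crossings are arranged so that their signs match those required for type $v\sharp_1$. After performing the $v\sharp_1$-move, I would apply generalized Reidemeister moves again to cancel the now-superfluous arcs and virtual crossings, ending at the other side of the crossing change. Since the whole deformation is supported in a disk, this is a local tangle replacement and therefore realizes an arbitrary crossing change.

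The main obstacle is purely diagrammatic: one must choose the preparatory Reidemeister moves so that (a) the resulting real crossings are exactly those of a $v\sharp_1$-move, (b) the orientations of the strings make the oriented type genuinely $v\sharp_1$ rather than $v\sharp_2$, and (c) every auxiliary arc and virtual crossing introduced before the move can indeed be removed by generalized Reidemeister moves afterwards. Beyond producing a correct figure, no new idea is needed, as the argument runs exactly parallel to Lemma~\ref{lem-cc}.
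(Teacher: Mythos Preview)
Your proposal is correct and follows essentially the same approach as the paper: reduce to the case $i=1$ via Lemma~\ref{lem-sharp}, and then realize a crossing change by an explicit sequence consisting of generalized Reidemeister moves, a single $v\sharp_1$-move, and further generalized Reidemeister moves. The paper carries this out with a concrete figure (Figure~\ref{pf-lem-cc2}), which supplies exactly the diagrammatic verification you describe as the remaining obstacle.
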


\begin{proof}
The sequence in Figure~\ref{pf-lem-cc2} shows that 
a crossing change is realized by a combination of 
a $v\sharp_1$-move and several generalized Reidemeister moves. 
Therefore we have the conclusion by Lemma~\ref{lem-sharp}
\end{proof}

\begin{figure}[htbp]
  \centering
    \begin{overpic}[width=11cm]{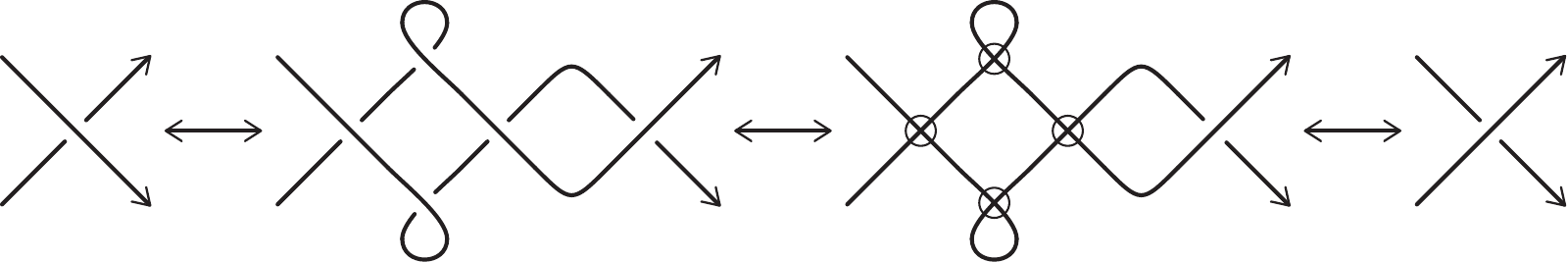}
      \put(39,30.3){R}
      \put(150,31.8){$v\sharp_{1}$}
      \put(266.5,30.3){R}
    \end{overpic}
  \caption{Proof of Lemma~\ref{lem-cc2} for $i=1$}
  \label{pf-lem-cc2}
\end{figure}

\begin{lemma}\label{lem-delta-sharp}
For any $i\in\{1,2\}$ and $j\in\{1,\dots,4\}$, 
we have 
$v\sharp_i\Leftrightarrow v\Delta_j^\wedge$. 
\end{lemma}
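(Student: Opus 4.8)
The plan is to reduce to a single pair of moves and then exhibit explicit diagrammatic realizations in both directions. By Lemma~\ref{lem-delta}(i) the four moves $v\Delta_1^\wedge,\dots,v\Delta_4^\wedge$ are mutually equivalent, and by Lemma~\ref{lem-sharp} the two moves $v\sharp_1$ and $v\sharp_2$ are equivalent. Hence it suffices to prove a single equivalence, say $v\sharp_1\Leftrightarrow v\Delta_1^\wedge$, and the general statement follows.

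For the implication $v\Delta_1^\wedge\Rightarrow v\sharp_1$, I would draw a sequence of link diagrams connecting the two tangles appearing in a $v\sharp_1$-move, in which each step is a generalized Reidemeister move (written $\stackrel{\rm R}{\longleftrightarrow}$), a crossing change (written $\stackrel{\rm cc}{\longleftrightarrow}$), or a $v\Delta_1^\wedge$-move. Since a crossing change is realized by a $v\Delta_1^\wedge$-move by Lemma~\ref{lem-cc}(i), such a sequence yields the implication. Concretely, the target tangle of a $v\sharp_1$-move can be produced from the trivial two-string tangle by inserting a small $\Delta$-type configuration and then virtualizing and cancelling the auxiliary real crossings, mimicking the classical fact that a $\sharp$-move factors through $\Delta$-moves. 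For the reverse implication $v\sharp_1\Rightarrow v\Delta_1^\wedge$, I would similarly produce a sequence realizing a $v\Delta_1^\wedge$-move as a composition of $v\sharp_1$-moves, crossing changes, and generalized Reidemeister moves; since a crossing change is realized by a $v\sharp_1$-move by Lemma~\ref{lem-cc2}, this closes the argument. The idea is to view the three strands involved in a $v\Delta_1^\wedge$-move, perform a $v\sharp_1$-move on a suitable pair of them after adjusting crossing information, and then remove the extra real and virtual crossings by Reidemeister moves.

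The main obstacle is the second construction, $v\sharp_1\Rightarrow v\Delta_1^\wedge$: a $v\sharp$-move alters two strands in a symmetric fashion, whereas a $v\Delta^\wedge$-move involves three strands, so one must carefully position the third strand as scaffolding and keep track of how the real crossings introduced by the $v\sharp$-move are cleaned up using the freedom afforded by crossing changes (themselves $v\sharp_1$-moves) and virtual Reidemeister moves. Once the correct family of intermediate diagrams is found, each individual step is a routine local verification, so the proof will consist essentially of two figures of the same style as Figures~\ref{pf-lem-cc-wedge}--\ref{pf-lem-delta3} together with appeals to Lemmas~\ref{lem-cc}(i), \ref{lem-sharp}, and~\ref{lem-cc2}.
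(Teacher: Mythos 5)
Your strategy coincides with the paper's: reduce via Lemmas~\ref{lem-delta}(i) and~\ref{lem-sharp} to the single equivalence $v\sharp_1\Leftrightarrow v\Delta_1^\wedge$, and then exhibit explicit local sequences in which the auxiliary crossing changes are absorbed by Lemma~\ref{lem-cc2} in one direction and by Lemma~\ref{lem-cc}(i) in the other. The only difference is that the paper actually supplies the two sequences (Figure~\ref{pf-lem-delta-sharp1}: a $v\Delta_1^\wedge$-move from one crossing change plus one $v\sharp_1$-move; Figure~\ref{pf-lem-delta-sharp2}: a $v\sharp_1$-move from two crossing changes plus a $v\Delta_1^\wedge$- and a $v\Delta_4^\wedge$-move, the virtual analogue of the classical decomposition of a $\sharp$-move into two $\Delta$-moves that you invoke), whereas your write-up leaves these figures as promissory notes; since they carry the entire content of the proof, they would still need to be drawn and verified.
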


\begin{proof}
$(\Rightarrow)$ By Lemmas~\ref{lem-delta}(i) and \ref{lem-sharp}, 
it is sufficient to prove 
$v\sharp_1\Rightarrow v\Delta_1^\wedge$. 
The sequence in Figure~\ref{pf-lem-delta-sharp1} shows that 
a $v\Delta_1^\wedge$-move is realized by a combination of 
a crossing change, a $v\sharp_1$-move, and 
several generalized Reidemeister moves. 
Therefore we have $v\sharp_1\Rightarrow v\Delta_1^\wedge$
by Lemma~\ref{lem-cc2}.

\begin{figure}[htbp]
\centering
    \begin{overpic}[width=12cm]{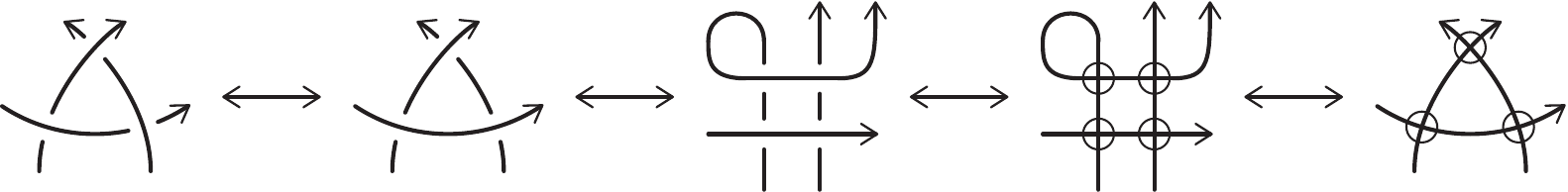}
      \put(54.5,25.2){cc}
      \put(132.5,25.2){R}
      \put(203,26.7){$v\sharp_{1}$}
      \put(278.5,25.2){R}
    \end{overpic}
  \caption{Proof of $v\sharp_1\Rightarrow v\Delta_1^{\wedge}$}
  \label{pf-lem-delta-sharp1}
\end{figure}

$(\Leftarrow)$
The sequence in Figure~\ref{pf-lem-delta-sharp2} shows that 
a $v\sharp_1$-move is realized by a combination of 
two crossing changes, a $v\Delta_1^\wedge$-move, 
a $v\Delta_4^\wedge$-move, and several generalized 
Reidemeister moves. 
Therefore we have $v\Delta_j^\wedge\Rightarrow v\sharp_i$ 
by Lemmas~\ref{lem-cc}(i), \ref{lem-delta}(i), 
and~\ref{lem-sharp}. 
\end{proof}

\begin{figure}[htbp]
\centering
    \begin{overpic}[width=12.5cm]{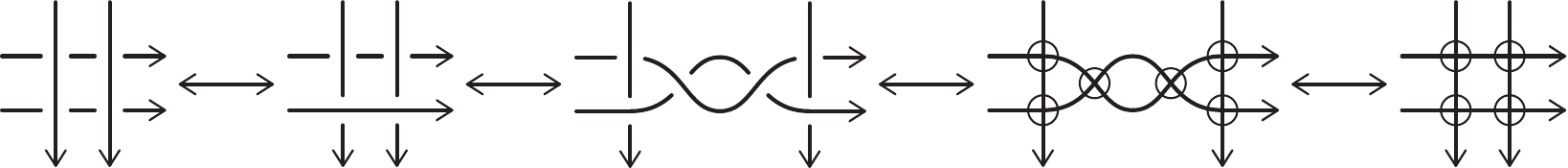}
      \put(46.8,23.5){cc}
      \put(113.5,23.5){R}
      \put(201,25.9){$v\Delta_{1}^{\wedge}$}
      \put(201,6.7){$v\Delta_{4}^{\wedge}$}
      \put(300.5,23.5){R}
    \end{overpic}
  \caption{A $v\sharp_{1}$-move is realized by $v\Delta_{1}^{\wedge}$- and $v\Delta_{4}^{\wedge}$-moves}
  \label{pf-lem-delta-sharp2}
\end{figure}

We are ready to prove Theorem~\ref{thm11}. 

\begin{proof}[Proof of {\rm Theorem~\ref{thm11}}]
\underline{(i)$\Leftrightarrow$(ii).} 
We have (i)$\Rightarrow$(ii) by Lemma~\ref{lem-delta}(iii), 
and (ii)$\Rightarrow$(i) by definition. 

\underline{(ii)$\Leftrightarrow$(iii).} 
This follows from Lemma~\ref{lem-delta-sharp} immediately. 

\underline{(i)$\Leftrightarrow$(iv).} 
This has been proved in~\cite[Theorem~1.5]{NNSW}. 
\end{proof}

\section{Proof of the equivalence of {\rm (i)} and {\rm (ii)} in Theorem~\ref{thm12}}\label{sec3}

A \textit{virtualized pass-move} or simply a \textit{vp-move} 
is a local move on a link diagram 
as shown in Figure~\ref{vpass}. 
There are four types of virtualized pass-moves 
labeled by $vp_1,\dots,vp_4$ as in the figure. 
We say that two oriented virtual links $L$ and $L'$ are 
\textit{$vp$-equivalent} 
if their diagrams are related by a finite sequence of 
$vp$-moves (up to generalized Reidemeister moves).

\begin{figure}[htbp]
\centering
    \begin{overpic}[width=10cm]{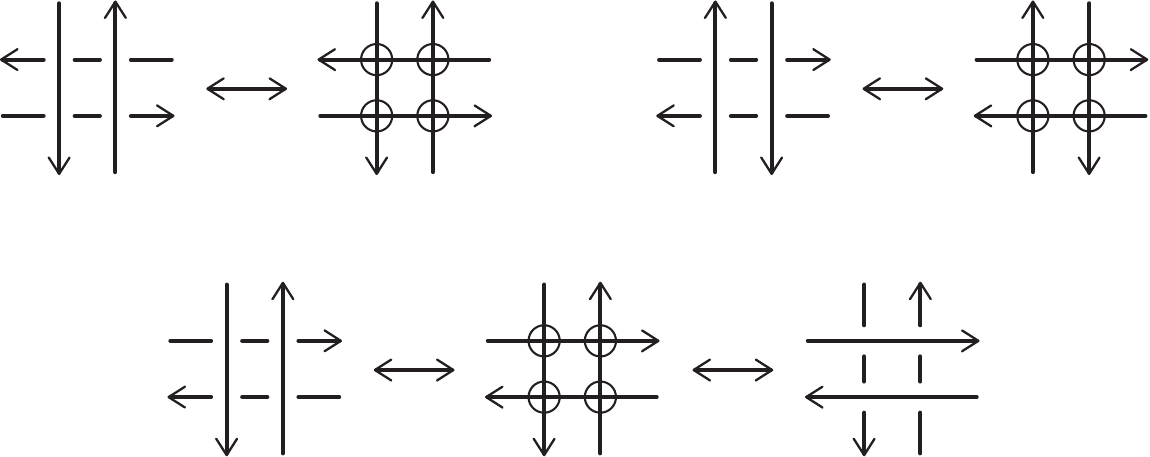}
      \put(54.5,97.5){$vp_{1}$}
      \put(216.5,97.5){$vp_{2}$}
      \put(95.5,28){$vp_{3}$}
      \put(174,28){$vp_{4}$}
    \end{overpic}
  \caption{Virtualized pass-moves}
  \label{vpass}
\end{figure}

\begin{lemma}\label{lem-pass}
For any $i\ne j\in\{1,\dots,4\}$, 
we have $vp_i\Leftrightarrow vp_j$. 
More precisely, a $vp_i$-move is realized by a $vp_j$-move. 
\end{lemma}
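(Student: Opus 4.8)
The plan is to argue diagrammatically, following the template of the proof of Lemma~\ref{lem-sharp}. For a fixed pair $i\ne j$ I would produce a figure whose rows display a sequence that begins at one side of a $vp_i$-move, applies a succession of generalized Reidemeister moves (in particular detour moves, which reroute the virtual arcs freely), arrives at a local tangle to which a single $vp_j$-move applies, performs that $vp_j$-move, and then returns to the other side of the $vp_i$-move by further generalized Reidemeister moves. Reading such a figure backwards proves the reverse implication, so a single figure settles an unordered pair, and this is exactly the form of conclusion --- ``a $vp_i$-move is realized by a $vp_j$-move'' --- asserted by the lemma.

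Before drawing anything I would cut down the number of cases using the symmetries of the local picture of a virtualized pass-move: rotating the picture by $\pi$ interchanges the two bands of the pass-move, and reversing the orientations of all the strings simultaneously is a symmetry of each $vp$-move. Applying one of these operations to an entire sequence of moves converts a proof of one implication into a proof of another, so that only one or two genuinely different figures have to be drawn and the remaining cases follow formally. This is the same reduction used for $v\sharp_1\Leftrightarrow v\sharp_2$ and for the cycle in Lemma~\ref{lem-delta}(i), now applied to four types rather than two.

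If the direct figures turn out to be unwieldy, the fallback I would keep in reserve is the route taken in Lemma~\ref{lem-delta}: a figure of that kind would realize a $vp_j$-move as the composite of an ordinary pass-move, a $vp_i$-move, and generalized Reidemeister moves. Since an ordinary pass-move is itself a composite of crossing changes and generalized Reidemeister moves (pulling one band through another), this reduces the lemma to the assertion that a crossing change is realized by a $vp_i$-move, which I would establish by a short figure analogous to Figure~\ref{pf-lem-cc2}. This variant proves only the weaker ``finite sequence'' statement, so I would aim for the direct figures first and use this as a safety net.

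The step I expect to be the main obstacle is the interaction with orientations. Two $vp$-moves whose pictures differ by reversing the orientation of a single band are not related by generalized Reidemeister moves alone --- local orientation reversal is impossible --- so the figure cannot reduce to a pure detour-move manipulation and must genuinely invoke the $vp_j$-move to change the configuration. Concretely, the delicate point is to route the virtual arcs so that, after the single application of $vp_j$, the real and virtual crossings left around it cancel by R2 and the mixed moves to exactly the target tangle of the $vp_i$-move; verifying this cancellation is the one place where care is needed, and it is precisely what the figures are constructed to exhibit.
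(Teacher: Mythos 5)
Your primary plan is exactly the paper's proof: Figure~\ref{pf-lem-pass} displays sequences of the form $\mathrm{R} \to vp_1 \to \mathrm{R} \to \mathrm{R}$ realizing a $vp_i$-move ($i=2,3$) by a single $vp_1$-move, the remaining cases being ``proved similarly,'' and reading the figures backwards (together with transitivity of single-move realizations) gives all pairs. Your symmetry reduction and the weaker fallback via crossing changes are reasonable but unnecessary; the direct figures suffice, just as you anticipated.
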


\begin{proof}
Figure~\ref{pf-lem-pass} shows that 
a $vp_i$-move $(i=2,3)$ is realized by a combination of a $vp_1$-move 
and several generalized Reidemeister moves. 
The other cases are proved similarly. 
\end{proof}

\begin{figure}[htbp]
\vspace{1em}
\centering
    \begin{overpic}[width=12cm]{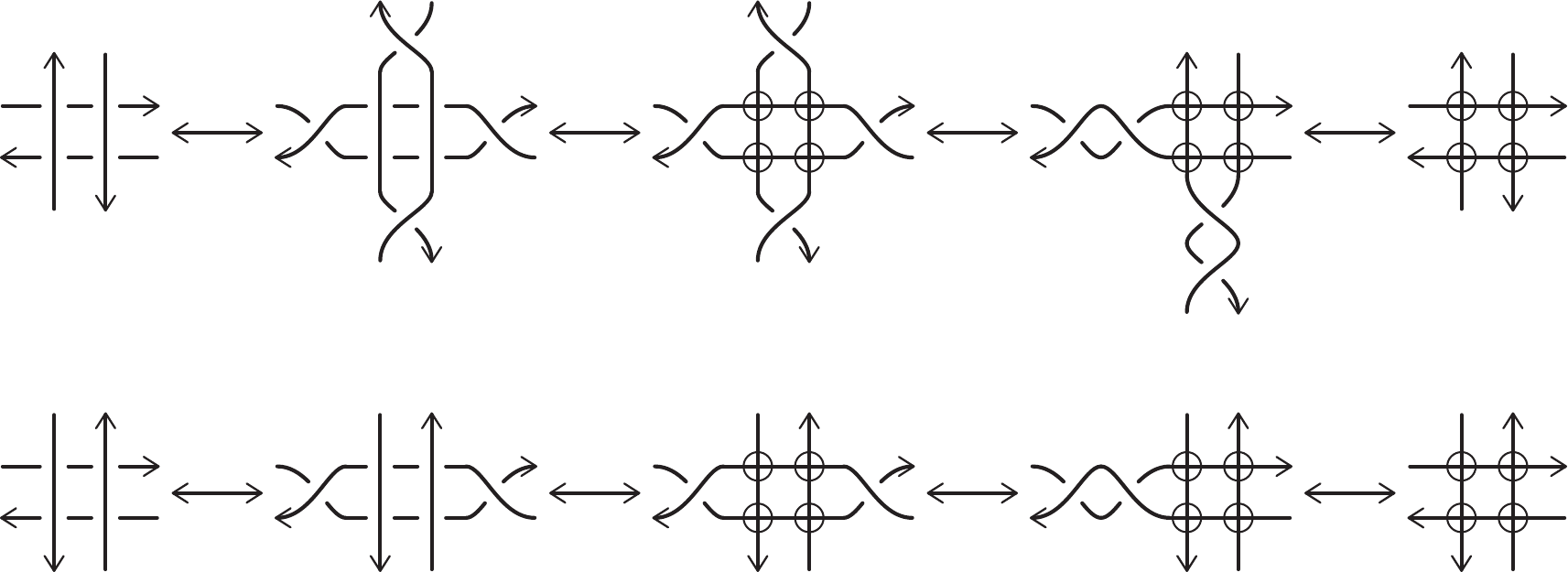}
      \put(0,130){\underline{$vp_{1}\Rightarrow vp_{2}$}}
      \put(0,52){\underline{$vp_{1}\Rightarrow vp_{3}$}}
      \put(43.5,99.9){R}
      \put(122.5,101.4){$vp_{1}$}
      \put(208.5,99.9){R}
       \put(290.7,99.9){R}
      \put(43.5,21.5){R}
      \put(122.5,23){$vp_{1}$}
      \put(208.5,21.5){R}
      \put(290.7,21.5){R}
    \end{overpic}
  \caption{A $vp_i$-move $(i=2,3)$ is realized by a $vp_1$-move}
  \label{pf-lem-pass}
\end{figure}

\begin{lemma}\label{lem-cc3}
For any $i\in\{1,\dots,4\}$, 
a crossing change is realized by a $vp_i$-move. 
\end{lemma}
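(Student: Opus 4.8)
The plan is to imitate the strategy already used for Lemmas~\ref{lem-cc} and~\ref{lem-cc2}: exhibit a single explicit local picture showing that a crossing change can be obtained from one $vp_i$-move together with a sequence of generalized Reidemeister moves, and then invoke Lemma~\ref{lem-pass} to upgrade from one particular index $i$ to all of $i\in\{1,\dots,4\}$. Since Lemma~\ref{lem-pass} gives $vp_1\Leftrightarrow vp_j$ for every $j$, it suffices to treat a convenient single type; I would work with $vp_1$.

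Concretely, first I would start from a diagram containing a clasp (two parallel strands crossing twice with, say, the same sign, the target of the crossing change sitting at one of those two real crossings). Pulling one strand of the clasp across creates a configuration matching the left-hand side of the $vp_1$-move in Figure~\ref{vpass}: the pass-move involves two strands passing over/under a third pair in the prescribed orientation pattern, with all four new crossings being real crossings of the appropriate signs for $vp_1$. Applying the $vp_1$-move replaces that tangle by its virtualized counterpart, and then a short sequence of generalized Reidemeister moves (a detour move to slide the virtual crossings out of the way, followed by Reidemeister~II cancellations to remove the now-freed real crossings) collapses the picture back to the original diagram with exactly the one real crossing switched. This is exactly the shape of argument drawn in Figures~\ref{pf-lem-cc2} and~\ref{pf-lem-cc-wedge}, and in the paper this step will be carried by a single figure (call it Figure~\ref{pf-lem-cc3}) with the arrows labeled $\mathrm{R}$, $vp_1$, $\mathrm{R}$. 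Having established that a crossing change is realized by a $vp_1$-move, Lemma~\ref{lem-pass} immediately gives the same for $vp_2,vp_3,vp_4$.

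The one point that needs genuine care — the "main obstacle" — is checking that the orientations and crossing signs of the strands in the clasp-to-pass-move transition actually match one of the four oriented/signed templates in Figure~\ref{vpass}, rather than producing a configuration that is not a legal $vp$-move. Unlike the unoriented or purely sign-agnostic situation, here both the strand orientations and the over/under data at all four real crossings are constrained by the definition of $vp_i$, so the local picture must be drawn so that, after the preparatory Reidemeister moves, the resulting tangle is literally the left side of (some) $vp_i$-move. Once the figure is drawn correctly this is a visual check, but it is the place where one must be attentive; choosing $vp_1$ (where both real crossings in the template have the same sign) makes the clasp construction the natural one and keeps the bookkeeping minimal. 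After that, the rest is routine: detour moves plus Reidemeister~II, then cite Lemma~\ref{lem-pass}.
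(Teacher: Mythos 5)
Your proposal follows the paper's proof exactly: the paper likewise exhibits a single figure realizing a crossing change by one $vp_1$-move sandwiched between generalized Reidemeister moves, and then cites Lemma~\ref{lem-pass} to transfer the statement to $vp_2,\dots,vp_4$. The only slip is your aside that the $vp_1$ template has ``both real crossings'' of the same sign --- a $vp$-move involves four real crossings, two positive and two negative (as used in the proof of Proposition~\ref{prop-lower-odd}) --- but this detail of the figure does not affect the validity of the strategy.
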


\begin{proof}
The sequence in Figure~\ref{pf-lem-cc3} shows that a crossing change 
is realized by a combination of a $vp_1$-move and several generalized 
Reidemeister moves. 
Therefore we have the conclusion by Lemma~\ref{lem-pass}. 
\end{proof}

\begin{figure}[htbp]
  \centering
    \begin{overpic}[width=11cm]{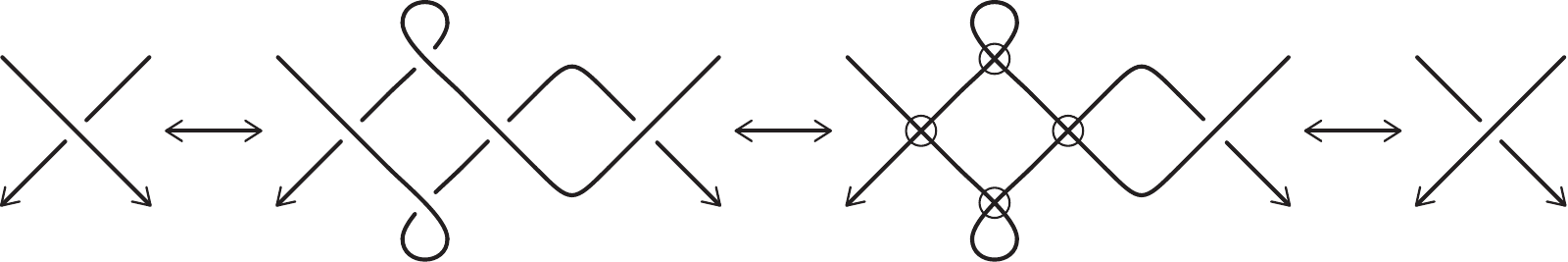}
      \put(39,30.3){R}
      \put(150,31.8){$vp_{1}$}
      \put(266.5,30.3){R}    
    \end{overpic}
  \caption{Proof of Lemma~\ref{lem-cc3} for $i=1$}
  \label{pf-lem-cc3}
\end{figure}

\begin{lemma}\label{lem-delta-pass}
For any $i, j\in\{1,\dots,4\}$, 
we have $vp_i\Leftrightarrow v\Delta_j^\circ$. 
\end{lemma}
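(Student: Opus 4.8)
The plan is to prove the two directions of $vp_i\Leftrightarrow v\Delta_j^\circ$ separately, reducing in each case to a single well-chosen pair of indices by appealing to the equivalences already established. For the forward direction $(\Rightarrow)$, by Lemmas~\ref{lem-pass} and \ref{lem-delta}(ii) it suffices to show that a single move, say a $v\Delta_1^\circ$-move, is realized by a combination of $vp_1$-moves, crossing changes, and generalized Reidemeister moves; the crossing changes are then absorbed using Lemma~\ref{lem-cc3}. First I would exhibit a diagrammatic sequence (in the style of Figures~\ref{pf-lem-delta3} and~\ref{pf-lem-delta-sharp1}) in which the three parallel strands of the $v\Delta_1^\circ$ tangle are rearranged: apply a $vp_1$-move to create a configuration differing from the target by some real crossings, then undo those crossings by crossing changes, cleaning up with Reidemeister moves. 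This gives $vp_1\Rightarrow v\Delta_1^\circ$, hence $vp_i\Rightarrow v\Delta_j^\circ$ for all $i,j$.

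For the reverse direction $(\Leftarrow)$, by the same two lemmas it suffices to show that a $vp_1$-move is realized by a combination of $v\Delta_j^\circ$-moves (for appropriate $j$), crossing changes, and generalized Reidemeister moves, after which Lemma~\ref{lem-cc}(ii) handles the crossing changes. Here I would produce a sequence analogous to Figure~\ref{pf-lem-delta-sharp2}: starting from one side of the $vp_1$-move, perform a few crossing changes to put the tangle in a form where a $v\Delta^\circ$-move (or a short cascade of them, of possibly different types among $v\Delta_1^\circ,\dots,v\Delta_4^\circ$) transforms it into the other side, up to Reidemeister moves. Combining both directions yields $vp_i\Leftrightarrow v\Delta_j^\circ$ for all $i,j\in\{1,\dots,4\}$.

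The main obstacle I expect is the bookkeeping in the reverse direction: a $vp$-move involves two strands passing over/under a third in a pattern that is genuinely ``pass-like'' rather than ``$\Delta$-like,'' so realizing it from $v\Delta^\circ$-moves will likely require several $v\Delta^\circ$-moves of more than one type together with auxiliary crossing changes, and keeping track of which real crossings survive at each stage (so that the final tally of crossing changes is accounted for by Lemma~\ref{lem-cc}(ii)) is the delicate part. The forward direction should be comparatively routine, since a single $vp$-move already moves an entire strand past a clasp and only a bounded correction by crossing changes is needed. In both cases the verification is purely diagrammatic and is best presented via labelled figures with the arrows annotated by $\stackrel{\rm R}{\longleftrightarrow}$, $\stackrel{\rm cc}{\longleftrightarrow}$, and the relevant $vp$- or $v\Delta^\circ$-labels, exactly as in the preceding lemmas.
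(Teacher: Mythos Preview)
Your plan matches the paper's proof almost exactly: the paper reduces $(\Rightarrow)$ to showing $vp_1\Rightarrow v\Delta_1^\circ$ via one crossing change, one $vp_1$-move, and Reidemeister moves (absorbing the crossing change with Lemma~\ref{lem-cc3}), and reduces $(\Leftarrow)$ to realizing a $vp_1$-move by two crossing changes together with one $v\Delta_1^\circ$-move and one $v\Delta_3^\circ$-move (absorbing the crossing changes with Lemma~\ref{lem-cc}(ii)). Your anticipation that the reverse direction needs $v\Delta^\circ$-moves of more than one type is exactly right, and the figures you propose are precisely the analogues of Figures~\ref{pf-lem-delta-sharp1} and~\ref{pf-lem-delta-sharp2} that the paper provides.
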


\begin{proof}
$(\Rightarrow)$ 
By Lemmas~\ref{lem-delta}(ii) and \ref{lem-pass}, 
it is sufficient to prove 
$vp_1\Rightarrow v\Delta_1^\circ$. 
The sequence in Figure~\ref{pf-lem-delta-pass1} shows that 
a $v\Delta_1^\circ$-move is realized by a combination of 
a crossing change, a $vp_1$-move, and 
several generalized Reidemeister moves. 
Therefore we have $vp_1\Rightarrow v\Delta_1^\circ$
by Lemma~\ref{lem-cc3}. 

\begin{figure}[htbp]
\centering
    \begin{overpic}[width=12cm]{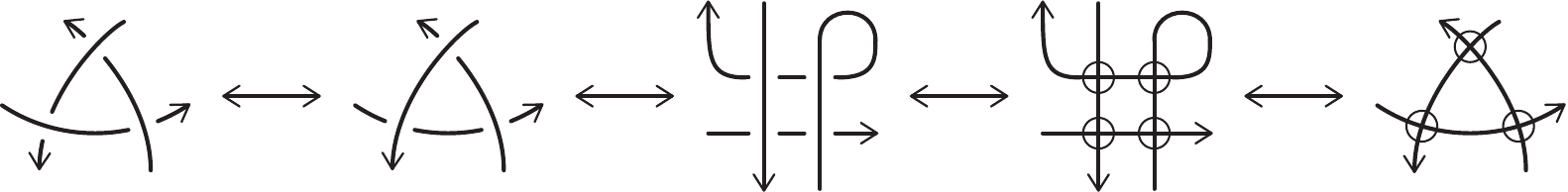}
      \put(54.5,25.2){cc}
      \put(132.5,25.2){R}
      \put(203,26.7){$vp_{1}$}
      \put(278.5,25.2){R}
    \end{overpic}
  \caption{Proof of $vp_1\Rightarrow v\Delta_1^{\circ}$}
  \label{pf-lem-delta-pass1}
\end{figure}

$(\Leftarrow)$ 
The sequence in Figure~\ref{pf-lem-delta-pass2} shows that 
a $vp_1$-move is realized by a combination of 
two crossing changes, a $v\Delta_1^\circ$-move, 
a $v\Delta_3^\circ$-move, and several generalized 
Reidemeister moves. 
Therefore we have $v\Delta_j^\circ\Rightarrow vp_i$
by Lemmas~\ref{lem-cc}(ii), \ref{lem-delta}(ii), 
and~\ref{lem-pass}. 
\end{proof}

\begin{figure}[t]
\centering
    \begin{overpic}[width=12.5cm]{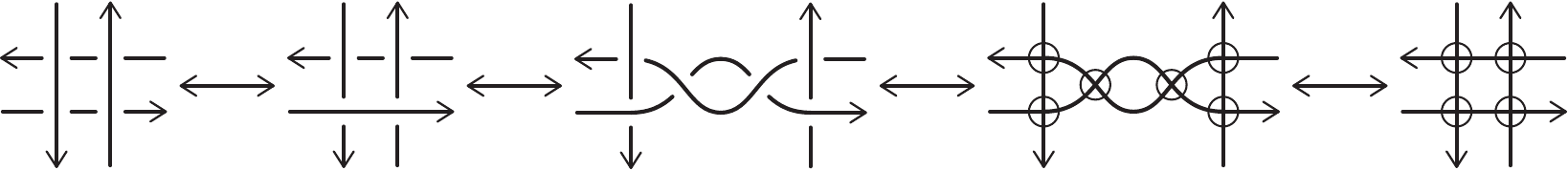}
      \put(46.8,23.5){cc}
      \put(113.5,23.5){R}
      \put(201,25.9){$v\Delta_{1}^{\circ}$}
      \put(201,6.7){$v\Delta_{3}^{\circ}$}
      \put(300.5,23.5){R}
    \end{overpic}
  \caption{A $vp_{1}$-move is realized by $v\Delta_{1}^{\circ}$- and $v\Delta_{3}^{\circ}$-moves}
  \label{pf-lem-delta-pass2}
\end{figure}

\begin{proof}[Proof of {\rm (i)$\Leftrightarrow$(ii) in Theorem~\ref{thm12}}]
This follows from Lemma~\ref{lem-delta-pass} immediately. 
\end{proof}

\section{Proof of the equivalence of {\rm (i)} and {\rm (iii)} in Theorem~\ref{thm12}}\label{sec4}

A \textit{Gauss diagram} of an oriented $n$-component link diagram is 
a union of $n$ oriented circles regarded as the preimage of 
the immersed circles with chords connecting two points in 
the preimage of each real crossing. 
Each chord is equipped with the sign of the corresponding real crossing, 
and it is oriented from the overcrossing to the undercrossing.  

A $v\Delta_i^\circ$-move $(i=1,\dots,4)$ on a link diagram 
is described by deleting/adding three chords on a Gauss diagram 
as shown in Figure~\ref{vDelta-circ-Gauss}, 
where the signs of the chords are the same.

\begin{figure}[htbp]
\centering
    \begin{overpic}[width=11cm]{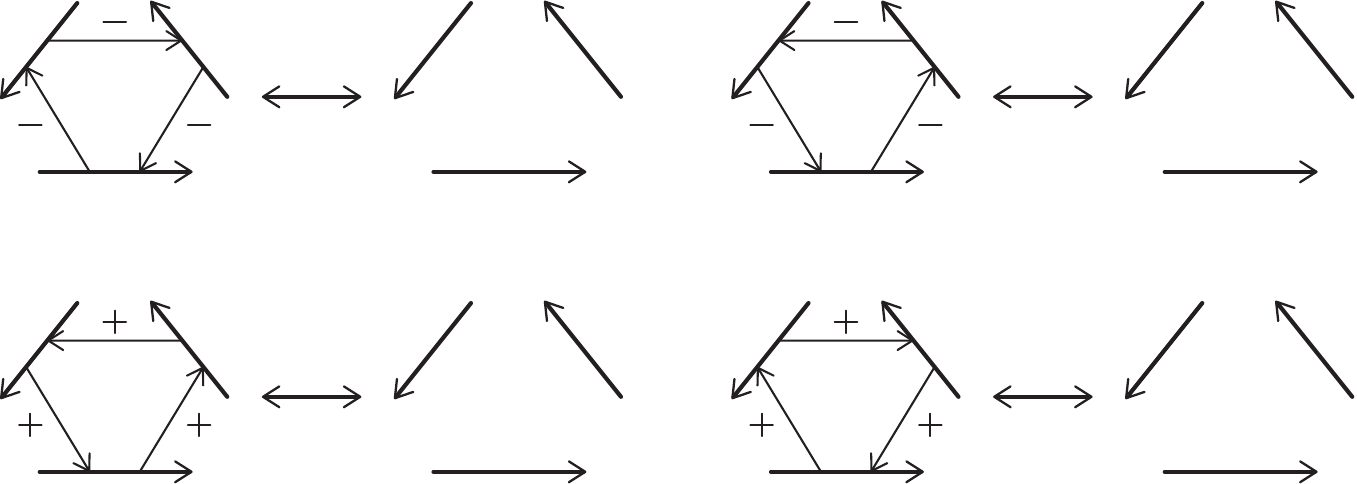}
      \put(63,96.3){$v\Delta_{1}^{\circ}$}
      \put(231,96.3){$v\Delta_{2}^{\circ}$}
      \put(63,27.2){$v\Delta_{3}^{\circ}$}
      \put(231,27.2){$v\Delta_{4}^{\circ}$}
    \end{overpic}
  \caption{A $v\Delta_{i}^{\circ}$-move $(i=1,\dots,4)$ on a Gauss diagram}
  \label{vDelta-circ-Gauss}
\end{figure}

A \textit{forbidden detour move}~\cite{CMG,YI} 
or a \textit{fused move}~\cite{ABMW} on a link diagram 
is described by exchanging the positions of 
two consecutive initial and terminal endpoints of chords 
on a Gauss diagram. 
There are four types according to the signs of the chords, 
where we label them by $FD_1,\dots,FD_4$ 
as shown in Figure~\ref{fd-Gauss}.

\begin{figure}[t]
\centering
    \begin{overpic}[width=11cm]{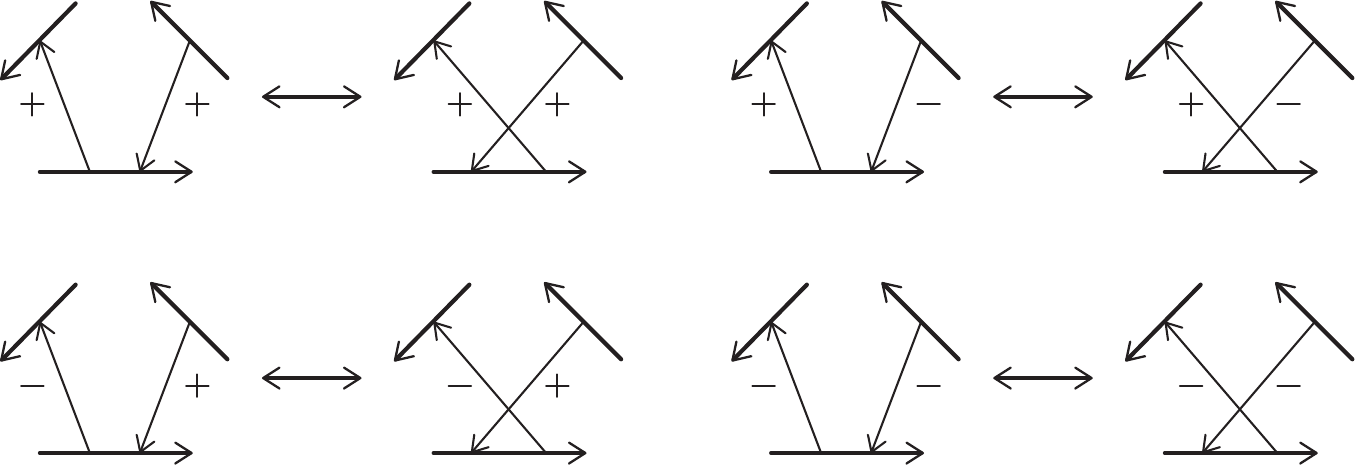}
      \put(62,90.75){$FD_{1}$}
      \put(231,90.75){$FD_{2}$}
      \put(62,26){$FD_{3}$}
      \put(231,26){$FD_{4}$}
    \end{overpic}
  \caption{Forbidden detour moves on Gauss diagrams}
  \label{fd-Gauss}
\end{figure}

\begin{lemma}\label{lem-fd}
For any $i\ne j\in\{1,\dots,4\}$, 
we have 
$FD_i\Leftrightarrow FD_j$. 
\end{lemma}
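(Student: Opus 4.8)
The plan is to mimic the structure already used in this paper for the analogous statements about $v\Delta$-, $v\sharp$-, and $vp$-moves (Lemmas~\ref{lem-delta}(i), \ref{lem-sharp}, \ref{lem-pass}): establish a cycle of implications among the four forbidden detour moves so that each is obtainable from any other. Concretely, I would prove
\[
FD_1\Rightarrow FD_2\Rightarrow FD_4\Rightarrow FD_3\Rightarrow FD_1
\]
(or any convenient $4$-cycle), and then the two-sided equivalence $FD_i\Leftrightarrow FD_j$ for all pairs follows by transitivity. Each single implication will be exhibited by a Gauss-diagram figure analogous to Figure~\ref{pf-lem-pass}: the move on the left-hand side of the arrow is realized by a combination of one move of the target type together with several generalized Reidemeister moves, which on the Gauss diagram amount to the allowed chord manipulations (trivial-chord cancellation and the permitted detour moves of type $\stackrel{\rm R}{\longleftrightarrow}$). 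In each case the ambient chords surrounding the four endpoints being swapped are reoriented to convert one sign pattern into the next, exactly the trick used to pass between $vp_1$ and $vp_2$ in Figure~\ref{pf-lem-pass}.

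In carrying this out, the first step is to fix, once and for all, the precise pictures of $FD_1,\dots,FD_4$ from Figure~\ref{fd-Gauss}, noting which endpoints are initial (tail) and which are terminal (head), and what the signs on the two chords are in each case. Second, I would produce the figure for $FD_1\Rightarrow FD_2$: here the two chords differ only in the sign of one of them, so the bridge is a local alteration that changes one chord's sign, realized by a Reidemeister-type detour combined with an $FD_2$-move; this is the direct Gauss-diagram analogue of reversing the orientation of one strand in the $vp_1\Rightarrow vp_2$ picture. Third, the implications $FD_2\Rightarrow FD_4$ and $FD_4\Rightarrow FD_3$ and $FD_3\Rightarrow FD_1$ are handled by the same mechanism with the roles of the two chords and of the head/tail distinctions permuted; by symmetry of the four diagrams under reversing the circle orientation and under swapping the two chords, these reduce to the first computation up to relabeling, so I would simply indicate the figure and say "the other cases are proved similarly," as the authors do for $vp$.

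The main obstacle I anticipate is purely bookkeeping rather than conceptual: one must be careful that the auxiliary chords introduced to flip a sign are genuinely removable (i.e.\ they form a trivial pair that cancels by a Reidemeister~II-type move on the Gauss diagram) and that the intermediate configurations are honest Gauss diagrams of link diagrams, so that every intermediate step is legitimate. A secondary subtlety is that an $FD$-move is defined on a Gauss diagram, so I should make sure the reorientation trick used for $vp$-moves on actual diagrams translates faithfully to chord-level language; since the paper has just established the Gauss-diagram dictionary (Figures~\ref{vDelta-circ-Gauss} and~\ref{fd-Gauss}), this translation is routine. No deep input is needed beyond the generalized Reidemeister moves already in play, so the proof will be short and figure-driven, closing with the observation that the $4$-cycle of implications yields $FD_i\Leftrightarrow FD_j$ for all $i\ne j$.
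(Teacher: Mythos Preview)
Your proposal is correct and follows essentially the same route as the paper: the authors also prove the cycle $FD_1\Rightarrow FD_2\Rightarrow FD_4\Rightarrow FD_3\Rightarrow FD_1$, realizing each implication by a single $FD$-move of the source type sandwiched between two Reidemeister~II moves (the ``trivial-chord cancellation'' you anticipate). The only refinement worth noting is that the paper packages the four implications into two parametrized sequences: one figure with a sign parameter $\varepsilon$ yields $FD_1\Rightarrow FD_2$ for $\varepsilon=+1$ and $FD_4\Rightarrow FD_3$ for $\varepsilon=-1$, and a second figure yields $FD_3\Rightarrow FD_1$ and $FD_2\Rightarrow FD_4$ similarly---a bookkeeping economy rather than a different idea.
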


\begin{proof}
It is sufficient to prove 
\[FD_1\Rightarrow FD_2\Rightarrow FD_4\Rightarrow FD_3
\Rightarrow FD_1.\]
The sequence in the top of Figure~\ref{pf-lem-fd} 
shows $FD_1\Rightarrow FD_2$ for $\varepsilon=+1$ 
and $FD_4\Rightarrow FD_3$ for $\varepsilon=-1$. 
We remark that two Reidemeister moves II 
appear in this sequence. 
Similarly, the sequence in the bottom of the figure 
shows $FD_3\Rightarrow FD_1$ for $\varepsilon=+1$ 
and $FD_2\Rightarrow FD_4$ for $\varepsilon=-1$. 
\end{proof}

\begin{figure}[htbp]
\centering
\vspace{2em}
    \begin{overpic}[width=12cm]{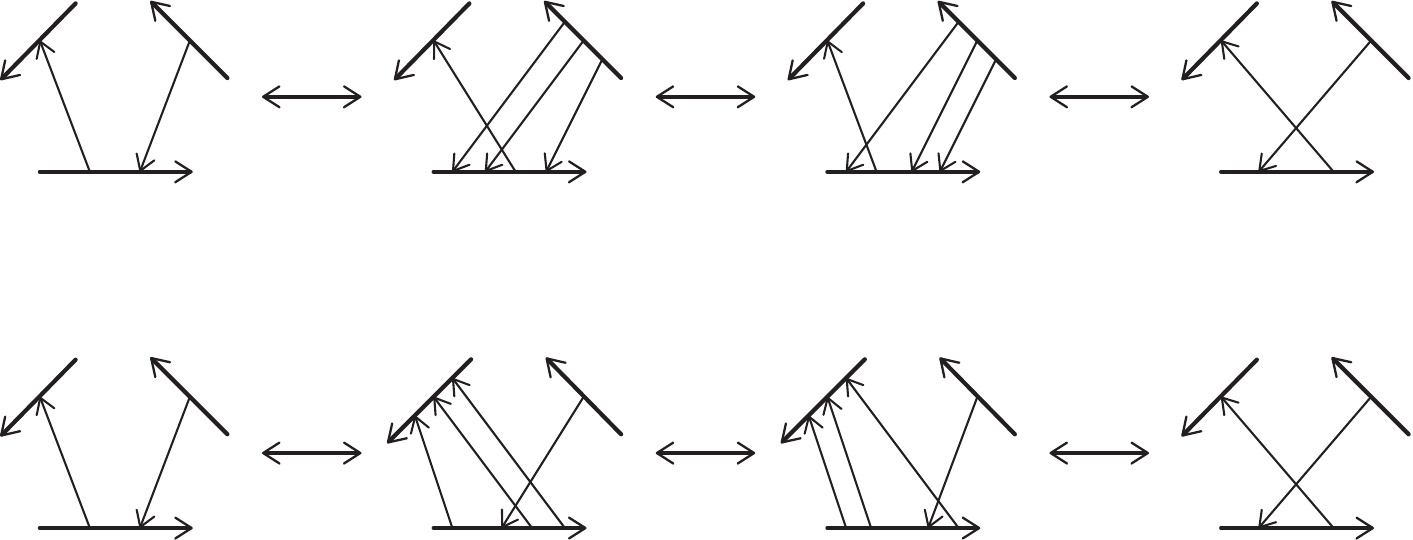}
      \put(0,144.5){\underline{$FD_1\Rightarrow FD_2$ for $\varepsilon=+1$ 
and $FD_4\Rightarrow FD_3$ for $\varepsilon=-1$}}
      \put(160.5,122){$FD_{1}$}
      \put(155,113){\scriptsize{$(\e=+1)$}}
      \put(160.5,95.5){$FD_{4}$}
      \put(155,86.5){\scriptsize{$(\e=-1)$}}
      \put(72.3,112){R}
      \put(262.3,112){R}
      \put(7,104){$\e$}
      \put(43.5,104){$-\e$}
      \put(104,105){$\e$}
      \put(116,117){$-\e$}
      \put(129,98){$\e$}
      \put(139.5,98){$-\e$}
      \put(197,105){$\e$}
      \put(211,117){$-\e$}
      \put(218.5,98){$\e$}
      \put(235.5,98){$-\e$}
      \put(296,104){$\e$}
      \put(323,104){$-\e$}
      \put(0,55){\underline{$FD_3\Rightarrow FD_1$ for $\varepsilon=+1$ 
and $FD_2\Rightarrow FD_4$ for $\varepsilon=-1$}}
      \put(160.5,36){$FD_{3}$}
      \put(155,27){\scriptsize{$(\e=+1)$}}
      \put(160.5,9.4){$FD_{2}$}
      \put(155,0.4){\scriptsize{$(\e=-1)$}}
      \put(72.3,25.9){R}
      \put(262.3,25.9){R}
      \put(7,17){$\e$}
      \put(45,17){$\e$}
      \put(97,9){$\e$}
      \put(107,9){$-\e$}
      \put(121,27){$\e$}
      \put(135,17){$\e$}
      \put(193.5,9){$\e$}
      \put(208.5,9){$-\e$}
      \put(217,27){$\e$}
      \put(234,17){$\e$}
      \put(296,17){$\e$}
      \put(324,17){$\e$}
    \end{overpic}
  \caption{Proof of Lemma~\ref{lem-fd}}
  \label{pf-lem-fd}
\end{figure}

\begin{lemma}\label{lem-delta-fd}
For any $i,j\in\{1,\dots,4\}$, 
we have 
$v\Delta_i^\circ \Rightarrow FD_j$. 
\end{lemma}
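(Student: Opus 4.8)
The plan is to reduce to a single normalized case and then to exhibit one explicit sequence on Gauss diagrams. Since the relation $\Rightarrow$ is transitive, Lemma~\ref{lem-delta}(ii) lets us replace $v\Delta_i^\circ$ by $v\Delta_1^\circ$ and Lemma~\ref{lem-fd} lets us replace $FD_j$ by $FD_1$; so it suffices to prove $v\Delta_1^\circ\Rightarrow FD_1$ for one convenient choice of signs, after which $v\Delta_i^\circ\Rightarrow v\Delta_1^\circ\Rightarrow FD_1\Rightarrow FD_j$ for all $i,j$.

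I would argue entirely on the level of Gauss diagrams, using the following operations, all of which are available inside a chain of $v\Delta_1^\circ$-moves: adding or deleting three equal-signed chords in one of the admissible $v\Delta^\circ$ patterns of Figure~\ref{vDelta-circ-Gauss}; a crossing change, which by Lemma~\ref{lem-cc}(ii) is itself realized by a $v\Delta_i^\circ$-move and which on a Gauss diagram reverses a chord's arrow together with its sign; and all generalized Reidemeister moves, i.e.\ creating or removing an isolated chord, creating or removing an adjacent opposite-signed parallel pair, or performing the triangle exchange. The heart of the proof is then a single picture drawn in a disk where the link diagram looks like an $FD_1$-configuration: starting from the two chords whose adjacent endpoints are to be exchanged, one introduces three equal-signed chords by a $v\Delta_1^\circ$-move positioned so that, after a crossing change, two of the chords present form an opposite-signed parallel pair straddling the two target endpoints; removing that pair by a Reidemeister~II move is the step that actually carries the two endpoints past each other, and a short cleanup by triangle exchanges and crossing changes (followed if necessary by a second $v\Delta_1^\circ$-move to discard a leftover triple) returns the picture to one with the same two chords but with their endpoints interchanged, that is, to $FD_1$.

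The step I expect to be the main obstacle is the orientation-and-sign bookkeeping inside that picture. One must choose the chords introduced by the first $v\Delta_1^\circ$-move so that they genuinely form an admissible $v\Delta^\circ$ pattern — coherent cyclic orientations and all signs equal — and simultaneously arrange that every later $v\Delta^\circ$-move still acts on an admissible pattern, all while keeping exactly one initial and one terminal endpoint in the disk and leaving them in the swapped order with the signs prescribed by $FD_1$. The tension is that a Reidemeister~II cancellation forces the cancelled chords to carry opposite signs, whereas each $v\Delta^\circ$-move insists that its three chords have equal signs, so the crossing changes have to be slotted in at exactly the right moments to mediate between the two constraints. Once a diagram reconciling all of this is drawn, checking it is a purely local and routine matter, so the lemma ultimately reduces to producing that one diagram.
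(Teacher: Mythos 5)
Your reduction is exactly the paper's: by transitivity, Lemma~\ref{lem-delta}(ii) and Lemma~\ref{lem-fd} reduce the whole statement to realizing a single $FD$-move from $v\Delta^\circ$-moves, and all the operations you list (the $v\Delta^\circ$ chord patterns, crossing changes via Lemma~\ref{lem-cc}(ii), and generalized Reidemeister moves on Gauss diagrams) are legitimately available. The problem is that you stop precisely where the proof begins. The entire mathematical content of this lemma is the explicit local sequence of Gauss diagrams, and you do not exhibit it: you describe its intended shape, flag the ``orientation-and-sign bookkeeping'' as the main obstacle, and then assert that once a diagram reconciling everything is drawn the rest is routine. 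That is a statement of the problem, not a solution to it. Phrases such as ``a short cleanup by triangle exchanges and crossing changes (followed if necessary by a second $v\Delta_1^\circ$-move to discard a leftover triple)'' leave open exactly the questions a reader needs answered --- which chords are introduced, with which signs and orientations, in which cyclic positions, and why each intermediate triple is an admissible $v\Delta^\circ$ pattern. Without that verification the proof is incomplete.

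For comparison, the paper's Figure~\ref{pf-lem-delta-fd} supplies the missing construction as a four-step sequence: a Reidemeister move II introducing an adjacent opposite-signed pair, a $v\Delta_4^\circ$-move, a $v\Delta_3^\circ$-move, and a second Reidemeister move II, with no crossing changes at all. The sign tension you correctly identify --- Reidemeister II cancellation wants opposite signs while each $v\Delta^\circ$-move wants three equal signs --- is resolved there not by inserting crossing changes but by using two \emph{different} types of $v\Delta^\circ$-move, one to create a triple of one sign and one to delete a triple of the other, so that the leftover chords pair off with opposite signs. Your route via crossing changes is not wrong in principle (each crossing change is itself a $v\Delta^\circ$-move by Lemma~\ref{lem-cc}(ii), so it costs nothing for the $\Rightarrow$ relation), but until you actually draw and check the sequence, the lemma remains unproved.
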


\begin{proof}
The sequence in Figure~\ref{pf-lem-delta-fd} shows that 
an $FD_1$-move is realized by a combination of 
a $v\Delta_4^\circ$-move, a $v\Delta_3^\circ$-move, 
and two Reidemeister moves II. 
By Lemmas~\ref{lem-delta}(ii) and \ref{lem-fd}, 
we have the conclusion. 
\end{proof}

\begin{figure}[htbp]
\centering
    \begin{overpic}[width=10cm]{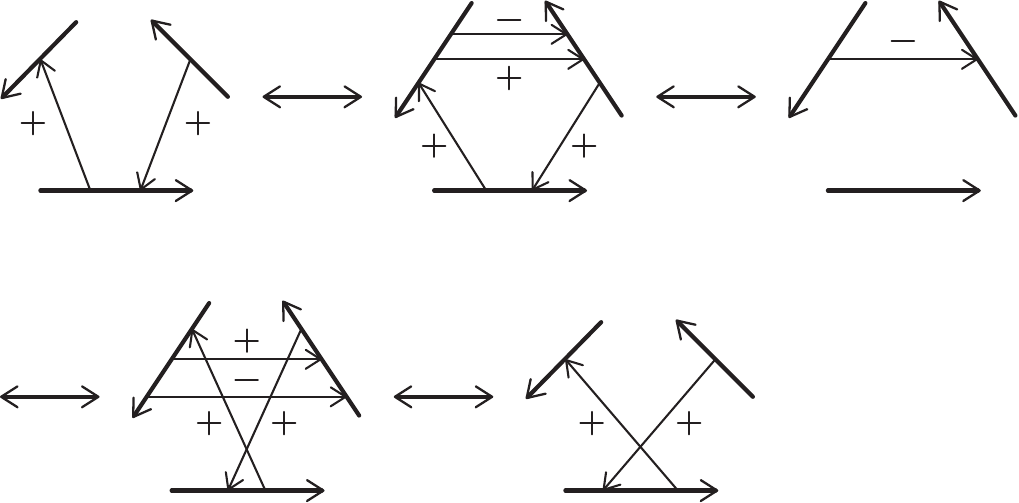}
      \put(83,118.4){R}
      \put(188.5,120.9){$v\Delta_{4}^{\circ}$}
      \put(5,37.2){$v\Delta_{3}^{\circ}$}
      \put(120.5,34.6){R}
    \end{overpic}
  \caption{An $FD_1$-move is realized by a $v\Delta_4^\circ$-move and a 
$v\Delta_3^\circ$-move}
  \label{pf-lem-delta-fd}
\end{figure}

A \textit{forbidden move}~\cite{GPV} on a link diagram 
is described by exchanging the positions of 
two consecutive endpoints of chords on a Gauss diagram 
which are both initial or both terminal. 
There are six types according to the signs and orientations of the chords, 
where we label them by $F_1,\dots,F_6$ 
as shown in Figure~\ref{forbidden}. 
We say that two oriented virtual links $L$ and $L'$ are 
\textit{$F$-equivalent} 
if their diagrams are related by a finite sequence of 
forbidden moves (up to generalized Reidemeister moves). 
We remark that any two oriented virtual knots are $F$-equivalent 
\cite{Kan, Nel}.

\begin{figure}[htbp]
\centering
    \begin{overpic}[width=11cm]{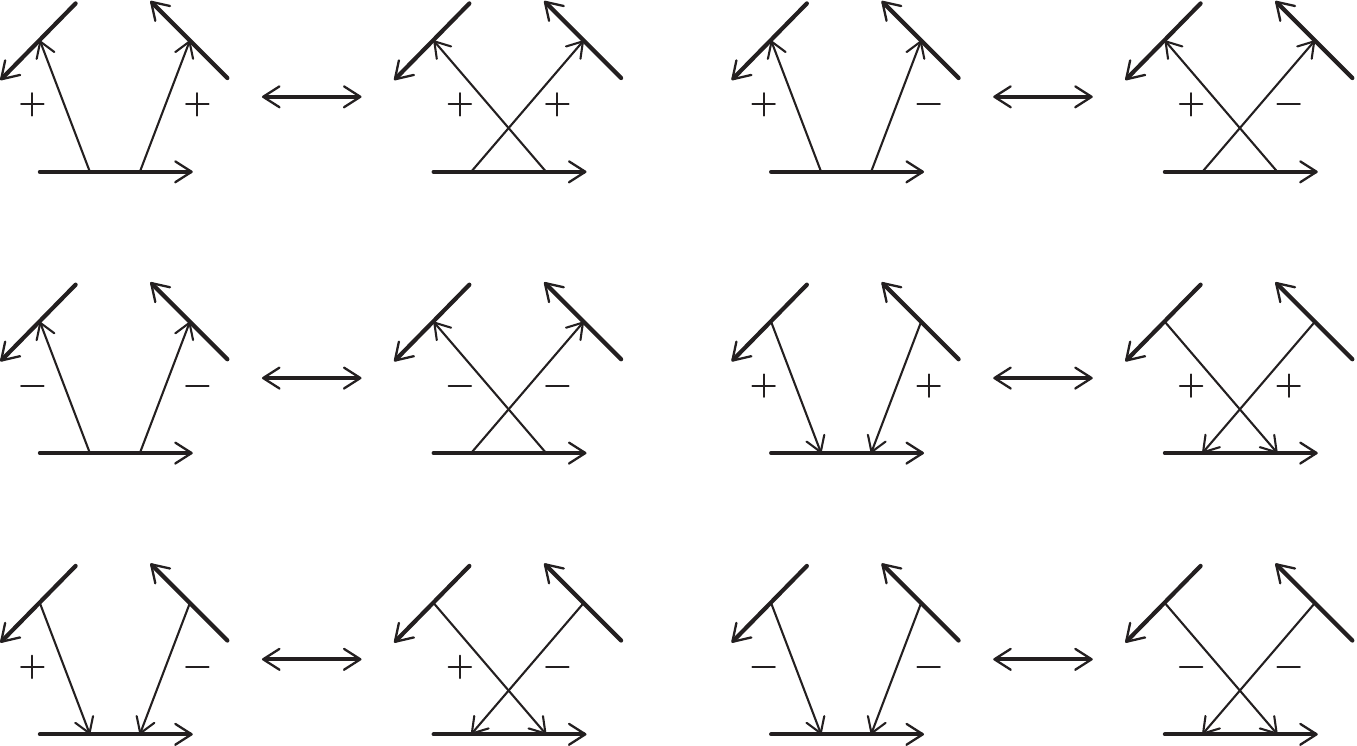}
      \put(67.5,156){$F_{1}$}
      \put(235.5,156){$F_{2}$}
      \put(67.5,91){$F_{3}$}
      \put(235.5,91){$F_{4}$}
      \put(67.5,26){$F_{5}$}
            \put(235.5,26){$F_{6}$}
    \end{overpic}
  \caption{Forbidden moves}
  \label{forbidden}
\end{figure}

\begin{lemma}\label{lem-delta-f}
For any $i\in\{1,\dots,4\}$ and $j\in\{1,\dots,6\}$, 
we have 
$v\Delta_i^\circ \Rightarrow F_j$. 
\end{lemma}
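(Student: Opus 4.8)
The plan is to reduce Lemma~\ref{lem-delta-f} to the already-established Lemma~\ref{lem-delta-fd} together with Lemma~\ref{lem-fd}, by showing that a forbidden move $F_j$ can be produced from forbidden detour moves and crossing changes (which in turn come from $v\Delta^\circ$-moves via Lemma~\ref{lem-cc}(ii)). The key observation is that on a Gauss diagram, a forbidden move exchanges two consecutive endpoints that are both \emph{initial} (tails) or both \emph{terminal} (heads), whereas a forbidden detour move exchanges a consecutive initial and terminal endpoint. A forbidden move $F_j$ therefore differs from a sequence of two forbidden detour moves only by the reversal of the orientation of one chord, and reversing the orientation of a chord on a Gauss diagram is exactly a crossing change at the corresponding real crossing.

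More concretely, first I would fix $j\in\{1,\dots,6\}$ and a diagram on which an $F_j$-move is to be performed, and examine the two chords $c_1,c_2$ whose endpoints (say both tails; the both-heads case is symmetric) are swapped. I would insert, via two Reidemeister~II moves, a short pair of parallel strands so that one of the two endpoints in question can be routed across the other as in a forbidden detour move; the cost of matching the head/tail data of the chord being moved is a single crossing change on that chord, i.e.\ a reversal of its orientation arrow. Thus an $F_j$-move is realized by a combination of (at most two) $FD$-moves, one crossing change, and several generalized Reidemeister moves. This is entirely parallel to the argument of Lemma~\ref{lem-delta-fd}, and I expect it to be displayed by one figure of Gauss-diagram manipulations, one for a representative sign/orientation pattern.

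Having done this, the conclusion follows by assembling the earlier lemmas: a crossing change is realized by a $v\Delta_i^\circ$-move (Lemma~\ref{lem-cc}(ii)); an $FD_k$-move is realized by a $v\Delta_i^\circ$-move (Lemma~\ref{lem-delta-fd}); all $FD_k$ are mutually equivalent (Lemma~\ref{lem-fd}); and all $v\Delta_i^\circ$ are mutually equivalent (Lemma~\ref{lem-delta}(ii)). Hence $v\Delta_i^\circ \Rightarrow F_j$ for every $i$ and $j$. The only genuine content is the single reduction $F_j$ from $FD$ plus a crossing change; the rest is bookkeeping.

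The step I expect to be the main obstacle is getting the Gauss-diagram picture exactly right: one must check that the two Reidemeister~II strands can indeed be inserted so that the detour move plus orientation-reversal lands precisely on the configuration defining $F_j$, for each of the six sign/orientation types. I anticipate that, as in Lemma~\ref{lem-fd}, two of the six types can be handled with one figure by setting $\varepsilon=\pm1$, and the remaining ones reduce to these via Lemma~\ref{lem-fd}; so in practice only one or two explicit Gauss-diagram sequences need to be drawn, with the rest dispatched by citing the mutual equivalences. If a clean direct picture is awkward for some type, the fallback is to route everything through a single chosen $F_{j_0}$ and one $FD_{k_0}$, using that the six $F_j$'s need not (yet) be shown mutually equivalent here — only the one-directional implications $v\Delta_i^\circ\Rightarrow F_j$ are claimed.
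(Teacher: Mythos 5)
Your proposal is correct and follows essentially the same route as the paper: the paper realizes an $F_j$-move as two crossing changes together with a single $FD_k$-move (change the crossing of one of the two chords so that the consecutive tail--tail or head--head pair becomes a tail--head pair, apply the forbidden detour move, then change the crossing back), and then concludes exactly as you do via Lemmas~\ref{lem-cc}(ii), \ref{lem-fd}, and \ref{lem-delta-fd}. The only slips in your accounting are that a crossing change reverses the \emph{sign} of the corresponding chord as well as its orientation, and that \emph{two} crossing changes (not one) are needed so that the chord is restored to its original data after the detour move; neither affects the validity of the argument.
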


\begin{proof}
We first consider the case $j=1$. 
The sequence in Figure~\ref{pf-lem-delta-f} shows that 
an $F_1$-move is realized by a combination of 
two crossing changes and an $FD_2$-move. 
Note that a crossing change at a real crossing on a link diagram 
is described by changing 
the sign and orientation of the corresponding chord on a Gauss diagram.
Therefore we have $v\Delta_i^\circ\Rightarrow F_1$ 
by Lemmas~\ref{lem-cc}(ii) and \ref{lem-delta-fd} 
for any $i$. 

\begin{figure}[htbp]
\centering
    \begin{overpic}[width=12cm]{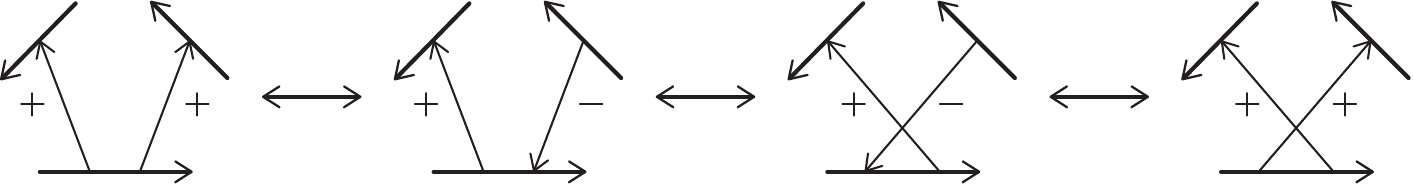}
      \put(71,25.5){cc}
      \put(160.5,27){$FD_{2}$}
      \put(261.5,25.5){cc}
    \end{overpic}
  \caption{Proof of $v\Delta_i^\circ\Rightarrow F_1$}
  \label{pf-lem-delta-f}
\end{figure}

Similarly, 
an $F_j$-move for $j\in\{2,\dots,6\}$ is realized by a combination of 
two crossing changes and an $FD_k$-move for some $k\in \{1,\ldots,4\}$. 
Thus we have the conclusion 
by Lemmas~\ref{lem-cc}(ii) and \ref{lem-delta-fd}. 
\end{proof}

In the remaining of this section, 
let $n$ be an integer with $n\geq 2$. 
For $n-1$ integers $a_2,\dots,a_n$, 
let $H(a_2,\dots,a_n)=\bigcup_{i=1}^n H_i$ be the Gauss diagram 
of an oriented $n$-component virtual link such that 
\begin{enumerate}
\item
$H(a_2,\dots,a_n)$ has no self-chords, 
\item
there are no nonself-chords between 
$H_i$ and $H_j$ ($2\leq i<j\leq n$), 
\item
if $a_i=0$, then there are no nonself-chords 
between $H_1$ and $H_i$, 
\item
if $a_i>0$, then there are $a_i$ parallel nonself-chords 
oriented from $H_1$ to $H_i$ with positive signs,  
\item
if $a_i<0$, then there are $-a_i$ parallel nonself-chords 
oriented from $H_1$ to $H_i$ with negative signs, and 
\item
along $H_1$ with respect to the orientation,  
we meet the endpoints of the chords 
between $H_1$ and $H_i$ before those between $H_1$ and $H_j$
$(2\leq i<j\leq n)$. 
\end{enumerate}

Figure~\ref{ex-H} shows the Gauss diagram $H(2,0,4,-3)$ with $n=5$. 
Let $M(a_2,\dots,a_n)$ be the $n$-component virtual link 
presented by $H(a_2,\dots,a_n)$. 

\begin{figure}[htbp]
\centering
    \begin{overpic}[width=9cm]{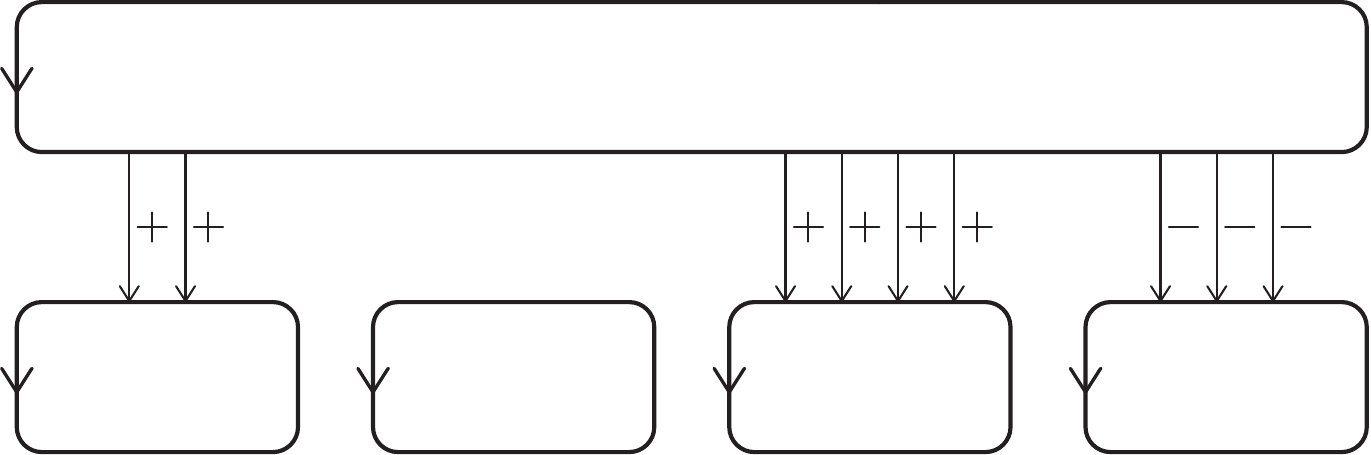}
      \put(-14.8,66.9){$H_{1}$}
      \put(24,-12){$H_{2}$}
      \put(90.5,-12){$H_{3}$}
      \put(157,-12){$H_{4}$}
      \put(223.5,-12){$H_{5}$}
    \end{overpic}
  \vspace{1em}
  \caption{The Gauss diagram $H(2,0,4,-3)$}
  \label{ex-H}
\end{figure}

\begin{proposition}\label{prop-standard}
Any oriented $n$-component virtual link $L$ is 
$v\Delta^\circ$-equivalent to 
$M(a_2,\dots,a_n)$ for some 
$a_2,\dots,a_n\in{\Z}$. 
\end{proposition}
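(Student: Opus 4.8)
The plan is to work at the level of Gauss diagrams and to reduce the Gauss diagram of $L$ to the shape $H(a_2,\dots,a_n)$ by a sequence of moves, all of which are available in the presence of $v\Delta^\circ$-moves: generalized Reidemeister moves; crossing changes (Lemma~\ref{lem-cc}(ii)), which reverse both the sign and the arrow of a single chord; forbidden moves $F_1,\dots,F_6$ (Lemma~\ref{lem-delta-f}) and forbidden detour moves $FD_1,\dots,FD_4$ (Lemma~\ref{lem-delta-fd}), which together transpose any two consecutive chord-endpoints on a circle and hence realize an arbitrary permutation of the endpoints on each circle; and the $v\Delta^\circ$-moves themselves, of which any of the four types may be used by Lemma~\ref{lem-delta}(ii). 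We shall also use that a Reidemeister~I move removes an isolated self-chord, and that a Reidemeister~II move removes two chords joining a fixed pair of circles which have opposite signs, the same arrow direction, and endpoints made consecutive.

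First I would eliminate self-chords: given a self-chord $c$ on a circle $H_i$, forbidden and forbidden detour moves push every endpoint lying on one of the two arcs of $H_i$ cut off by $c$ past an endpoint of $c$, and once the endpoints of $c$ are consecutive a Reidemeister~I move deletes it; iterating leaves a Gauss diagram with no self-chords, a property maintained from then on. Next I would move every surviving chord onto $H_1$. If $c$ joins $H_i$ and $H_j$ with $2\le i<j\le n$, then, after arranging an arc of $H_1$ alongside arcs of $H_i$ and $H_j$ by generalized Reidemeister moves, applying a $v\Delta^\circ$-move there produces three new chords of a common sign: one joining $H_1$ and $H_i$, one joining $H_1$ and $H_j$, and one, say $\gamma$, joining $H_i$ and $H_j$. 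Choosing the type of the $v\Delta^\circ$-move (Lemma~\ref{lem-delta}(ii)) and, if necessary, applying crossing changes to $\gamma$ and to $c$, I would arrange that $\gamma$ and $c$ have opposite signs and the same arrow direction, bring $\gamma$ next to $c$ by forbidden and forbidden detour moves, and cancel the pair by a Reidemeister~II move; the upshot is that $c$ has been traded for one chord joining $H_1$ and $H_i$ and one chord joining $H_1$ and $H_j$. Repeating over all such $c$ leaves a Gauss diagram whose chords each join $H_1$ to some $H_i$ with $2\le i\le n$.

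Finally I would normalize. For each $i$, a crossing change on every chord between $H_1$ and $H_i$ that points toward $H_1$ makes all of them point from $H_1$ to $H_i$; then any two of opposite signs are brought to consecutive position by forbidden and forbidden detour moves and cancelled by a Reidemeister~II move, so after iteration there remain $|a_i|$ chords of a single sign, and we declare $a_i$ to be that number, negated if the common sign is negative, and $0$ if no chord remains. A last round of forbidden and forbidden detour moves along $H_1$ and along $H_i$ makes these chords mutually parallel and reorders the $n-1$ blocks of endpoints along $H_1$ so that those of the chords to $H_i$ precede those to $H_j$ whenever $i<j$. The Gauss diagram now satisfies conditions (i)--(vi), so it is $H(a_2,\dots,a_n)$ and $L$ is $v\Delta^\circ$-equivalent to $M(a_2,\dots,a_n)$.

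I expect the main obstacle to be the second step, and within it the bookkeeping of signs and arrows that makes $\gamma$ cancellable against $c$: this is the one place where a genuine $v\Delta^\circ$-move, rather than one of its soft consequences, is indispensable. Indeed, the quantity $\operatorname{lk}(i\to j)-\operatorname{lk}(j\to i)$, the signed count of real crossings at which $H_i$ passes over $H_j$ minus that at which $H_j$ passes over $H_i$, is left unchanged by crossing changes, forbidden and forbidden detour moves, and Reidemeister moves, yet it vanishes in $M(a_2,\dots,a_n)$ for $i,j\ge 2$; hence no reduction using only those moves can succeed, and one must verify that the four types of $v\Delta^\circ$-move supplied by Lemma~\ref{lem-delta}(ii), together with crossing changes, give exactly enough control over the sign of the auxiliary chord $\gamma$ to complete the cancellation in every case. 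The remaining steps are routine rearrangements of chord-endpoints.
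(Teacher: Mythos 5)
Your proposal is correct and follows essentially the same route as the paper: eliminate self-chords by forbidden (detour) moves and Reidemeister~I, trade each chord between $H_i$ and $H_j$ ($i,j\ge 2$) for a pair of chords to $H_1$ via a $v\Delta^\circ$-move followed by a Reidemeister~II cancellation and a crossing change (this is precisely the paper's Lemma~\ref{lem-replacement}), and then normalize with crossing changes, forbidden (detour) moves, and Reidemeister~II. The closing observation that $v\Delta^\circ$-moves are genuinely needed for the second step is a nice sanity check but not required.
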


To prove this proposition, 
we prepare the following lemma.

\begin{lemma}\label{lem-replacement}
Let $G=\bigcup_{i=1}^n G_i$ be a Gauss diagram of 
an oriented $n$-component virtual link. 
Then any nonself-chord oriented from $G_i$ to $G_j$ 
$(2\leq i\ne j\leq n)$ 
with sign $\varepsilon$ 
can be replaced with a pair of nonself-chords 
one of which is oriented from $G_1$ to $G_i$ with sign $-\varepsilon$ 
and the other is from $G_1$ to $G_j$ with sign $\varepsilon$ 
by a combination of $v\Delta_k^\circ$-moves 
and a Reidemeister move {\rm II} for any $k\in\{1,\dots,4\}$.
\end{lemma}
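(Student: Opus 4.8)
The plan is to realize the replacement entirely on Gauss diagrams, using a single $v\Delta^\circ$-move to insert three new chords, one Reidemeister~II move to delete two of them, and one crossing change to correct an orientation. The chord count shows this is essentially forced: a $v\Delta^\circ$-move alters the number of chords by $3$, a Reidemeister~II move by $2$, and here the number of chords must increase by exactly $1$, since the chord $c$ is replaced by two.

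Write $c$ for the given chord, oriented from $G_i$ to $G_j$ with sign $\varepsilon$. First I would apply one $v\Delta^\circ$-move (the move of Figure~\ref{vDelta-circ-Gauss} with common sign $-\varepsilon$) inserting three chords $c_1,c_2,c_3$ arranged cyclically, so that $c_1$ runs from $G_i$ to $G_j$, $c_3$ runs from $G_j$ to $G_1$, and $c_2$ runs from $G_1$ to $G_i$. The endpoints of $c_1$ on $G_i$ and on $G_j$ are to be placed immediately beside the corresponding endpoints of $c$, in the parallel position, while the two endpoints of $c_2$ and $c_3$ lying on $G_1$ are placed side by side at an arbitrary point of $G_1$. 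This is a legitimate $v\Delta^\circ$-move because the chord diagrams before and after are both realizable by virtual link diagrams and differ precisely by three chords of the same sign in the configuration of Figure~\ref{vDelta-circ-Gauss}; moreover, by Lemma~\ref{lem-delta}(ii) it may be taken to consist of $v\Delta_k^\circ$-moves for any prescribed $k$.

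Now $c$ and $c_1$ join the same pair of circles, have opposite signs $\varepsilon$ and $-\varepsilon$, and have their four endpoints consecutive, so $\{c,c_1\}$ is a cancelling pair for a Reidemeister~II move; applying that move deletes both and leaves exactly $c_2$, running from $G_1$ to $G_i$ with sign $-\varepsilon$, and $c_3$, running from $G_j$ to $G_1$ with sign $-\varepsilon$. The chord $c_2$ is already of the required type. Finally, I would perform a crossing change at the real crossing corresponding to $c_3$, which, as recorded in the proof of Lemma~\ref{lem-delta-f}, reverses the orientation of $c_3$ and negates its sign, thereby turning $c_3$ into a chord from $G_1$ to $G_j$ with sign $\varepsilon$; such a crossing change is realized by a $v\Delta^\circ$-move by Lemma~\ref{lem-cc}(ii). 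The chord $c$ has then been replaced by the prescribed pair, using only $v\Delta_k^\circ$-moves and one Reidemeister~II move.

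The point that needs the most care is the choice made in the second paragraph: among the four $\circ$-types one must select the one whose inserted chords sit so that $c_1$ is parallel to $c$ with the opposite sign (so that the Reidemeister~II cancellation is valid) and so that $c_2$ and $c_3$ acquire the claimed orientations. The parallelism requirement forces $c_1$ to be oriented from $G_i$ to $G_j$, which pins down the cyclic pattern and hence the orientations of $c_2$ and $c_3$; checking this amounts to drawing the resulting short chain of Gauss diagrams, after which the remaining verifications are routine.
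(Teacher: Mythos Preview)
Your proposal is correct and follows essentially the same route as the paper's own proof: one $v\Delta^\circ$-move with common sign $-\varepsilon$ inserts three chords forming a directed cycle through $G_i$, $G_j$, and $G_1$; a Reidemeister~II move cancels the original chord $c$ against the inserted chord $c_1$ parallel to it; and a crossing change (realized via Lemma~\ref{lem-cc}(ii)) on the surviving chord between $G_1$ and $G_j$ fixes its sign and orientation, with Lemma~\ref{lem-delta}(ii) ensuring any $k$ may be used. The paper's Figure~\ref{pf-lem-replacement} depicts exactly this sequence $v\Delta_k^\circ \to \text{RII} \to \text{cc}$, so your argument and the paper's coincide step for step.
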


\begin{proof}
The sequence in Figure~\ref{pf-lem-replacement} shows that 
a nonself-chord oriented from $G_i$ to $G_j$ with sign $\varepsilon$ 
is replaced with a pair of nonself-chords 
one of which is oriented from $G_1$ to $G_i$ with sign $-\varepsilon$ 
and the other is from $G_1$ to $G_j$ with sign $\varepsilon$ 
by a combination of a $v\Delta_k^\circ$-move for some $k\in\{1,\dots,4\}$, a crossing change, 
and a Reidemeister move~II. 
Therefore we have the conclusion by Lemmas~\ref{lem-cc}(ii) and \ref{lem-delta}(ii). 
\end{proof}

\begin{figure}[htbp]
\centering
    \begin{overpic}[width=12cm]{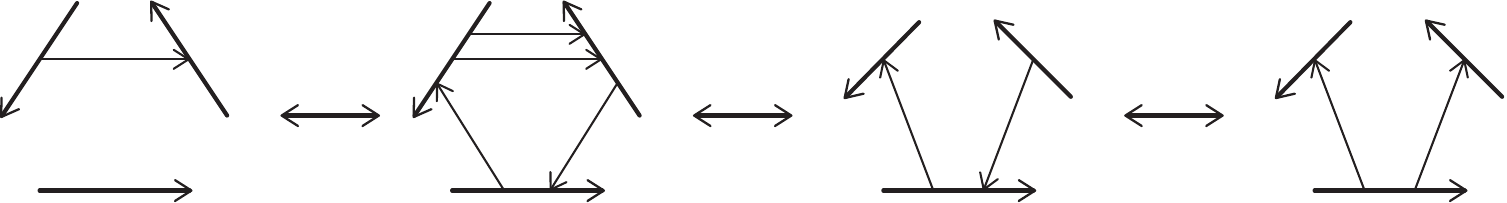}
      \put(66.5,27){$v\Delta_{k}^{\circ}$}
      \put(165,24.2){R}
      \put(262,24.2){cc}
      \put(23.5,34.3){$\e$}
      \put(-3,39){$G_{i}$}
      \put(42.5,39){$G_{j}$}
      \put(20,-9.9){$G_{1}$}
      \put(117.1,39.85){$\e$}
      \put(109.3,25.65){$-\e$}
      \put(92.2,10.35){$-\e$}
      \put(133,10.35){$-\e$}
      \put(90.5,39){$G_{i}$}
      \put(136.1,39){$G_{j}$}
      \put(114.6,-9.9){$G_{1}$}
      \put(188.3,14.56){$-\e$}
      \put(232.5,14.56){$-\e$}
      \put(189.8,39){$G_{i}$}
      \put(234,39){$G_{j}$}
      \put(213,-9.9){$G_{1}$}
      \put(286,14.56){$-\e$}
      \put(331,14.56){$\e$}
      \put(287.5,39){$G_{i}$}
      \put(332,39){$G_{j}$}
      \put(310,-9.9){$G_{1}$}
    \end{overpic}
  \vspace{1em}
  \caption{Proof of Lemma~\ref{lem-replacement}}
  \label{pf-lem-replacement}
\end{figure}

\begin{proof}[Proof of {\rm Proposition~\ref{prop-standard}}]
Let $G=\bigcup_{i=1}^n G_i$ be a Gauss diagram of $L$. 
Using forbidden (detour) moves and 
Reidemeister moves I, 
we can remove all the self-chords from $G$. 
By Lemmas~\ref{lem-delta-fd} and \ref{lem-delta-f}, 
we may assume that $G$ satisfies the condition (i)  
up to $v\Delta_i^\circ$-moves and Reidemeister moves. 

If there is a nonself-chord between $G_i$ and $G_j$ 
$(2\leq i\ne j\leq n)$, then we can replace it 
with a pair of chords between $G_1$ and $G_i$, 
and $G_1$ and $G_j$ by Lemma~\ref{lem-replacement}. 
Hence we may assume that $G$ satisfies 
the conditions (i) and (ii) up to $v\Delta_i^\circ$-moves and 
Reidemeister moves.

Finally, $G$ can be deformed into 
the one satisfying the conditions (i)--(vi) 
by forbidden (detour) moves, crossing changes, 
and Reidemeister moves II. 
Therefore we have the conclusion 
by Lemmas~\ref{lem-cc}(ii), \ref{lem-delta-fd}, and \ref{lem-delta-f}. 
\end{proof}

Let $L=\bigcup_{i=1}^n K_i$ be an oriented $n$-component virtual link, 
and $G=\bigcup_{i=1}^n G_i$ a Gauss diagram of $L$. 
The \textit{linking number} of an ordered pair 
$(K_i,K_j)$ is the sum of the signs of all the chords 
oriented from $G_i$ to $G_j$ $(1\leq i\ne j\leq n)$, 
which is an invariant of $L$ (cf.~\cite[Section 1.7]{GPV}). 
We denote it by ${\rm Lk}(K_i,K_j)$. 

For a chord $\gamma$ of $G$, 
it is convenient to introduce the signs of endpoints of $\gamma$ 
as follows. 
If the sign of $\gamma$ is $\varepsilon$, 
then we assign $-\varepsilon$ and $\varepsilon$ 
to the initial and terminal endpoints of $\gamma$, respectively. 
Then $-{\rm Lk}(K_i,K_j)$ 
is equal to the sum of the signs of all the endpoints 
of chords oriented from $G_i$ to $G_j$.

The \textit{$i$th intersection number} of $L$, denoted by $\lambda_i(L)$, 
is defined by 
\[\lambda_i(L)=\sum_{1\leq j\ne i\leq n} {\rm Lk}(K_j,K_i)
-\sum_{1\leq j\ne i\leq n}{\rm Lk}(K_i,K_j)\]
for $1\leq i\leq n$. 
Equivalently, $\lambda_i(L)$ is equal to 
the sum of the signs of all the endpoints of chords 
between $G_i$ and $G\setminus G_i$.

\begin{lemma}\label{lem-invariant}
If two oriented $n$-component virtual links 
$L$ and $L'$ are $v\Delta^\circ$-equivalent, 
then $\lambda_i(L)=\lambda_i(L')$ holds 
for any $1\leq i\leq n$. 
\end{lemma}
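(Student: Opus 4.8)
The plan is to show that $\lambda_i$ is invariant under each of the eight types of $v\Delta^\circ$-moves; since $v\Delta^\circ$-equivalence is generated by these moves together with generalized Reidemeister moves, and the latter clearly preserve $\lambda_i$ (they do not change the linking numbers ${\rm Lk}(K_a,K_b)$), this suffices. By Lemma~\ref{lem-delta}(ii) all four $v\Delta^\circ_j$-moves are mutually equivalent, so in principle it is enough to check invariance under a single type, say $v\Delta^\circ_1$; however, since the bookkeeping is identical in all cases, I would simply argue uniformly.

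First I would recall the Gauss-diagram description from Figure~\ref{vDelta-circ-Gauss}: a $v\Delta^\circ_i$-move deletes or adds three chords, all of the same sign $\varepsilon$, whose six endpoints lie in a small arc configuration on (at most) three of the circles. The key observation is the second characterization of $\lambda_i(L)$: it equals the sum of the signs of all endpoints of chords running between $G_i$ and its complement $G\setminus G_i$, where the initial endpoint of an $\varepsilon$-signed chord gets sign $-\varepsilon$ and the terminal endpoint gets sign $\varepsilon$. So each chord of $G$ contributes to $\lambda_i$ via its endpoints lying on $G_i$: a self-chord of $G_i$ contributes $0$ (both endpoints counted, total $-\varepsilon+\varepsilon$ — actually a self-chord of $G_i$ is not "between $G_i$ and $G\setminus G_i$" so contributes nothing), a nonself-chord with exactly one endpoint on $G_i$ contributes $\pm\varepsilon$ according to whether that endpoint is terminal or initial, and a chord disjoint from $G_i$ contributes $0$.

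Now I would examine the three chords involved in a $v\Delta^\circ$-move. The crucial structural fact, visible in Figure~\ref{vDelta-circ-Gauss}, is that the three chords being added (or deleted) always have the \emph{same sign} $\varepsilon$, and their six endpoints are arranged so that at each of the three participating arcs there sits one initial and one terminal endpoint. Hence, restricted to any single circle $G_i$, the endpoints of these three chords that lie on $G_i$ consist of an equal number of initial and terminal endpoints (either zero of each, or one of each), so their total contribution to $\lambda_i$ is $-\varepsilon+\varepsilon=0$ or $0$. Therefore adding or deleting the triple of chords changes $\lambda_i$ by $0$ for every $i$, which is exactly what we want.

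The main obstacle — really the only point requiring care — is verifying that "one initial and one terminal endpoint per participating arc" claim directly from Figure~\ref{vDelta-circ-Gauss} for all eight types $v\Delta^\circ_1,\dots,v\Delta^\circ_4$ (and their mirror/orientation variants), keeping track of which of the three local strands may belong to the same component and which to different components. Since $\lambda_i$ only sees the partition of endpoints into "on $G_i$" versus "not on $G_i$", the cases collapse: whether the three strands lie on one, two, or three distinct components, the endpoints landing on any fixed $G_i$ always come in initial–terminal pairs with a common sign, so cancel. I would present this as a short case inspection of the figure rather than a computation, then conclude that $v\Delta^\circ$-moves preserve each $\lambda_i$, and hence $v\Delta^\circ$-equivalent links have equal $\lambda_i$.
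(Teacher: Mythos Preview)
Your argument is correct and is exactly the paper's approach: the paper's proof is a single sentence noting that at each of the three local arcs in a $v\Delta^\circ$-move the two adjacent endpoints carry opposite signs $\varepsilon$ and $-\varepsilon$ (Figure~\ref{pf-lem-invariant}), which is precisely your ``one initial and one terminal endpoint per participating arc'' observation. One cosmetic slip: there are four types $v\Delta^\circ_1,\dots,v\Delta^\circ_4$, not eight, and your parenthetical ``either zero of each, or one of each'' should also allow two or three of each when several strands lie on $G_i$---but as you yourself note in the final paragraph, the cancellation goes through in every case.
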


\begin{proof}
Every pair of three chords appeared in a $v\Delta_i^\circ$-move 
has two adjacent endpoints with opposite signs $\varepsilon$ and $-\varepsilon$. 
See Figure~\ref{pf-lem-invariant}. 
\end{proof}

\begin{figure}[htbp]
\centering
    \begin{overpic}[width=6cm]{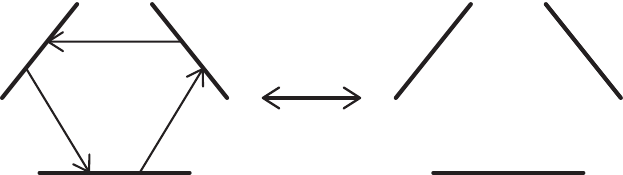}
      \put(77,29){$v\Delta_{i}^{\circ}$}
      \put(29,39.3){$\e$}
      \put(5.75,11){$\e$}
      \put(52,11){$\e$}
      \put(4.1,36.13){$\e$}
      \put(53,36.13){$-\e$}
      \put(-9.8,27.5){$-\e$}
      \put(60.8,27.5){$\e$}
      \put(21,-7.5){$\e$}
      \put(29,-7.5){$-\e$}
    \end{overpic}
  \caption{Proof of Lemma~\ref{lem-invariant}}
  \label{pf-lem-invariant}
\end{figure}

\begin{lemma}\label{lem-Minv}
Let $L=M(a_2,\dots,a_n)$ be the oriented $n$-component 
virtual link given in {\rm Proposition~\ref{prop-standard}}. 
Then we have 
$$\lambda_i(L)=
\begin{cases}
-(a_2+\dots+a_n) & \text{if }i=1, \\
a_i & \text{if }i=2,\dots,n. 
\end{cases}$$
\end{lemma}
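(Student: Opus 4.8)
The plan is to read off all pairwise linking numbers of $L=M(a_2,\dots,a_n)$ directly from the conditions (i)--(vi) defining its Gauss diagram $H=H(a_2,\dots,a_n)=\bigcup_{i=1}^n H_i$, and then to substitute them into the definition of $\lambda_i(L)$.

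First I would write $L=\bigcup_{i=1}^n K_i$, where $K_i$ is presented by $H_i$, and compute ${\rm Lk}(K_s,K_t)$ for all ordered pairs $s\ne t$. By conditions (i) and (ii), every chord of $H$ joins $H_1$ to some $H_i$ with $2\le i\le n$; by (iii)--(v) there are exactly $|a_i|$ such chords, all oriented from $H_1$ to $H_i$ and all of sign ${\rm sgn}(a_i)$ (and none when $a_i=0$). Hence, for $2\le i\le n$,
\[{\rm Lk}(K_1,K_i)=|a_i|\cdot{\rm sgn}(a_i)=a_i,\qquad {\rm Lk}(K_i,K_1)=0,\]
while ${\rm Lk}(K_i,K_j)=0$ whenever $2\le i\ne j\le n$. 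Condition (vi) plays no role here, since linking numbers do not see the cyclic order of chord endpoints along $H_1$.

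It then remains to plug these values into $\lambda_i(L)=\sum_{j\ne i}{\rm Lk}(K_j,K_i)-\sum_{j\ne i}{\rm Lk}(K_i,K_j)$. For $i=1$ only the second sum contributes, giving $\lambda_1(L)=-\sum_{j=2}^n a_j$; for $2\le i\le n$ only the terms with $j=1$ survive in each sum, giving $\lambda_i(L)={\rm Lk}(K_1,K_i)-{\rm Lk}(K_i,K_1)=a_i$. Alternatively one may argue via the endpoint-sign description of $\lambda_i$ recalled just before the lemma: the $|a_i|$ chords from $H_1$ to $H_i$ place their terminal endpoints, of sign ${\rm sgn}(a_i)$, on $H_i$ and their initial endpoints, of sign $-{\rm sgn}(a_i)$, on $H_1$, so summing signs of endpoints on $H_i$ yields $a_i$ for $i\ge2$ and summing over all $j$ on $H_1$ yields $-\sum_{j=2}^n a_j$. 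Since the whole argument is an unwinding of the definitions, I do not expect a genuine obstacle; the only point needing care is the bookkeeping of signs and orientations --- in particular that a chord oriented from $H_1$ to $H_i$ has its \emph{terminal} endpoint on $H_i$ --- together with noting that the case $a_i=0$ is covered vacuously.
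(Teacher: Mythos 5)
Your proof is correct and takes the same route as the paper, which simply states that the lemma ``follows by definition immediately''; you have just written out the definition-unwinding (computing ${\rm Lk}(K_1,K_i)=a_i$ and all other linking numbers zero, then substituting into the formula for $\lambda_i$) that the authors leave implicit. The sign bookkeeping, including the verification that $|a_i|\cdot{\rm sgn}(a_i)=a_i$ in every case and the observation that condition (vi) is irrelevant here, is accurate.
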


\begin{proof}
This follows by definition immediately. 
\end{proof}

For example, 
the virtual link $L=M(2,0,4,-3)$ satisfies 
$$\lambda_1(L)=-3, \ \lambda_2(L)=2, \ 
\lambda_3(L)=0, \ \lambda_4(L)=4, 
\text{ and } \lambda_5(L)=-3.$$

\begin{proof}[Proof of {\rm (i)$\Leftrightarrow$(iii) in Theorem~\ref{thm12}}]
\underline{(i)$\Rightarrow$(iii).}
This follows from Lemma~\ref{lem-invariant}. 

\underline{(iii)$\Rightarrow$(i).} 
By Proposition~\ref{prop-standard}, 
$L$ and $L'$ are $v\Delta^\circ$-equivalent to 
$$M(a_2,\dots,a_n) \text{ and }
M(a_2',\dots,a_n')$$
for some $a_2,\dots,a_n$ and $a_2',\dots,a_n'\in{\Z}$, 
respectively. 
It follows from Lemmas~\ref{lem-invariant} and \ref{lem-Minv} 
that 
$$a_i=\lambda_i(L)=\lambda_i(L')=a_i'$$
for any $i=2,\dots,n$. 
Since $M(a_2,\dots,a_n)=M(a_2',\dots,a_n')$ holds, 
$L$ is $v\Delta^\circ$-equivalent to $L'$. 
\end{proof}

\section{$v\Delta^\wedge$-, $v\Delta^\circ$-, 
$v\sharp$-, and $vp$-unknotting numbers}\label{sec5}

In this section, 
we will consider the case of oriented virtual knots.

\begin{lemma}
For every $X\in\{v\Delta^\wedge, \ v\Delta^\circ, \ v\sharp, \ vp\}$,  
any two oriented virtual knots are $X$-equivalent to each other.
In particular, the $X$-move is an unknotting operation for oriented 
virtual knots. 
\end{lemma}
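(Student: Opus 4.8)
The plan is to reduce each move in $\{v\Delta^{\wedge}, v\Delta^{\circ}, v\sharp, vp\}$ to crossing changes together with the forbidden move, and then to invoke the known fact that any two oriented virtual knots are $F$-equivalent. Concretely, we already have from Lemmas~\ref{lem-cc}, \ref{lem-cc2}, and \ref{lem-cc3} that a crossing change is realized by each of these moves, so it suffices to show the reverse: that each $X$-move, for $X$ in the list, can be realized by a combination of crossing changes, forbidden moves, and generalized Reidemeister moves. Then a sequence of $X$-moves can be converted into a sequence of crossing changes and forbidden moves. Since crossing changes are themselves realized by forbidden moves (as noted in the proof of Lemma~\ref{lem-delta-f}, a crossing change on a Gauss diagram changes the sign and orientation of a chord, which can be effected by forbidden moves up to Reidemeister moves — or one can cite this directly), any two $X$-equivalent virtual knots are $F$-equivalent, and conversely. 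Combined with the fact that all oriented virtual knots are $F$-equivalent~\cite{Kan,Nel}, this gives the claim.

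First I would observe that by Lemmas~\ref{lem-delta}(i), \ref{lem-sharp}, and~\ref{lem-pass}, together with Lemma~\ref{lem-delta-sharp} and Lemma~\ref{lem-delta-pass}, all of $v\Delta^{\wedge}$, $v\sharp$, $v\Delta^{\circ}$, and $vp$ are mutually related in the appropriate groupings; in particular $v\sharp \Leftrightarrow v\Delta^{\wedge}$ and $vp \Leftrightarrow v\Delta^{\circ}$. So it is enough to handle $X = v\Delta^{\wedge}$ and $X = v\Delta^{\circ}$. For $v\Delta^{\circ}$, Lemma~\ref{lem-delta-f} already gives $v\Delta_i^{\circ} \Rightarrow F_j$, so a $v\Delta^{\circ}$-move is realized by forbidden moves up to crossing changes and Reidemeister moves; hence $v\Delta^{\circ}$-equivalent knots are $F$-equivalent. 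For $v\Delta^{\wedge}$, I would give a parallel lemma: a $v\Delta_i^{\wedge}$-move is realized by a combination of crossing changes, a forbidden move, and generalized Reidemeister moves — the Gauss-diagram picture is essentially the same local configuration of three chords as in the $v\Delta^{\circ}$ case, differing only in orientations, and after two crossing changes on two of the three chords one reaches a forbidden-move configuration. (Alternatively, since a crossing change is realized by a $v\Delta^{\wedge}$-move and since $v\Delta^{\wedge}$ and $v\Delta^{\circ}$ differ only in string orientations at the level of the move, one can also route through $v\Delta^{\circ}$ after reversing orientations.)

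Having established that, for each $X$ in the list, two virtual knots are $X$-equivalent if and only if they are $F$-equivalent — the "only if" from the reductions above plus the fact that crossing changes are realized by forbidden moves, and the "if" because a forbidden move can be realized by $X$-moves (this requires a separate small check, or one simply uses that $X$-moves generate crossing changes and that is enough to pass between any two $F$-equivalent knots once we know the target — but in fact for the unknotting statement we only need the "only if" direction applied to $K$ and the unknot). Then I would conclude: given any two oriented virtual knots $K$ and $K'$, by~\cite{Kan,Nel} they are $F$-equivalent; combined with the reductions, they are $X$-equivalent. Taking $K'$ to be the trivial knot shows $X$ is an unknotting operation.

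The main obstacle I expect is the $v\Delta^{\wedge}$ case: unlike $v\Delta^{\circ}$, there is no earlier lemma of the form $v\Delta_i^{\wedge} \Rightarrow F_j$ in the excerpt, so I would need to supply the Gauss-diagram argument (or the orientation-reversal reduction to $v\Delta^{\circ}$) explicitly, checking that the local three-chord configuration of a $v\Delta^{\wedge}$-move, after appropriate crossing changes, becomes a forbidden-move configuration — this is a routine but genuinely necessary diagrammatic verification. Everything else is bookkeeping with the lemmas already proved and the cited $F$-equivalence of virtual knots.
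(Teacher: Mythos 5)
Your final sentence draws the right conclusion, but most of the argument you build toward it points in the wrong direction. To deduce that any two oriented virtual knots are $X$-equivalent from the fact that they are $F$-equivalent, you need that each forbidden move can be realized by $X$-moves --- in the paper's notation (where $X\Rightarrow Y$ means a $Y$-move is realized by $X$-moves and Reidemeister moves), you need $X\Rightarrow F$. The reductions you propose to carry out, writing each $X$-move as a combination of crossing changes, forbidden moves, and Reidemeister moves, establish only that $X$-equivalence implies $F$-equivalence, which contributes nothing here. The confusion surfaces concretely in two places. First, you gloss Lemma~\ref{lem-delta-f} as saying ``a $v\Delta^\circ$-move is realized by forbidden moves'' and conclude ``hence $v\Delta^\circ$-equivalent knots are $F$-equivalent''; the lemma actually says the opposite (an $F_j$-move is realized by $v\Delta_i^\circ$-moves), which is exactly the direction you need and which gives ``$F$-equivalent implies $v\Delta^\circ$-equivalent.'' Second, your parenthetical claim that ``for the unknotting statement we only need the `only if' direction applied to $K$ and the unknot'' is backwards: knowing that $X$-equivalence implies $F$-equivalence tells you nothing about whether $K$ is $X$-equivalent to the unknot; that requires the other implication.

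Once the direction is fixed, the ``main obstacle'' you identify for $v\Delta^\wedge$ disappears: no new Gauss-diagram verification is needed, since Lemma~\ref{lem-delta}(iii) already gives $v\Delta_i^\wedge\Rightarrow v\Delta_j^\circ$, and composing with Lemma~\ref{lem-delta-f} yields $v\Delta_i^\wedge\Rightarrow F_k$. This is precisely the paper's proof: chain $v\Delta^\wedge\Rightarrow v\Delta^\circ\Rightarrow F$ together with the equivalences $v\sharp\Leftrightarrow v\Delta^\wedge$ (Lemma~\ref{lem-delta-sharp}) and $vp\Leftrightarrow v\Delta^\circ$ (Lemma~\ref{lem-delta-pass}), conclude that $F$-equivalent knots are $X$-equivalent for each $X$ in the list, and invoke \cite{Kan,Nel}. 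Neither the crossing-change lemmas (\ref{lem-cc}, \ref{lem-cc2}, \ref{lem-cc3}) you lead with nor the reduction of $X$-moves to forbidden moves plays any role.
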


\begin{proof}
By Lemmas~\ref{lem-delta}(iii), \ref{lem-delta-sharp}, 
\ref{lem-delta-pass}, and \ref{lem-delta-f}, 
we have the following. 

\begin{center}
\begin{tabular}{ccccc}
$v\Delta_1^\wedge,\dots,v\Delta_4^\wedge$ 
&
$\Rightarrow$ 
&
$v\Delta_1^\circ,\dots,v\Delta_4^\circ$ 
&
$\Rightarrow$ 
&
$F_1,\dots,F_6$ \\
$\Updownarrow$ & & 
$\Updownarrow$
& & \\
$vp_1,\dots,vp_4$ & &
$v\sharp_1,v\sharp_2$
& & 
\end{tabular}
\end{center}

Therefore we see that 
if two oriented virtual knots are $F$-equivalent, 
then they are $X$-equivalent for every 
$X\in\{v\Delta^\wedge, \ v\Delta^\circ, \ v\sharp, \ vp\}$. 
Since any two oriented virtual knots are $F$-equivalent \cite{Kan,Nel}, 
they are $X$-equivalent. 
\end{proof}

For $X\in\{v\Delta, v\Delta^\wedge, \ v\Delta^\circ, \ v\sharp, \ vp\}$ and 
two oriented virtual knots $K$ and $K'$, 
we denote by ${\rm d}_X(K,K')$ the minimal number of 
$X$-moves which are required to deform a diagram of $K$ into that of $K'$. 
It is called the \textit{$X$-distance} between $K$ and $K'$. 
In particular, we denote ${\rm d}_X(K,O)$ by ${\rm u}_X(K)$, 
and call it the \textit{$X$-unknotting number} of $K$, 
where $O$ is the trivial knot.

We briefly review the $n$-writhe $J_n(K)$ 
and the odd writhe $J(K)$ of an oriented virtual knot $K$, 
which are invariants of $K$ (cf. \cite{ST}). 
Let $G$ be a Gauss diagram of $K$, 
and $\gamma$ a chord of $G$.  
The endpoints of $\gamma$ divide the underlying oriented circle of $G$ 
into two arcs. 
Let $\alpha$ be the one of the two arcs 
oriented from the initial endpoint of $\gamma$ to the terminal. 
The \textit{index} of $\gamma$ 
is the sum of the signs of all the endpoints of chords on $\alpha$, 
and denoted by ${\rm Ind}(\gamma)$. 
For $n\ne 0$, the sum of the signs of all the chords with index $n$ 
is an invariant of $K$. 
It is called the \textit{$n$-writhe} of $K$, 
and denoted by $J_{n}(K)$. 
Furthermore the \textit{odd writhe} of $K$ is defined to be 
$J(K)=\sum_{n{\rm :odd}}J_n(K)$.


\begin{proposition}\label{prop-lower-odd}
For two oriented virtual knots $K$ and $K'$, 
we have the following. 
\begin{enumerate}
\item
${\rm d}_X(K,K')\geq 
\frac{1}{2}|J(K)-J(K')|$ for 
$X\in\{v\Delta^\wedge, v\Delta^\circ, vp\}$. 
\item
${\rm d}_{v\sharp}(K,K')\geq 
\frac{1}{4}|J(K)-J(K')|$. 
\end{enumerate}
In particular, we have 
$${\rm u}_X(K)\geq \tfrac{1}{2}|J(K)| 
\text{ for }X\in\{v\Delta^\wedge, v\Delta^\circ, vp\} 
\text{ and }
{\rm u}_{v\sharp}(K)\geq \tfrac{1}{4}|J(K)|.$$
\end{proposition}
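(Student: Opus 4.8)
The plan is to estimate how much a single $X$-move can change the invariant $J(K)$, and then divide. Since an $X$-move is a local operation, I would first pass to Gauss diagrams: each of the moves $v\Delta_j^\circ$, $v\sharp_i$, $vp_i$, and (via Lemma~\ref{lem-delta-sharp}) $v\Delta_j^\wedge$ is realized by inserting or deleting a small fixed collection of chords with prescribed signs, orientations, and relative positions in a tiny arc of the underlying circle. So it suffices to analyze, for each move, the net change in $\sum_{n:\,\mathrm{odd}}J_n$ caused by adding that packet of chords. Here one should recall two standard facts about $n$-writhes that I would cite or quickly re-derive: first, $\sum_n J_n(K)$ over \emph{all} $n$ equals the total writhe minus the contribution of index-$0$ chords, but the \emph{odd} part behaves like a mod-$2$-index count; second, adding a chord $\gamma$ to $G$ changes the index of another chord $\delta$ only if an endpoint of $\gamma$ lies on the arc $\alpha$ associated to $\delta$, and it changes $\mathrm{Ind}(\delta)$ by the signed count of $\gamma$'s endpoints on that arc.

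The key structural observation — the one that makes the bound come out to $1/2$ for the $\Delta$-type and pass-type moves and $1/4$ for $v\sharp$ — is a parity/cancellation statement: in each move packet, the endpoints that are inserted come in pairs of opposite sign that are \emph{adjacent} on the circle (this is exactly the phenomenon exploited in Lemma~\ref{lem-invariant}, Figure~\ref{pf-lem-invariant}, and in Lemma~\ref{lem-fd} where two Reidemeister~II pairs appear). Because of this adjacency-with-opposite-signs structure, such a pair contributes $0$ to the index of every chord whose associated arc either contains both endpoints or neither, and $\pm\varepsilon$ only to the finitely many chords separating the pair. Tracking these contributions, I expect to show: (a) for $X\in\{v\Delta^\wedge,v\Delta^\circ,vp\}$ a single $X$-move changes $J$ by at most $2$ in absolute value; (b) for $v\sharp$ a single move changes $J$ by at most $4$. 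For (a) it is cleanest to work with, say, $v\Delta^\circ$ and $vp$ directly from their Gauss-diagram descriptions (Figure~\ref{vDelta-circ-Gauss} and the analog for $vp$), and handle $v\Delta^\wedge$ via $v\sharp_1\Rightarrow v\Delta_1^\wedge$ together with the fact, to be checked, that the extra crossing change in that realization does not inflate the bound — or, more safely, analyze $v\Delta^\wedge$ on its own Gauss diagram. The $v\sharp$-move involves two real crossings rather than one (and the decomposition of Figure~\ref{pf-lem-delta-sharp2} uses a $v\Delta_1^\wedge$- and a $v\Delta_4^\wedge$-move), which is the source of the weaker constant $\tfrac14$.

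Once the per-move bounds (a) and (b) are in hand, the distance inequalities follow immediately by the triangle inequality: if $K$ is carried to $K'$ by $d$ moves of type $X$, then $|J(K)-J(K')|\le 2d$ (resp.\ $\le 4d$), so $d\ge \tfrac12|J(K)-J(K')|$ (resp.\ $\ge\tfrac14|J(K)-J(K')|$). Specializing to $K'=O$, for which $J(O)=0$, gives the stated lower bounds for ${\rm u}_X(K)$ and ${\rm u}_{v\sharp}(K)$.

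**Main obstacle.** The routine-looking but genuinely delicate step is the careful bookkeeping in part (b) — and in the estimate of any $X$-move that is analyzed via a realization using auxiliary crossing changes. A crossing change flips the sign \emph{and} orientation of a chord, which can move that chord in or out of index class $\{n:\ n\text{ odd}\}$ and simultaneously shift the indices of chords it separates; a priori a single crossing change can change $J$ by an unbounded amount, so one cannot afford to bound $v\sharp$ or $v\Delta^\wedge$ by naively summing contributions of the crossing changes in Figures~\ref{pf-lem-delta-sharp1}, \ref{pf-lem-delta-sharp2}. The correct route is to analyze each of $v\sharp_1$, $v\sharp_2$ (and each $v\Delta_j^\wedge$) \emph{intrinsically} on the Gauss diagram, using only the local chord packet that the move itself inserts/deletes, and to verify directly that the opposite-sign adjacency forces all but a bounded number of index contributions to cancel. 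Getting the constants exactly right (that $v\sharp$ really costs $4$ and not, say, $2$ or $6$) is where the care is needed; everything after that is the triangle inequality.
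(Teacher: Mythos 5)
Your overall architecture --- bound the change of $J$ per move, then apply the triangle inequality and specialize to $K'=O$ --- is exactly the paper's, and your intrinsic Gauss-diagram analysis is essentially what the paper does for $vp$ (two positive and two negative chords, so the direct contribution to $J$ is at most $2$) and for $v\sharp$ (four chords, so at most $4$). The one place where the paper takes a genuinely different route is $X\in\{v\Delta^\wedge,v\Delta^\circ\}$: since every such move is in particular an unoriented $v\Delta$-move, it simply observes ${\rm d}_X(K,K')\geq {\rm d}_{v\Delta}(K,K')$ and quotes \cite[Proposition~2.6]{NNSW} for the bound ${\rm d}_{v\Delta}(K,K')\geq\frac12|J(K)-J(K')|$, rather than re-proving the per-move estimate.

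That is also where your proposal has a genuine gap. The cancellation mechanism you describe (adjacent opposite-sign endpoint pairs) only shows that the indices of the \emph{other} chords are unchanged; the change in $J$ is then the sum of the signs of the inserted chords that happen to have odd index. For $vp$ this immediately gives $2$ because the signs are $+,+,-,-$, and for $v\sharp$ it gives $4$. But a $v\Delta^\wedge$- or $v\Delta^\circ$-move inserts \emph{three chords of the same sign} $\varepsilon$, so this argument only yields a per-move change of at most $3$, i.e., the weaker bound $\frac13|J(K)-J(K')|$, not the claimed $\frac12$. The missing ingredient is a parity relation among the three inserted chords: they pairwise cross, so modulo $2$ each index equals the number of old-chord endpoints in the corresponding arc between two of the three sites, and these three counts sum to $0$ modulo $2$; hence ${\rm Ind}(\gamma_3)\equiv{\rm Ind}(\gamma_1)+{\rm Ind}(\gamma_2)\pmod 2$ and at most two of the three chords can have odd index, giving the change of at most $2$. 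You need to supply this (or reduce to the unoriented $v\Delta$ result as the paper does). A smaller point: your explanation that the constant $\frac14$ comes from the $v\sharp$-move ``involving two real crossings rather than one'' is off --- it involves four real crossings of a single sign, so all four inserted chords can simultaneously have odd index (the knots in the proof of Theorem~\ref{thm-vs} realize $|J|=4m$ with ${\rm u}_{v\sharp}=m$), and any attempt to derive the constant from the decomposition of Figure~\ref{pf-lem-delta-sharp2} is, as you yourself note, blocked by the auxiliary crossing changes.
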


\begin{proof}
(i) 
For $X\in\{v\Delta^\wedge, v\Delta^\circ\}$,
if $K$ and $K'$ are $X$-equivalent,
then they are $v\Delta$-equivalent by definition.
Thus we have ${\rm d}_{X}(K,K')\geq  {\rm d}_{v\Delta}(K,K')$.
Since it follows from \cite[Proposition~2.6]{NNSW} that 
${\rm d}_{v\Delta}(K,K')\geq \frac{1}{2}|J(K)-J(K')|$ holds, 
we have the inequality.

For $X=vp$, a $vp$-move contains two positive and two negative
real crossings. Therefore 
a single $vp$-move changes the odd writhe by at most two.

\noindent(ii) Since a $v\sharp$-move contains four real crossings,
a single $v\sharp$-move changes the odd writhe by at most four.
\end{proof}

\begin{proposition}\label{prop-lower-writhe}
For two oriented  virtual knots $K$ and $K'$, 
we have the following. 
\begin{enumerate}
\item
${\rm d}_{v\Delta^\circ}(K,K')\geq 
\frac{1}{3}\sum_{n\ne 0} |J_n(K)-J_n(K')|.$
\item
${\rm d}_{vp}(K,K')\geq 
\frac{1}{4}\sum_{n\ne 0}|J_n(K)-J_n(K')|.$
\end{enumerate}
In particular, we have 
${\rm u}_{v\Delta^\circ}(K)\geq \tfrac{1}{3}\sum_{n\ne 0}|J_n(K)|
\text{ and }
{\rm u}_{vp}(K)\geq \frac{1}{4}\sum_{n\ne 0}|J_n(K)|.$
\end{proposition}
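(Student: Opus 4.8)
The plan is to mimic the strategy used for the odd writhe in Proposition~\ref{prop-lower-odd}, but now tracking each $n$-writhe $J_n$ separately and summing the contributions. The key object to control is how a single $v\Delta^\circ$-move (respectively a single $vp$-move) can change the quantity $\Phi(K) = \sum_{n\ne 0}|J_n(K)|$, or more precisely the difference $\sum_{n\ne 0}|J_n(K)-J_n(K')|$ along a sequence of moves. Since $J_n$ is a signed count of chords of index $n$ in a Gauss diagram, the effect of a local move is governed by (a) how many real crossings (chords) are created, destroyed, or have their crossing data altered, and (b) how the indices of the \emph{other} chords may shift because endpoints are rearranged on the underlying circle.

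First I would record the Gauss-diagram description of the moves. A $v\Delta^\circ_i$-move adds or removes exactly three chords, all of the same sign $\varepsilon$ (Figure~\ref{vDelta-circ-Gauss}); crucially, by Lemma~\ref{lem-invariant} the three chords contribute endpoints that cancel in pairs locally, so one checks that the indices of all chords \emph{outside} the move are unaffected, and moreover the three new chords themselves have indices that are determined by the local picture — I expect two of them to share one index value and the third to differ, or some similar bookkeeping, but in any case the move changes $\sum_n J_n$-data by adding at most three signed units distributed among the $J_n$'s. Hence $\sum_{n\ne 0}|J_n(K)-J_n(K')|$ changes by at most $3$ under one $v\Delta^\circ$-move, which gives the factor $\tfrac13$ in (i). For (ii), a $vp$-move (Figure~\ref{vpass}) involves four real crossings, two positive and two negative; by Lemma~\ref{lem-delta-pass} it is realized by $v\Delta^\circ$-moves, but for a sharp count I would instead analyze the $vp$-move directly on a Gauss diagram, verifying that it alters at most four chords and leaves the indices of the remaining chords fixed (the two positive/two negative structure is what forces the local endpoint-sign cancellation), so $\sum_{n\ne 0}|J_n(K)-J_n(K')|$ changes by at most $4$ per move, giving the factor $\tfrac14$.

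Granting these two per-move bounds, the proposition follows by a telescoping argument: if $K = K_0, K_1, \dots, K_d = K'$ is a shortest sequence of $X$-moves with $d = {\rm d}_X(K,K')$, then
\[
\sum_{n\ne 0}|J_n(K)-J_n(K')| \le \sum_{t=1}^{d} \sum_{n\ne 0}|J_n(K_{t-1})-J_n(K_t)| \le c\, d,
\]
with $c = 3$ for $X = v\Delta^\circ$ and $c = 4$ for $X = vp$, using the triangle inequality for each fixed $n$ and then summing over $n$. Setting $K' = O$ (the trivial knot, for which all $J_n$ vanish) yields the stated unknotting-number bounds.

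The main obstacle I anticipate is the second half of step (a): proving carefully that a $v\Delta^\circ$-move (and a $vp$-move) does \emph{not} change the index of any chord lying outside the local disk of the move. This is not automatic for arbitrary local moves on Gauss diagrams — rearranging endpoints generically shifts indices — so the argument must use the specific feature, already isolated in Lemma~\ref{lem-invariant} and visible in Figure~\ref{pf-lem-invariant}, that the endpoints introduced (or removed) by these moves come in adjacent pairs of opposite sign, whence any arc $\alpha$ used to compute an index either contains both endpoints of such a pair (net contribution $0$) or neither. Once that invariance is established, computing the indices of the three (or four) chords internal to the move is a routine finite check from the figures, and the telescoping conclusion is immediate.
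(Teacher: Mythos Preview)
Your proposal is correct and follows essentially the same approach as the paper: both argue that a single $v\Delta^\circ$-move (resp.\ $vp$-move) leaves the index of every chord outside the move unchanged---because the newly introduced endpoints come in adjacent opposite-sign pairs, exactly as you extract from Figure~\ref{pf-lem-invariant}---so only the three (resp.\ four) internal chords can contribute to $\sum_{n\ne 0}|J_n(K)-J_n(K')|$, bounding it by $3$ (resp.\ $4$) per move. One minor simplification: you need not actually compute the indices of the internal chords, since each such chord contributes at most $1$ to the sum regardless of its index; the paper's proof omits this computation accordingly.
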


\begin{proof}
(i) A $v\Delta^\circ$-move does not change the index 
of any chord except for the three chords involved in the move. 
See Figure~\ref{pf-lem-invariant} again.
Therefore if $K$ and $K'$ are related by a single $v\Delta^\circ$-move,
then we have $\sum_{n\ne 0} |J_n(K)-J_n(K')|\leq 3$.

\noindent(ii) A $vp$-move does not change the index 
of any chord except for the four chords involved in the move. 
See Figure~\ref{fig:delta-pass-n-writhe} as an example.
Therefore if $K$ and $K'$ are related by a single $vp$-move,
then we have $\sum_{n\ne 0} |J_n(K)-J_n(K')|\leq 4$.
\end{proof}

\begin{figure}[htbp]
\centering
    \begin{overpic}[width=6cm]{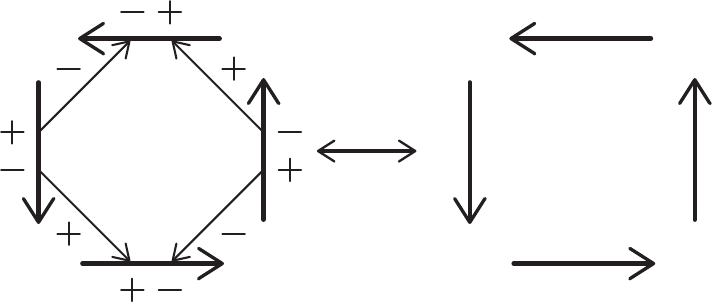}
      \put(81,43){$vp_{1}$}
    \end{overpic}
  \caption{An example of a $vp$-move}
  \label{fig:delta-pass-n-writhe}
\end{figure}

Theorem~\ref{thm-infinite} is decomposed into 
Theorems~\ref{thm-vdw}--\ref{thm-vp} as follows. 

\begin{theorem}\label{thm-vdw}
For any positive integer $m$, 
there are infinitely many oriented virtual knots $K$ 
with ${\rm u}_{v\Delta^\wedge}(K)=m$. 
\end{theorem}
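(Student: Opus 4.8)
The strategy is to exhibit an explicit one-parameter family of oriented virtual knots whose $v\Delta^\wedge$-unknotting number equals a prescribed value $m$, and then upgrade this to an infinite family by distinguishing the knots with a further invariant. The natural candidate is a family $\{K_m^{(t)}\}_{t\geq 1}$ built from Gauss diagrams with a controlled supply of index-$1$ chords: the lower bound ${\rm u}_{v\Delta^\wedge}(K)\geq \frac{1}{2}|J(K)|$ from Proposition~\ref{prop-lower-odd}(i) forces the unknotting number from below once we arrange $|J(K_m^{(t)})| = 2m$, while an explicit unknotting sequence of $m$ moves supplies the matching upper bound.

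\medskip
\noindent\textbf{Key steps.} First I would write down, on a single oriented circle, a Gauss diagram $G_m$ consisting of $m$ chords each of sign $+1$ and each of index $1$ (for instance, $m$ ``nested'' or ``parallel'' chords arranged so that the arc from initial to terminal endpoint of each one carries total endpoint-sign $+1$), together with whatever auxiliary chords of index $0$ are needed to realize the configuration as an actual virtual knot diagram; call the resulting oriented virtual knot $K_m$. A direct count gives $J_1(K_m)=m$ and $J_n(K_m)=0$ for $n\neq 1$, hence $J(K_m)=m$ — but since we need $|J|=2m$ for a clean statement, it is cleaner to double the chords or to use index-$1$ chords of sign $+1$ and verify $J(K_m)=2m$ directly; the precise bookkeeping is routine and I would not grind through it here. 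Second, I invoke Proposition~\ref{prop-lower-odd}(i) to get ${\rm u}_{v\Delta^\wedge}(K_m)\geq m$. Third, I construct an explicit sequence of $m$ $v\Delta^\wedge$-moves unknotting $K_m$: each move is applied near a triple of chords (using Lemma~\ref{lem-cc}(i), a single $v\Delta^\wedge$-move realizes a crossing change, and a crossing change on the appropriate chord cancels one index-$1$ contribution), so after $m$ moves and generalized Reidemeister moves the diagram is trivial. This yields ${\rm u}_{v\Delta^\wedge}(K_m)\leq m$, and combined with the lower bound, ${\rm u}_{v\Delta^\wedge}(K_m)=m$.

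\medskip
\noindent\textbf{From one knot to infinitely many.} To get infinitely many knots with the same unknotting number $m$, I would introduce a second parameter $t$ and modify the construction to $K_m^{(t)}$ by inserting $t$ extra ``balanced'' chords (say a chord of sign $+1$ and one of sign $-1$ with indices that contribute $0$ to $J$ but change some finer invariant, e.g. an affine index polynomial coefficient or a coefficient $J_n$ for a new index value $n$), chosen so that $J(K_m^{(t)})$ is still $2m$ — preserving both the lower bound $\geq m$ and the explicit unknotting sequence of length $m$ — while the $K_m^{(t)}$ are pairwise non-equivalent as virtual knots. The infinitude then follows from the infinitely many distinct values of the auxiliary invariant.

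\medskip
\noindent\textbf{Main obstacle.} The hard part is the explicit unknotting sequence: I must verify that $m$ carefully placed $v\Delta^\wedge$-moves (or equivalently, by Lemma~\ref{lem-cc}(i), $m$ crossing changes realized through $v\Delta^\wedge$-moves) actually reduce $K_m$ to the unknot, rather than merely killing the odd writhe — i.e. I need the family designed so that removing the index-$1$ chords one at a time genuinely trivializes the Gauss diagram up to Reidemeister and forbidden-type moves that are themselves consequences of $v\Delta^\wedge$. Getting a family where both the lower bound from $J$ is sharp \emph{and} the geometric unknotting is this transparent is the real design constraint; once the family is chosen well, the rest is bookkeeping with the invariants already developed in Sections~\ref{sec2}--\ref{sec4}.
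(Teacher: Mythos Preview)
Your overall strategy---lower bound from the odd writhe via Proposition~\ref{prop-lower-odd}(i), upper bound from an explicit unknotting sequence, and an auxiliary parameter to produce infinitely many examples---is exactly the shape of the paper's argument. However, the proposal has a genuine gap at the point you yourself flag as the ``main obstacle,'' and the mechanism you suggest for the upper bound does not work as written. Realizing a crossing change by a $v\Delta^\wedge$-move (Lemma~\ref{lem-cc}(i)) does not delete a chord; it flips its sign and reverses its orientation, turning an index-$1$ chord of sign $+1$ into an index-$(-1)$ chord of sign $-1$. So after your $m$ crossing changes the Gauss diagram still has $m$ nontrivial chords, and there is no reason it should be Reidemeister-equivalent to the trivial diagram. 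You acknowledge this but do not resolve it; ``once the family is chosen well, the rest is bookkeeping'' is precisely the step that carries all the content, and it is absent.

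The paper sidesteps this difficulty by a cleaner device: rather than routing through crossing changes, it builds a long virtual knot $T_s$ (for each $s\geq 2$) whose diagram contains three real crossings bounding a small region on which a \emph{single genuine} $v\Delta^\wedge$-move applies and deletes all three chords at once, leaving $T_s$ trivial. Taking the closure of the product of $m$ copies of $T_s$ yields $K_s(m)$, which is then unknotted by $m$ such moves, giving ${\rm u}_{v\Delta^\wedge}(K_s(m))\leq m$. For the lower bound and the infinitude, the paper simply invokes the earlier result \cite[Theorem~2.9]{NNSW} that ${\rm u}_{v\Delta}(K_s(m))=m$ and that the $K_s(m)$ are pairwise distinct; since every $v\Delta^\wedge$-move is in particular a $v\Delta$-move, ${\rm u}_{v\Delta^\wedge}\geq {\rm u}_{v\Delta}=m$. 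The moral: for the upper bound, design the building block so that a $v\Delta^\wedge$-move acts \emph{directly} (removing three chords) rather than indirectly via crossing changes.
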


\begin{proof}
For an integer $s\geq 2$, 
we consider a long virtual knot $T_s$ 
presented by a diagram 
as shown in the left of Figure~\ref{pf-thm-infinite-vdw}, 
where the vertical twists consist of 
$2s$ positive real crossings and $2s-1$ virtual crossings. 
By taking the closure of the product of $m$ copies of $T_s$, 
we obtain an oriented  virtual knot $K_s(m)$ as in the right of the figure. 

\begin{figure}[htbp]
\centering
    \begin{overpic}[width=11cm]{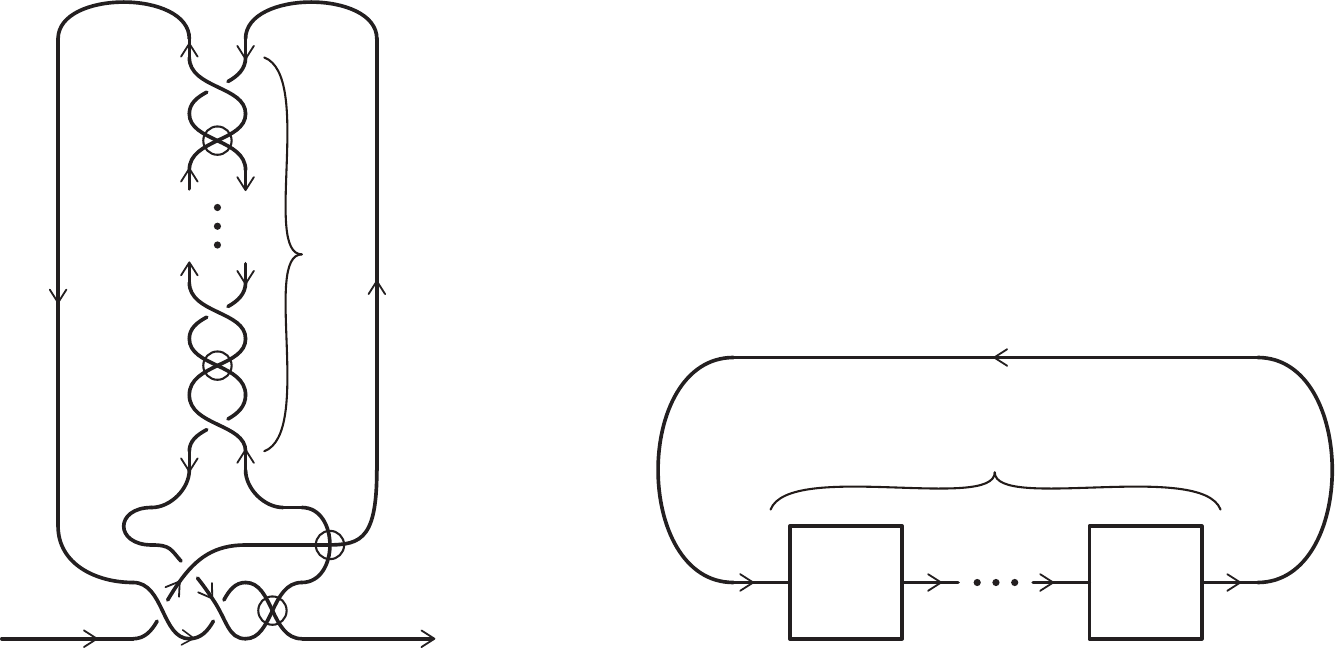}
      \put(74,127){\rotatebox{-90}{$4s-1$ crossings}}
      \put(42.5,6.5){$*$}
      \put(48,-13){$T_{s}$}
      \put(201,49){$m$ copies of $T_{s}$}
      \put(194.8,12){$T_{s}$}
      \put(265,12){$T_{s}$}
      \put(220,-13){$K_s(m)$}
    \end{overpic}
  \vspace{1em}
  \caption{Diagrams of $T_{s}$ and $K_s(m)$}
  \label{pf-thm-infinite-vdw}
\end{figure}

As shown in the proof of~\cite[Theorem~2.9]{NNSW}, 
the set $\{K_s(m)\mid s\geq 2\}$ gives an infinite family of 
oriented virtual knots with ${\rm u}_{v\Delta}(K_s(m))=m$. 
Since the long knot diagram of $T_s$ can be unknotted 
by a $v\Delta^\wedge$-move for the three real crossings 
around the region with the mark $*$, 
we have ${\rm u}_{v\Delta^\wedge}(K_s(m))=m$. 
\end{proof}

\begin{theorem}\label{thm-vdc}
For any positive integer $m$, 
there are infinitely many oriented virtual knots $K$ 
with ${\rm u}_{v\Delta^\circ}(K)=m$. 
\end{theorem}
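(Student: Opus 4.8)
The plan is to mimic the construction used in the proof of Theorem~\ref{thm-vdw}, but to arrange the local tangle so that the relevant three strands are oriented in the $v\Delta^\circ$ pattern rather than the $v\Delta^\wedge$ pattern. First I would produce a long virtual knot $T_s'$, depending on an integer parameter $s\ge 2$, whose diagram contains a vertical twist region built from $2s$ real crossings and $2s-1$ virtual crossings, chosen so that (a) the closure of $m$ parallel copies of $T_s'$ has $n$-writhes that grow with $s$ in a way making the family $\{K_s'(m)\mid s\ge 2\}$ pairwise non-equivalent, and (b) there is a distinguished region, marked $*$, where three real crossings appear with orientations matching one of $v\Delta_1^\circ,\dots,v\Delta_4^\circ$, so that a single $v\Delta^\circ$-move applied there unknots the long knot $T_s'$.

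Next I would establish the upper bound ${\rm u}_{v\Delta^\circ}(K_s'(m))\le m$: applying one $v\Delta^\circ$-move in the $*$-region of each of the $m$ copies simultaneously unknots the closure, exactly as in Theorem~\ref{thm-vdw}. For the lower bound I would invoke Proposition~\ref{prop-lower-writhe}(i): it suffices to design $T_s'$ so that $\sum_{n\ne 0}|J_n(K_s'(m))|\ge 3m$, whence ${\rm u}_{v\Delta^\circ}(K_s'(m))\ge m$. Since the $n$-writhes are additive under the closure-of-product operation (the twist regions of distinct copies contribute chords with disjoint index sets, or one can compute directly from the Gauss diagram), it is enough to check that a single copy $T_s'$ contributes the right amount, i.e. that its associated chords have nonzero indices and total absolute sign-sum at least $3$; this is a finite Gauss-diagram computation for fixed $s$. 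Combining the two bounds gives ${\rm u}_{v\Delta^\circ}(K_s'(m))=m$ for all $s\ge 2$.

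Finally, to see that the family is infinite (i.e.\ that $K_s'(m)$ and $K_{s'}'(m)$ are distinct virtual knots for $s\ne s'$), I would compute the full sequence of $n$-writhes $(J_n(K_s'(m)))_{n\ne 0}$ and observe that the twist parameter $s$ controls the index values $n$ at which the $n$-writhe is nonzero; since these index sets differ for different $s$, the knots are pairwise non-isotopic. The main obstacle I anticipate is the first step: rigging the local tangle so that the orientations near $*$ genuinely realize a $v\Delta^\circ$-configuration (not a $v\Delta^\wedge$-one) while still keeping the twist region's contribution to $\sum_{n\ne 0}|J_n|$ equal to (a multiple of) $3$ and not accidentally larger, since an overshoot would break the sharpness ${\rm u}_{v\Delta^\circ}=m$. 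Once a correctly oriented model tangle is found, the rest is a routine adaptation of the $v\Delta^\wedge$ argument together with bookkeeping of indices on the Gauss diagram.
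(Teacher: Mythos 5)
Your strategy is exactly the one the paper uses: build a one-parameter family of long virtual knots $T_s$, each unknottable by a single $v\Delta^\circ$-move, take $K_s(m)$ to be the closure of the product of $m$ copies, get the upper bound ${\rm u}_{v\Delta^\circ}(K_s(m))\le m$ from the $m$ local moves, get the lower bound from Proposition~\ref{prop-lower-writhe}(i) by arranging $\sum_{n\ne 0}|J_n(K_s(m))|=3m$, and distinguish the knots for different $s$ by the locations of the nonzero $n$-writhes. Your observation that the $n$-writhes of the closure of a product are $m$ times those of a single copy (because chords from distinct copies do not interleave) is also how the paper computes.

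The one thing you have not actually done is the step you yourself flag as the obstacle: exhibiting a concrete $T_s$. That is the entire content of the paper's proof, and it does \emph{not} reuse the twist pattern of Theorem~\ref{thm-vdw} ($2s$ real and $2s-1$ virtual crossings); instead the paper's $T_s$ has $4s+3$ chords, namely two blocks of $2s$ positive chords $a_i$ and $c_k$ all of index $0$ (so they contribute nothing to any $J_n$), together with three negative chords $b_1,b_2,b_3$ in a $v\Delta^\circ$-configuration with ${\rm Ind}(b_1)=-4s$ and ${\rm Ind}(b_2)={\rm Ind}(b_3)=2s$. This gives $J_{2s}(K_s(m))=-2m$, $J_{-4s}(K_s(m))=-m$, and $J_n=0$ otherwise, so $\frac{1}{3}\sum_{n\ne0}|J_n|=m$ exactly, and $J_{2s}$ separates $K_s(m)$ from $K_{s'}(m)$ for $s>s'$. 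Two small corrections to your discussion: (a) your worry that an ``overshoot'' $\sum_{n\ne0}|J_n|>3m$ would ``break sharpness'' is moot --- it cannot happen for any knot admitting an $m$-move unknotting sequence, since Proposition~\ref{prop-lower-writhe}(i) would then contradict the upper bound; the real design constraint is only to reach $3m$, not to avoid exceeding it; (b) the burden of realizing a $v\Delta^\circ$- rather than $v\Delta^\wedge$-configuration is carried entirely by the local arrangement of the three crossings $b_1,b_2,b_3$ and is independent of the auxiliary twist regions, whose only job is to push the indices of the $b_j$ apart so that the family is infinite. As written, your proposal is a correct plan but not yet a proof, because the existence of a tangle with all the listed properties is asserted rather than demonstrated.
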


\begin{proof}
For an integer $s\geq 1$, let $T_s$ be a long virtual knot presented by 
a diagram as shown in the top of Figure~\ref{pf-thm-infinite-vdc}. 
Then its Gauss diagram is shown 
in the bottom of the figure, 
and has $4s+3$ chords $a_i$ $(i=1,2,\ldots,2s)$, 
$b_j$ $(j=1,2,3)$, and $c_k$ $(k=1,2,\ldots,2s)$ 
with signs 
\[\e(a_i)=+1, \ \e(b_j)=-1, \text{ and }\e(c_k)=+1,\]
where $\e(\gamma)$ denotes the sign of a chord $\gamma$. 
Let $K_s(m)$ be an oriented virtual knot as the closure of the product 
of $m$ copies of $T_s$.

\begin{figure}[htbp]
\centering
    \begin{overpic}[width=11cm]{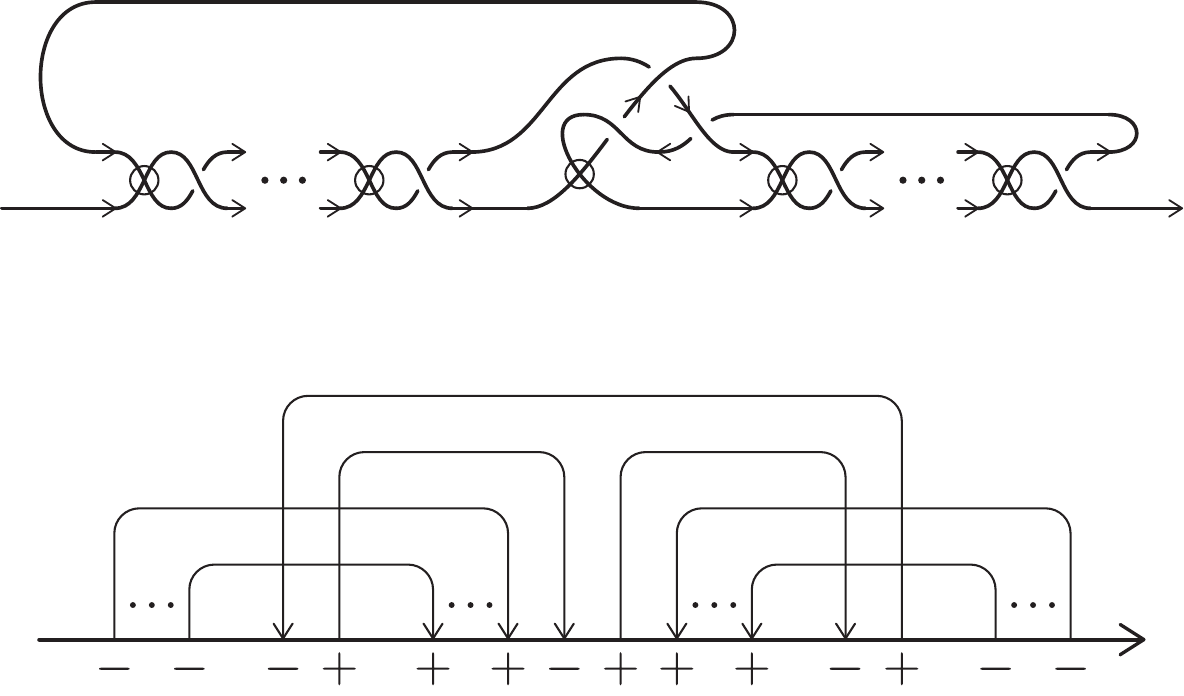}
      \put(49,122){$a_{1}$}
      \put(107,122){$a_{2s}$}
      \put(219,122){$c_{1}$}
      \put(277,122){$c_{2s}$}
      \put(161,138){$b_{1}$}
      \put(171,172){$b_{2}$}
      \put(183,138){$b_{3}$}
      \put(18,22){$a_{1}$}
      \put(53,22){$a_{2s}$}
      \put(168,22){$c_{1}$}
      \put(202,22){$c_{2s}$}
      \put(115,69){$b_{2}$}
      \put(190,69){$b_{3}$}
      \put(154,85){$b_{1}$}
    \end{overpic}
  \caption{A diagram of $T_{s}$ and its Gauss diagram}
  \label{pf-thm-infinite-vdc}
\end{figure}

We can apply a $v\Delta^\circ$-move to the three real crossings $b_1$,  $b_2$, 
and $b_3$ on the long knot diagram 
so that $T_s$ becomes unknotted. 
Thus we have ${\rm u}_{v\Delta^\circ}(K_s(m))\leq m$.

To prove ${\rm u}_{v\Delta^\circ}(K_s(m))\geq m$, 
we will calculate the $n$-writhe of $K_s(m)$ as follows. 
Since we have 
$${\rm Ind}(a_i)={\rm Ind}(c_k)=0,\ 
{\rm Ind}(b_1)=-4s,\text{ and } {\rm Ind}(b_2)={\rm Ind}(b_3)=2s,$$
it holds that 
$$J_n(K_s(m))=
\begin{cases}
-2m & \text{if }n=2s,\\
-m & \text{if }n=-4s, \\
0 & \text{otherwise}.
\end{cases}$$
By Proposition~\ref{prop-lower-writhe}(i), 
we have 
${\rm u}_{v\Delta^\circ}(K_s(m))\geq \frac{1}{3}(2m+m)=m$, 
and hence ${\rm u}_{v\Delta^\circ}(K_s(m))=m$. 

Furthermore for any $s> s'$, since 
$$J_{2s}(K_s(m))=-2m\neq 0=J_{2s}(K_{s'}(m))$$ holds, 
we have $K_s(m)\neq K_{s'}(m)$.
\end{proof}

\begin{theorem}\label{thm-vs}
For any positive integer $m$, 
there are infinitely many oriented virtual knots $K$ 
with ${\rm u}_{v\sharp}(K)=m$. 
\end{theorem}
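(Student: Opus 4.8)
The plan is to mimic the strategy already used in the proofs of Theorems~\ref{thm-vdw} and~\ref{thm-vdc}: exhibit an explicit infinite family of long virtual knots $\{T_s\}$ whose closures of $m$-fold products $K_s(m)$ have $v\sharp$-unknotting number exactly $m$, with the $K_s(m)$ pairwise distinct for fixed $m$. For the upper bound ${\rm u}_{v\sharp}(K_s(m))\le m$ I would design $T_s$ so that the long knot diagram contains, near a marked region, four real crossings arranged as one of the patterns in Figure~\ref{vsharp}, so that a single $v\sharp$-move unknots one copy of $T_s$; taking the product of $m$ copies then gives an explicit sequence of $m$ $v\sharp$-moves trivializing $K_s(m)$. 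For the lower bound ${\rm u}_{v\sharp}(K_s(m))\ge m$ I would compute the odd writhe $J(K_s(m))$ from the Gauss diagram and invoke Proposition~\ref{prop-lower-odd}(ii), which gives ${\rm u}_{v\sharp}(K)\ge\frac14|J(K)|$; so I need a family with $|J(K_s(m))|=4m$.

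Concretely, the key computation is to build the Gauss diagram of $T_s$ with a small bundle of chords (analogous to the $b_1,b_2,b_3$ in Theorem~\ref{thm-vdc}, but now four chords matching a $v\sharp$-pattern) surrounded by $s$-dependent ``padding'' chords of index $0$ that do not affect the odd writhe but force distinctness. The four special chords should be arranged so that their indices, as $s$ grows, land on values $\pm$(some multiple of $s$), giving for instance $J_{2s}(K_s(m))$ or $J_{-4s}(K_s(m))$ a nonzero value depending on $m$; summing the odd-index contributions must yield $J(K_s(m))=\pm 4m$ (one needs all four special chords to contribute to odd-index writhes and the padding to contribute nothing to $J$, which is arranged by making the padding indices even or zero). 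Then Proposition~\ref{prop-lower-odd}(ii) gives ${\rm u}_{v\sharp}(K_s(m))\ge\frac14\cdot 4m=m$, matching the upper bound, so ${\rm u}_{v\sharp}(K_s(m))=m$. Finally, for $s>s'$ one checks $J_{n}(K_s(m))\ne J_{n}(K_{s'}(m))$ for $n$ equal to one of the $s$-dependent indices (e.g. $n=2s$), whence $K_s(m)\ne K_{s'}(m)$ and the family is infinite.

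The main obstacle I expect is the geometric design of $T_s$: one must realize four real crossings that simultaneously (a) form exactly one of the two oriented $v\sharp$-patterns of Figure~\ref{vsharp} so that a single $v\sharp$-move applies and unknots a factor, and (b) have indices that add up correctly across the $m$-fold product to produce total odd writhe $4m$ rather than $2m$ or $0$. Getting the index bookkeeping right is delicate because index is computed along arcs of the closed Gauss diagram and the $m$-fold product concatenates copies; one has to verify the ``padding'' chords really contribute index $0$ to every special chord and that the special chords' mutual contributions do not cancel in the odd-writhe sum. I would handle this the same way Theorem~\ref{thm-vdc} does: draw the long-knot diagram and its Gauss diagram explicitly in a figure, read off $\e(\gamma)$ and ${\rm Ind}(\gamma)$ for each chord, and then compute $J_n(K_s(m))$ directly. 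Once the family is correctly specified, both bounds and the distinctness claim follow immediately from the already-established Proposition~\ref{prop-lower-odd}(ii) and the explicit unknotting sequence.
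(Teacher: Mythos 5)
Your strategy is exactly the paper's: close up $m$-fold products of a long virtual knot $T_s$, get the upper bound from one $v\sharp$-move per factor, get the lower bound from Proposition~\ref{prop-lower-odd}(ii) by arranging $|J(K_s(m))|=4m$, and separate the $K_s(m)$ for distinct $s$ via an $s$-dependent $n$-writhe. However, as you yourself flag, the entire mathematical content of this proof is the explicit construction of $T_s$, and your proposal stops at describing the desiderata rather than producing a diagram that meets them. Everything else (the two propositions, the product/closure trick) is already in place, so without the construction there is no proof; the ``obstacle'' you defer is the theorem.

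For the record, the paper resolves it as follows (Figure~\ref{pf-thm-infinite-vs}): for $s\ge 3$, $T_s$ has $2s+6$ chords, namely $2s$ padding chords $a_i$ with $\e(a_i)=+1$ and ${\rm Ind}(a_i)=2$ (even, so invisible to $J$), two auxiliary chords $c_1,c_2$ with $\e(c_1)=+1$, $\e(c_2)=-1$, ${\rm Ind}(c_1)=-3$, ${\rm Ind}(c_2)=1$, and four positive chords $b_1,\dots,b_4$ forming a $v\sharp$-pattern with ${\rm Ind}(b_1)={\rm Ind}(b_2)=1$ and ${\rm Ind}(b_3)={\rm Ind}(b_4)=-2s+1$. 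One point where your sketch is slightly off: the paper does \emph{not} arrange that the non-special chords contribute nothing to the odd writhe individually --- $c_1$ and $c_2$ both have odd index --- but their contributions cancel ($+m-m$), while the four $b_j$ contribute $2m+2m$, giving $J(K_s(m))=m+m+2m=4m$ after collecting $J_1$, $J_{-3}$, and $J_{-2s+1}$. Distinctness then comes from $J_{-2s+1}(K_s(m))=2m\ne 0$, i.e.\ from the special chords themselves rather than from the padding. So your plan is the right one, but you would still need to draw (or otherwise specify) a Gauss diagram realizing such index data with the four $b_j$ genuinely forming one of the two oriented $v\sharp$-patterns of Figure~\ref{vsharp}; that verification is not automatic and is what the figure in the paper supplies.
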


\begin{proof}
For an integer $s\geq 3$, let $T_s$ be  
a long virtual knot presented by 
a diagram as shown in the top of Figure~\ref{pf-thm-infinite-vs}. 
Then its Gauss diagram is shown 
in the bottom of the figure, 
and has $2s+6$ chords $a_i$ $(i=1,2,\ldots,2s)$, 
$b_j$ $(j=1,2,3,4)$, and $c_k$ $(k=1,2)$ 
with signs 
\[\e(a_i)=\e(b_j)=\e(c_1)=+1, \text{ and }\e(c_2)=-1.\] 
Let $K_s(m)$ be an oriented virtual knot as the closure of the product 
of $m$ copies of $T_s$.

\begin{figure}[htbp]
\centering
    \begin{overpic}[width=10cm]{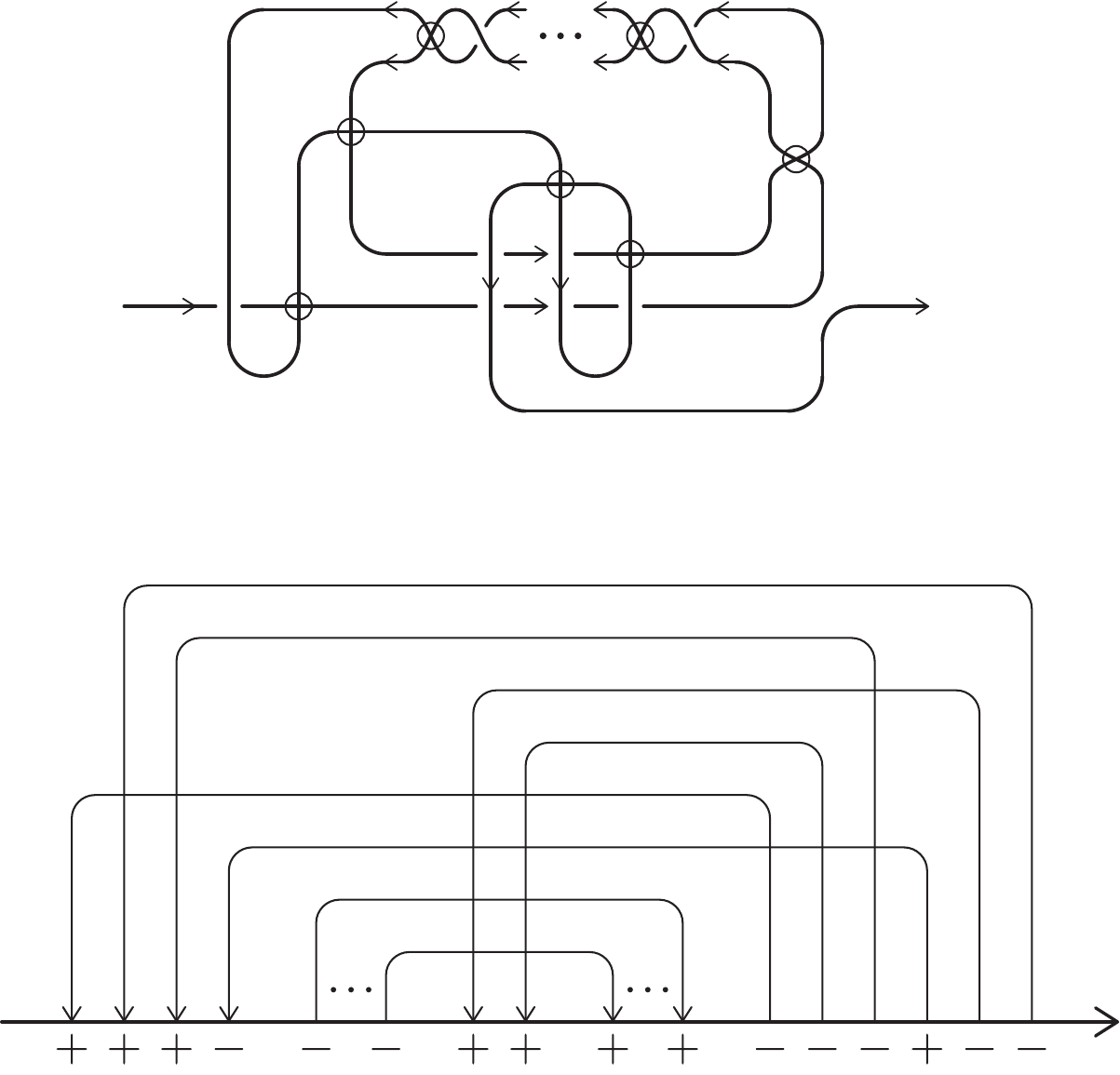}
      \put(48,186){$c_{1}$}
      \put(112,186){$b_{1}$}
      \put(146,186){$b_{2}$}
      \put(164,186){$c_{2}$}
      \put(171,251){$a_{1}$}
      \put(117,251){$a_{2s}$}
      \put(112,214){$b_{3}$}
      \put(146,214){$b_{4}$}
      \put(103,75){$c_{1}$}
      \put(103,61){$c_{2}$}
      \put(146,128){$b_{1}$}
      \put(146,114.5){$b_{2}$}
      \put(146,101.5){$b_{3}$}
      \put(146,88){$b_{4}$}
      \put(68,20){$a_{1}$}
      \put(101,20){$a_{2s}$}
    \end{overpic}
  \caption{A diagram of $T_{s}$ and its Gauss diagram}
  \label{pf-thm-infinite-vs}
\end{figure}

We can apply a $v\sharp$-move to $b_1$, $b_2$, $b_3$,  
and $b_4$ so that $T_s$ becomes unknotted. 
Thus we have ${\rm u}_{v\sharp}(K_s(m))\leq m$.

On the other hand, since we have 
\begin{center}
\begin{tabular}{l}
${\rm Ind}(a_i)=2$, \  
${\rm Ind}(b_1)={\rm Ind}(b_2)={\rm Ind}(c_2)=1$, 
\medskip \\
${\rm Ind}(b_3)={\rm Ind}(b_4)=-2s+1$, and ${\rm Ind}(c_1)=-3$, 
\end{tabular}
\end{center}
it holds that 
$$J_n(K_s(m))=
\begin{cases}
2ms & \text{if }n=2,\\
m & \text{if }n=1, -3, \\
2m & \text{if }n=-2s+1, \\
0 & \text{otherwise}.
\end{cases}$$
This induces $J(K_s(m))=m+m+2m=4m$. 
Therefore we have ${\rm u}_{v\sharp}(K_s(m))\geq m$ 
by Proposition~\ref{prop-lower-odd}(ii), 
and hence ${\rm u}_{v\sharp}(K_s(m))= m$.

Furthermore for any $s> s'$, since 
$$J_{-2s+1}(K_s(m))=2m\neq 0=J_{-2s+1}(K_{s'}(m))$$ holds, 
we have $K_s(m)\neq K_{s'}(m)$.
\end{proof}

\begin{theorem}\label{thm-vp}
For any positive integer $m$, 
there are infinitely many oriented virtual knots $K$ 
with ${\rm u}_{vp}(K)=m$. 
\end{theorem}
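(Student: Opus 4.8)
The plan is to adapt the argument used for Theorems~\ref{thm-vdc} and~\ref{thm-vs}. For each integer $s$ large enough, I would construct a long virtual knot $T_s$, given both by an explicit diagram and by its Gauss diagram, with the following properties: the long-knot diagram of $T_s$ can be made trivial by a single $vp$-move applied to four designated real crossings $b_1,b_2,b_3,b_4$; the Gauss diagram has, besides $b_1,\dots,b_4$, a block $a_1,\dots,a_{2s}$ of ``parallel'' chords coming from a twist region, each of index $0$; and the four chords $b_j$ carry the signs forced by a $vp_1$-configuration (two positive and two negative real crossings). I would then let $K_s(m)$ be the oriented virtual knot obtained as the closure of the product of $m$ copies of $T_s$.

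The upper bound ${\rm u}_{vp}(K_s(m))\leq m$ is then immediate: performing the unknotting $vp$-move on the block $b_1,\dots,b_4$ in each of the $m$ copies yields the trivial knot. For the matching lower bound I would compute the indices of all chords in a Gauss diagram of $K_s(m)$. The chords $a_i$ have index $0$ and hence contribute to $J_n$ for no $n\neq 0$. The chords $b_1,\dots,b_4$ in each copy should be arranged to have indices and signs producing a total contribution of exactly $4$ to $\sum_{n\neq 0}|J_n|$ — for instance by giving them four distinct nonzero indices, each yielding $J_n=\pm 1$ per copy — so that summing over the $m$ copies gives $\sum_{n\neq 0}|J_n(K_s(m))|=4m$. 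Proposition~\ref{prop-lower-writhe}(ii) then yields ${\rm u}_{vp}(K_s(m))\geq \tfrac14\cdot 4m=m$, and combined with the upper bound we obtain ${\rm u}_{vp}(K_s(m))=m$. (Alternatively, if the $b_j$ are arranged to have odd indices, one may invoke the odd-writhe bound of Proposition~\ref{prop-lower-odd}(ii) instead.)

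To see that the knots $K_s(m)$ are pairwise distinct for fixed $m$, I would arrange one of the indices of the $b$-chords to depend on $s$, say to equal $\pm(2s+1)$ or $\pm 2s$; then for $s>s'$ the corresponding $n$-writhe is nonzero for $K_s(m)$ but vanishes for $K_{s'}(m)$, so $K_s(m)\neq K_{s'}(m)$, and the family $\{K_s(m)\}_s$ is infinite.

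The step I expect to be the main obstacle is the explicit construction of $T_s$: one must exhibit a long-knot diagram whose Gauss diagram simultaneously (a) is unknotted by a single $vp$-move on four crossings, (b) has the twist-region chords $a_i$ of index $0$ and the four chords $b_j$ of the prescribed signs and of controlled, $s$-dependent indices, and (c) produces $\sum_{n\neq 0}|J_n|=4m$ after taking the $m$-fold closure, with no unwanted cancellations among the $J_n$. Verifying the index computation — that inserting the $2s$-crossing twist region does not disturb the relevant indices, and that the closure of the product multiplies all $n$-writhes by $m$ — is routine but requires care; everything else is a formal consequence of Propositions~\ref{prop-lower-odd} and~\ref{prop-lower-writhe} together with the unknotting move.
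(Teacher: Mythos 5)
Your plan coincides with the paper's proof in every structural respect: build a long virtual knot $T_s$ that a single $vp$-move on four designated crossings $b_1,\dots,b_4$ unknots, close up the product of $m$ copies to get $K_s(m)$, bound ${\rm u}_{vp}(K_s(m))$ below by $\frac14\sum_{n\ne0}|J_n|=m$ via Proposition~\ref{prop-lower-writhe}(ii), and separate the $K_s(m)$ for different $s$ by an $s$-dependent $n$-writhe. But the proposal stops exactly where the mathematical content begins: you do not exhibit $T_s$, and you yourself flag its construction as the main obstacle. Without a concrete diagram whose Gauss diagram you can read off, the claims that the twist-block chords have index $0$, that the $b_j$ have controlled $s$-dependent indices, and that no cancellation occurs in $\sum_{n\ne 0}|J_n|$ are unverified hypotheses, so as written this is a template for the paper's argument rather than a proof.

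For the record, the paper's $T_s$ (Figure~\ref{pf-thm-infinite-vp}) has $4s+8$ chords $a_1,\dots,a_{2s}$, $b_1,\dots,b_4$, $c_1,c_2$, $d_1,\dots,d_{2s+2}$ with $\varepsilon(b_1)=\varepsilon(b_3)=-1$ and all other signs $+1$; the indices are ${\rm Ind}(a_i)={\rm Ind}(c_k)={\rm Ind}(d_\ell)=0$, ${\rm Ind}(b_1)=2s$, ${\rm Ind}(b_2)={\rm Ind}(b_4)=2s+2$, ${\rm Ind}(b_3)=2s+4$, giving $J_{2s}(K_s(m))=J_{2s+4}(K_s(m))=-m$, $J_{2s+2}(K_s(m))=2m$, hence $\sum_{n\ne0}|J_n|=4m$, and distinctness from $J_{2s+4}$. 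Two details differ from your sketch: the construction needs auxiliary index-zero chords ($c_k$ and $d_\ell$) besides the twist block, and the four $b_j$ do \emph{not} receive four distinct indices --- $b_2$ and $b_4$ share index $2s+2$, but since they also share sign their contributions add rather than cancel, which is all that is required to reach the total $4m$. Finally, your parenthetical fallback cites Proposition~\ref{prop-lower-odd}(ii), which is the $v\sharp$ bound; for $vp$ the relevant odd-writhe bound is Proposition~\ref{prop-lower-odd}(i), namely $\frac12|J|$, so an odd-index arrangement would need $|J(K_s(m))|=2m$ to recover the same conclusion.
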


\begin{proof}
For an integer $s\geq 1$, 
let $T_s$ be a long virtual knot presented by a diagram 
as shown in the top of Figure~\ref{pf-thm-infinite-vp}. 
Then its Gauss diagram is shown in the bottom of the figure, 
and has $4s+8$ chords 
$a_i$ $(i=1,2,\dots,2s)$, 
$b_j$ $(j=1,2,3,4)$, 
$c_k$ $(k=1,2)$, and 
$d_{\ell}$ $(\ell=1,2,\dots,2s+2)$ 
with signs 
\[\e(a_i)=\e(b_2)=\e(b_4)=\e(c_k)=\e(d_{\ell})=+1 \text{ and }
\e(b_1)=\e(b_3)=-1.\]
Let $K_s(m)$ be an oriented virtual knot as the closure of 
the product of $m$ copies of $T_s$. 

\begin{figure}[htbp]
\centering
    \begin{overpic}[width=12cm]{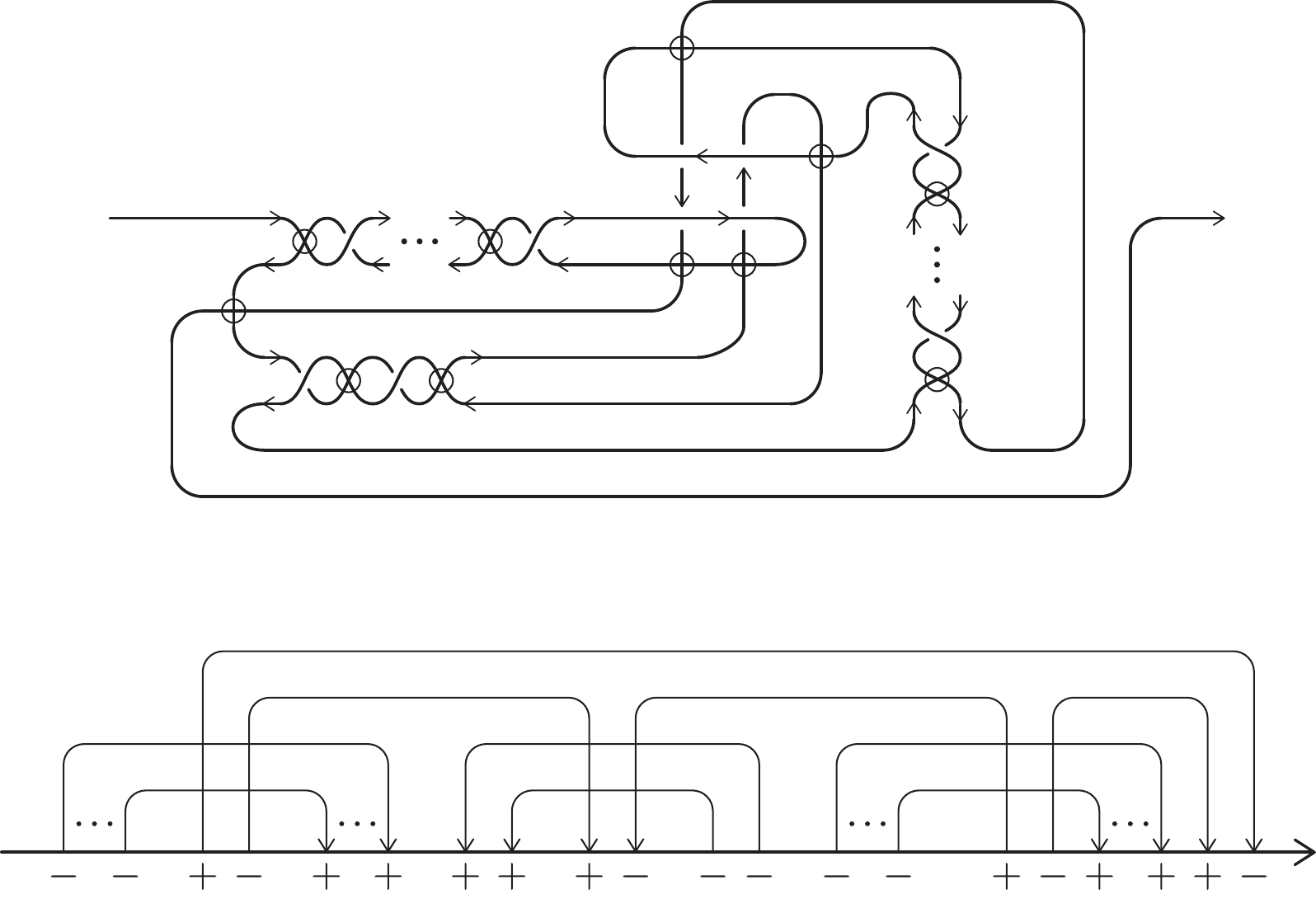}
      \put(86,181.5){$a_{1}$}
      \put(132,181.5){$a_{2s}$}
      \put(165,168){$b_{1}$}
      \put(196,168){$b_{2}$}
      \put(196,196){$b_{3}$}
      \put(165,196){$b_{4}$}
      \put(75,144.5){$c_{1}$}
      \put(99,144.5){$c_{2}$}
      \put(253,192){$d_{2s+2}$} 
      \put(253,144){$d_{1}$}
      \put(5,17){$a_{1}$}
      \put(35,17){$a_{2s}$}
      \put(109,17){$c_{1}$}
      \put(136,17){$c_{2}$}
      \put(205,17){$d_{1}$}
      \put(235,17){$d_{2s+2}$}
      \put(185,69){$b_{1}$}
      \put(105,55.5){$b_{2}$}
      \put(205,55.5){$b_{3}$}
      \put(289,55.5){$b_{4}$}
    \end{overpic}
  \caption{A diagram of $T_{s}$ and its Gauss diagram}
  \label{pf-thm-infinite-vp}
\end{figure}

We can apply a $vp$-move to 
$b_1$, $b_2$, $b_3$, and $b_4$ 
so that $T_s$ becomes unknotted. 
Thus we have 
${\rm u}_{vp}(K_s(m))\leq m$. 

On the other hand, since we have 
\[\begin{split}
&{\rm Ind}(a_i)={\rm Ind}(c_k)={\rm Ind}(d_\ell)=0, \ 
{\rm Ind}(b_1)=2s, \\
&{\rm Ind}(b_2)={\rm Ind}(b_4)=2s+2, \text{ and }
{\rm Ind}(b_3)=2s+4, 
\end{split}\]
it holds that 
$$J_n(K_s(m))=
\begin{cases}
-m & \text{if }n=2s, 2s+4,\\
2m & \text{if }n=2s+2, \\
0 & \text{otherwise}.
\end{cases}$$
Therefore we have ${\rm u}_{vp}(K_s(m))\geq 
\frac{1}{4}(m+m+2m)=m$ 
by Proposition~\ref{prop-lower-writhe}(ii), 
and hence ${\rm u}_{vp}(K_s(m))=m$.

Furthermore for any $s>s'$, since 
$$J_{2s+4}(K_s(m))=-m\neq 0=J_{2s+4}(K_{s'}(m))$$ holds, 
we have $K_s(m)\neq K_{s'}(m)$. 
\end{proof}

We remark that the oriented virtual knots $K=K_s(m)$ 
constructed in the proof of Theorem~\ref{thm-vdc} satisfy 
\[{\rm u}_{v\Delta^\circ}(K)> \frac{1}{2}|J(K)|
\text{ and }
{\rm u}_{v\Delta^\circ}(K)= \frac{1}{3}
\sum_{n\ne 0}|J_n(K)|=m.\]
In fact, we have $J(K)=0$. 
Generally the two lower bounds for ${\rm u}_{v\Delta^\circ}(K)$ 
given in Propositions~\ref{prop-lower-odd}(i) 
and \ref{prop-lower-writhe}(i) are independent 
in the following sense. 

\begin{proposition}
For any positive integer $m$, 
there are infinitely many oriented virtual knots $K$ 
with 
\[{\rm u}_{v\Delta^\circ}(K)= \frac{1}{2}|J(K)|=m
\text{ and }
{\rm u}_{v\Delta^\circ}(K)> \frac{1}{3}
\sum_{n\ne 0}|J_n(K)|.\]
\end{proposition}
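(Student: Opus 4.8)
The plan is to imitate the proofs of Theorems~\ref{thm-vdc}--\ref{thm-vp}: for each integer $s\ge 1$ we build a long virtual knot $T_s$ that is trivialized by a single $v\Delta^\circ$-move, and we let $K_s(m)$ be the oriented virtual knot obtained as the closure of the product of $m$ copies of $T_s$, so that ${\rm u}_{v\Delta^\circ}(K_s(m))\le m$. The one new requirement concerns the three chords $b_1,b_2,b_3$ of this $v\Delta^\circ$-move: they must have a common sign (say $+1$), their indices must be ${\rm Ind}(b_1)=0$, ${\rm Ind}(b_2)=2s+1$, and ${\rm Ind}(b_3)=-(2s+1)$, and every remaining chord of the Gauss diagram of $T_s$ must have index $0$ or occur in a pair of chords with the same nonzero index and opposite signs, so that it is invisible to every nonzero $n$-writhe. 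Such a $T_s$ can be produced by a diagram in the spirit of Figure~\ref{pf-thm-infinite-vdc}: one inserts a $v\Delta^\circ$-triple into the trivial long knot and adds $s$ Reidemeister~II chord pairs straddling $b_2$ and $b_3$, contributing $2s$ and $-2s$ to their indices and $0$ to ${\rm Ind}(b_1)$, while the leftover $\pm 1$ parities come from the mutual position of $b_2$ and $b_3$ inside the triple; once the $v\Delta^\circ$-move removes $b_1,b_2,b_3$, the Reidemeister~II pairs cancel and $T_s$ becomes trivial.

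Granting such a $T_s$, the rest is routine. As in Theorems~\ref{thm-vdc}--\ref{thm-vp}, each $n$-writhe of $K_s(m)$ equals $m$ times the corresponding per-copy contribution, and by construction this contribution comes only from the triple, since $b_1$ has index $0$ and the auxiliary chords cancel at every nonzero index. Hence $J_n(K_s(m))=m$ for $n=2s+1$ and for $n=-(2s+1)$, and $J_n(K_s(m))=0$ otherwise; in particular $J(K_s(m))=2m$ and $\sum_{n\ne 0}|J_n(K_s(m))|=2m$. By Proposition~\ref{prop-lower-odd}(i) we get ${\rm u}_{v\Delta^\circ}(K_s(m))\ge \tfrac12|J(K_s(m))|=m$, which together with ${\rm u}_{v\Delta^\circ}(K_s(m))\le m$ yields ${\rm u}_{v\Delta^\circ}(K_s(m))=\tfrac12|J(K_s(m))|=m$. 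On the other hand $\tfrac13\sum_{n\ne 0}|J_n(K_s(m))|=\tfrac{2m}{3}<m={\rm u}_{v\Delta^\circ}(K_s(m))$, which is exactly the claimed strict inequality. Finally, for $s\ne s'$ we have $J_{2s+1}(K_s(m))=m\ne 0=J_{2s+1}(K_{s'}(m))$, so the knots $K_s(m)$ are pairwise distinct and $\{K_s(m)\mid s\ge 1\}$ is infinite.

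The part that must be carried out carefully is the explicit construction of $T_s$ together with the verification of the three index values. The critical point is ${\rm Ind}(b_1)=0$: if $b_1$ had a nonzero index it would contribute a further $m$ to $\sum_{n\ne 0}|J_n(K_s(m))|$, raising it to $3m$ and collapsing the strict inequality into an equality; so the index pattern of the triple must be exactly $\{2s+1,\,-(2s+1),\,0\}$ rather than, say, $\{2s+1,\,2s+1,\,-(4s+2)\}$. This pattern is compatible with the index data in Theorem~\ref{thm-vdc}, where the three indices $-4s,\,2s,\,2s$ likewise sum to $0$, and the delicate step is to place the auxiliary chords so that precisely this pattern is realized while they remain invisible to every nonzero $n$-writhe. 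Once this is done, the two lower bounds of Propositions~\ref{prop-lower-odd}(i) and~\ref{prop-lower-writhe}(i) are applied exactly as above, and no new estimate is needed.
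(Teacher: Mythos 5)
Your proposal is correct and follows essentially the same route as the paper: the paper's $T_s$ (Figure~\ref{ex-infinite-vdc}) is exactly a long knot of the kind you postulate, with all chords positive, the triple $b_1,b_2,b_3$ having index multiset $\{2s-1,\,-(2s-1),\,0\}$ (rather than your $\{2s+1,\,-(2s+1),\,0\}$), and all auxiliary chords $a_i,c_k$ of index $0$, yielding $J_n(K_s(m))=m$ for $n=\pm(2s-1)$ and $0$ otherwise, hence $J=2m$ and $\sum_{n\ne 0}|J_n|=2m$ just as in your computation. The only part you leave as a sketch --- the explicit diagram realizing the index pattern with one index-$0$ chord in the triple --- is precisely what the paper supplies by its figure, so your identification of that as the delicate step is accurate and the rest of your argument matches the paper's verbatim.
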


\begin{proof}
For an integer $s\geq 1$, 
let $T_s$ be a long virtual knot presented by 
a diagram as shown in the top of Figure~\ref{ex-infinite-vdc}. 
Then its Gauss diagram is shown in the bottom of the figure, 
and has $2s+4$ positive chords $a_i$ 
$(i=1,2,\ldots,2s-1)$, 
$b_j$ $(j=1,2,3)$, and 
$c_k$ $(k=1,2)$. 
Let $K_s(m)$ be an oriented virtual knot as the closure of the product 
of $m$ copies of $T_s$.

\begin{figure}[htbp]
\centering
    \begin{overpic}[width=10cm]{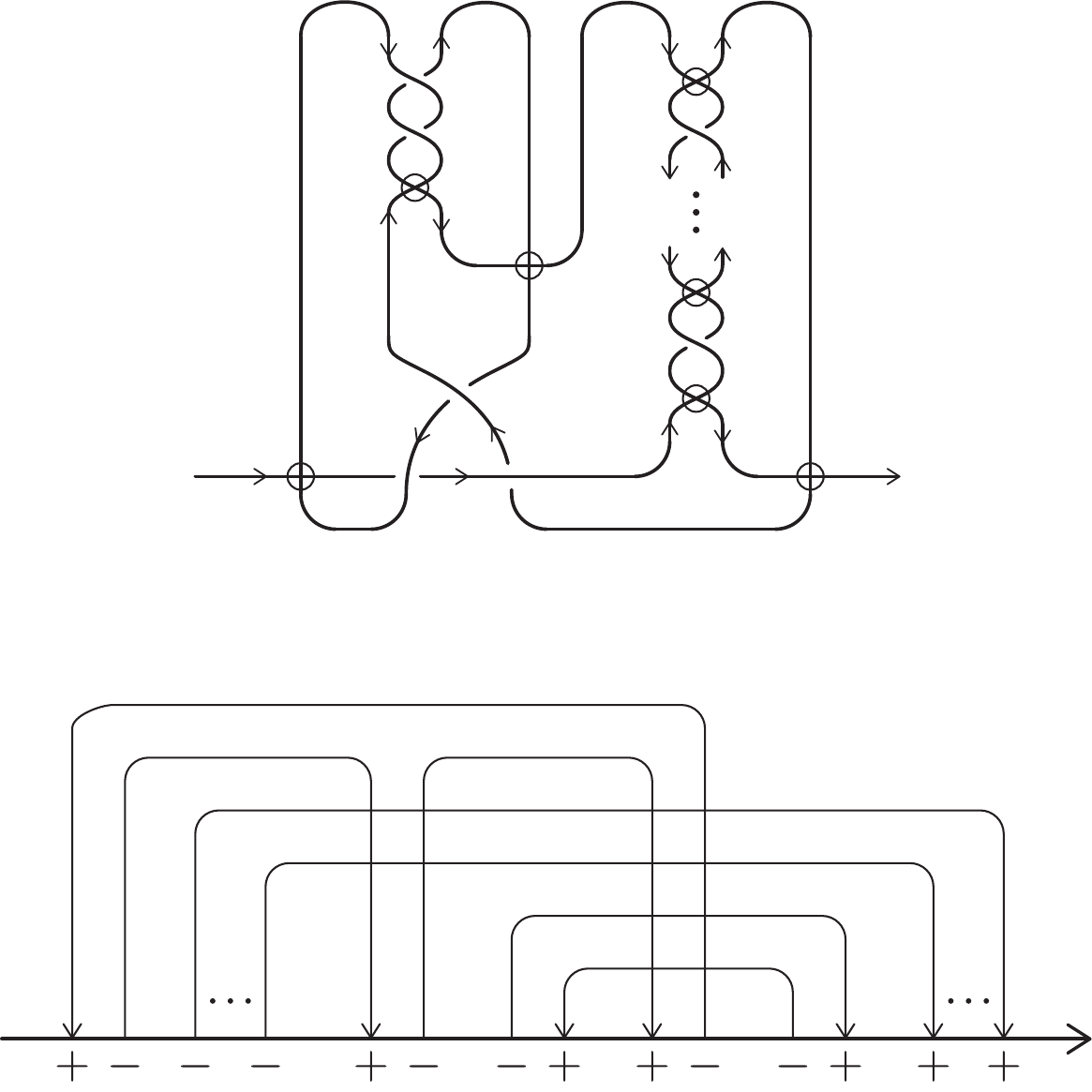}
      \put(96,166){$b_{1}$}
      \put(135.5,166){$b_{2}$}
      \put(115.5,190){$b_{3}$}
      \put(190,193){$a_{1}$}
      \put(189,248){{\small $a_{2s-1}$}}
      \put(117,247){$c_{1}$}
      \put(117,262){$c_{2}$}
      \put(100,105){$b_{1}$}
      \put(59,91){$b_{2}$}
      \put(135,91){$b_{3}$}
      \put(39,22){$a_{1}$}
      \put(71.5,22){$a_{2s-1}$}
      \put(122,22){$c_{1}$}
      \put(136.5,22){$c_{2}$}
    \end{overpic}
  \caption{A diagram of $T_{s}$ and its Gauss diagram}
  \label{ex-infinite-vdc}
\end{figure}

We can apply a $v\Delta^\circ$-move to $b_1$,  $b_2$, 
and $b_3$ so that $T_s$ becomes unknotted. 
Thus we have ${\rm u}_{v\Delta^\circ}(K_s(m))\leq m$.

On the other hand, since we have 
$${\rm Ind}(a_i)={\rm Ind}(c_ k)={\rm Ind}(b_3)=0,\ 
{\rm Ind}(b_1)=2s-1,\ \text{and } {\rm Ind}(b_2)=-2s+1,$$
it holds that 
$$
J_{n}(K_s(m))=
\begin{cases}
m & \text{if }n=2s-1,-2s+1, \\ 
0 & \text{otherwise}. 
\end{cases}$$ 
This induces $J(K_s(m))=m+m=2m$. 
By Propositions~\ref{prop-lower-odd}(i) and~\ref{prop-lower-writhe}(i), 
we have 
\[
\begin{split}
&{\rm u}_{v\Delta^\circ}(K_s(m))= \frac{1}{2}|J(K_s(m))|=m
\text{ and }\\
&{\rm u}_{v\Delta^\circ}(K_s(m))> \frac{1}{3}
\sum_{n\ne 0}|J_n(K_s(m))|=\dfrac{2}{3}m.
\end{split}\]

Furthermore for any $s> s'$, since 
$$J_{2s-1}(K_s(m))=m\neq 0=J_{2s-1}(K_{s'}(m))$$
holds, 
we have $K_s(m)\neq K_{s'}(m)$.
\end{proof}

Similarly to the case above, 
the oriented virtual knots $K=K_s(m)$ 
constructed in the proof of Theorem~\ref{thm-vp} satisfy 
\[{\rm u}_{vp}(K)> \frac{1}{2}|J(K)|
\text{ and }
{\rm u}_{vp}(K)= \frac{1}{4}
\sum_{n\ne 0}|J_n(K)|=m.\]
In fact, we have $J(K)=0$. 
Generally the two lower bounds for ${\rm u}_{vp}(K)$ 
given in Propositions~\ref{prop-lower-odd}(i) 
and \ref{prop-lower-writhe}(ii) are independent 
in the following sense. 

\begin{proposition}
For any positive integer $m$, 
there are infinitely many oriented virtual knots $K$ 
with 
\[{\rm u}_{vp}(K)= \frac{1}{2}|J(K)|=m
\text{ and }
{\rm u}_{vp}(K)> \frac{1}{4}
\sum_{n\ne 0}|J_n(K)|.\]
\end{proposition}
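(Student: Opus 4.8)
The plan is to argue exactly as in the proof of Theorem~\ref{thm-vp}, but to replace the long virtual knot used there by one whose closure has nonzero odd writhe while still having $n$-writhe concentrated on a single index.

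For an integer $s\ge 1$ I would construct a long virtual knot $T_s$ presented by a Gauss diagram built from a block of parallel positive ``winding'' chords, four chords $b_1,b_2,b_3,b_4$ forming a $vp$-gadget (so that a single $vp$-move applied to the corresponding four real crossings turns the long-knot diagram of $T_s$ into the trivial one), and a few auxiliary ``connector'' chords needed to close the picture. The length of the winding block should grow with $s$, and the diagram should be laid out so that, in the Gauss diagram of the closure $K_s(m)$ of the product of $m$ copies of $T_s$, every chord except two of the $b_j$'s has index $0$, while those two $b_j$'s are positive and have index $2s-1$ (the other two $b_j$'s being placed so as to have index $0$, whatever their signs). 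Computing each index as the signed count of endpoints on the corresponding arc, exactly as in Theorems~\ref{thm-vdc}, \ref{thm-vs}, and~\ref{thm-vp}, this would give
\[
J_n(K_s(m))=
\begin{cases}
2m & \text{if } n=2s-1,\\
0 & \text{otherwise,}
\end{cases}
\]
so that $J(K_s(m))=2m$ and $\sum_{n\ne 0}|J_n(K_s(m))|=2m$.

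With such a $T_s$ in hand, applying one $vp$-move per copy unknots $K_s(m)$, so ${\rm u}_{vp}(K_s(m))\le m$. Proposition~\ref{prop-lower-odd}(i) gives ${\rm u}_{vp}(K_s(m))\ge\tfrac12|J(K_s(m))|=m$, whence ${\rm u}_{vp}(K_s(m))=m=\tfrac12|J(K_s(m))|$. At the same time Proposition~\ref{prop-lower-writhe}(ii) yields only ${\rm u}_{vp}(K_s(m))\ge\tfrac14\sum_{n\ne 0}|J_n(K_s(m))|=\tfrac12 m$, so ${\rm u}_{vp}(K_s(m))=m>\tfrac14\sum_{n\ne 0}|J_n(K_s(m))|$, as required. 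Finally, for $s>s'\ge 1$ one has $J_{2s-1}(K_s(m))=2m\ne 0=J_{2s-1}(K_{s'}(m))$, so the $K_s(m)$ are pairwise distinct, and we obtain infinitely many such oriented virtual knots.

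The main obstacle is the explicit construction of the diagram $T_s$: it must simultaneously (a) be trivialized by one $vp$-move, which fixes the local configuration of $b_1,\dots,b_4$ and how the rest of the diagram attaches to it, and (b) have a winding long enough to force index $2s-1$ on the two positive $b$-crossings while keeping every other chord---the winding chords, the connectors, and the remaining two $b$-crossings---of index $0$. Balancing these two demands is the delicate point, but it is of the same nature as the constructions already carried out in Section~\ref{sec5}, and once the diagram is drawn the verification reduces to a routine endpoint count in the Gauss diagram.
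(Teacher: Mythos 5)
Your overall strategy is the same as the paper's (close up $m$ copies of a long virtual knot $T_s$ that is trivialized by one $vp$-move, get the upper bound ${\rm u}_{vp}\le m$ from that, the lower bound from Proposition~\ref{prop-lower-odd}(i), and distinguish the $K_s(m)$ by a single $n$-writhe). However, there is a fatal problem with the target you set for the construction, and the construction itself is never carried out.

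The $n$-writhe profile you aim for, namely $J_{2s-1}(K)=2m$ and $J_n(K)=0$ for all $n\ne 2s-1$, is not realizable by any virtual knot. Every virtual knot $K$ satisfies the relation $\sum_{n\ne 0} n\,J_n(K)=0$: writing ${\rm Ind}(\gamma)$ as a sum of contributions $\pm\e(\gamma')$ over the chords $\gamma'$ linked with $\gamma$, the two terms that a linked pair $\{\gamma,\gamma'\}$ contributes to $\sum_\gamma \e(\gamma)\,{\rm Ind}(\gamma)$ cancel, so this sum vanishes; this is also the realizability condition for $n$-writhe sequences in \cite{ST}. You can check it on every example in Section~\ref{sec5} (e.g.\ in the proof of Theorem~\ref{thm-vp}, $-2s+(2s+2)-(2s+4)+(2s+2)=0$). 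Your proposed profile gives $\sum_n nJ_n=2m(2s-1)\ne 0$, so no diagram $T_s$ with the properties you list can exist, and the ``main obstacle'' you defer to the end is in fact insurmountable rather than a routine drawing exercise. Even setting this aside, the argument is conditional: you never exhibit the diagram, whereas the whole content of the statement is the existence of such knots.

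Note that the desired inequality does not require $\sum_{n\ne 0}|J_n(K)|$ to equal $2m$; it only requires it to be strictly less than $4m$ while $J(K)=\pm 2m$. The paper's proof uses a concrete $T_s$ whose closure has $J_1=J_{2s-1}=m$ and $J_{2s}=-m$ (all other $J_n=0$), which satisfies $\sum_n nJ_n=m+(2s-1)m-2sm=0$, gives $J=2m$ since $2s$ is even, and yields $\frac14\sum_{n\ne 0}|J_n|=\frac34 m<m={\rm u}_{vp}$. If you want to salvage your plan, you must pick a realizable profile (for instance $J_{2s-1}=J_{-(2s-1)}=m$, which does satisfy the constraint) and then actually produce a diagram achieving it that is unknotted by a single $vp$-move.
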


\begin{proof}
For an integer $s\geq 2$, let $T_s$ be a long virtual knot presented by 
a diagram as shown in the top of Figure~\ref{ex-infinite-vp}. 
Then its Gauss diagram is shown in the bottom of the figure, 
and has $2s+5$ chords $a_i$ $(i=1,2,\ldots,2s)$, 
$b_j$ $(j=1,2,3,4)$, and 
$c$ with signs 
$$\e(a_i)=\e(b_1)=\e(b_3)=\e(c)=+1 \text{ and }
\e(b_2)=\e(b_4)=-1.$$
Let $K_s(m)$ be an oriented virtual knot as the closure of the product 
of $m$ copies of $T_s$.

\begin{figure}[htbp]
\centering
    \begin{overpic}[width=9cm]{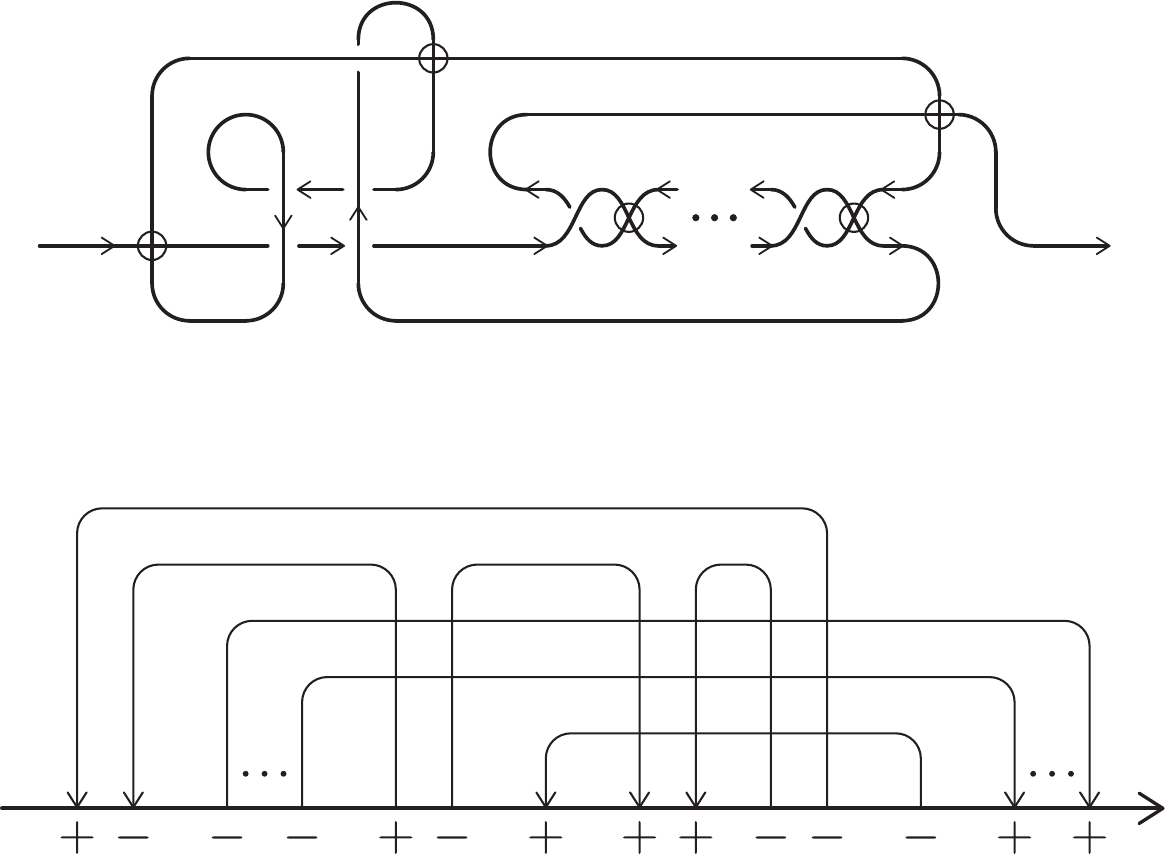}
      \put(51,127){$b_{1}$}
      \put(82,127){$b_{2}$}
      \put(82,153){$b_{3}$}
      \put(51,153){$b_{4}$}
      \put(71,181){$c$}
      \put(123,127){$a_{1}$}
      \put(171,127){$a_{2s}$}
      \put(95,83){$b_{1}$}
      \put(55,69.5){$b_{2}$}
      \put(116,69.5){$b_{3}$}
      \put(157.5,69.5){$b_{4}$}
      \put(38,18){$a_{1}$}
      \put(69,18){$a_{2s}$}
      \put(111,18){$c$}
    \end{overpic}
  \caption{A diagram of $T_{s}$ and its Gauss diagram}
  \label{ex-infinite-vp}
\end{figure}

We can apply a $vp$-move to $b_1$,  $b_2$, $b_3$, 
and $b_4$ so that $T_s$ becomes unknotted. 
Thus we have ${\rm u}_{vp}(K_s(m))\leq m$.

On the other hand, since we have 
$${\rm Ind}(a_i)={\rm Ind}(c)={\rm Ind}(b_4)=0,\ 
{\rm Ind}(b_1)=2s-1,\ {\rm Ind}(b_2)=2s, \text{and } {\rm Ind}(b_3)=1,$$
it holds that 
$$J_{n}(K_s(m))=
\begin{cases}
m & \text{if }n=1,2s-1,\\ 
-m & \text{if }n=2s,\\ 
0 & \text{otherwise}.
\end{cases}$$
This induces $J(K_s(m))=m+m=2m$. 
By Propositions~\ref{prop-lower-odd}(i) and \ref{prop-lower-writhe}(ii), 
we have 
\[
\begin{split}
&{\rm u}_{vp}(K_s(m))= \frac{1}{2}|J(K_s(m))|=m
\text{ and }\\
&{\rm u}_{vp}(K_s(m))> \frac{1}{4}
\sum_{n\ne 0}|J_n(K_s(m))|=\frac{3}{4}m.
\end{split}\]

Furthermore for any $s\neq s'$, since 
$$J_{2s}(K_s(m))=-m\neq 0=J_{2s}(K_{s'}(m))$$ holds, 
we have $K_s(m)\neq K_{s'}(m)$.
\end{proof}


\end{document}